\numberwithin{equation}{section}
\newtheorem{theorem}{Theorem}[section]
\newtheorem{proposition}[theorem]{Proposition}
\newtheorem{remark}[theorem]{Remark}
\def\neweq#1{\begin{equation}\label{#1}}
\def\endeq{\end{equation}}
\newcommand{\R}{\mathbb{R}}
\newcommand{\eps}{\epsilon}
\begin{document}


%
%

\title{On the inverse problem of detecting cardiac ischemias: \\ theoretical analysis and numerical reconstruction}


\author{Elena Beretta\thanks{Dipartimento di Matematica "F. Brioschi", Politecnico di Milano ({\tt elena.beretta@polimi.it})}
\and  Cecilia Cavaterra\thanks{Dipartimento di Matematica, Universit\`{a} degli Studi di Milano ({\tt cecilia.cavaterra@unimi.it})}
\and M.Cristina Cerutti\thanks{Dipartimento di Matematica "F. Brioschi", Politecnico di Milano" ({\tt cristina.cerutti@polimi.it})}
\and Andrea Manzoni\thanks{CMCS-MATHICSE-SB, Ecole Polytechnique F\'ed\'erale de Lausanne ({\tt andrea.manzoni@epfl.ch})}
\and Luca Ratti\thanks{Dipartimento di Matematica "F. Brioschi", Politecnico di Milano" ({\tt luca.ratti@polimi.it})}
}
 
\maketitle


\begin{abstract}
In this paper we develop theoretical analysis and numerical reconstruction techniques for the solution of an inverse boundary value problem dealing with the nonlinear, time-dependent monodomain equation, which models the evolution of the electric potential in the myocardial tissue.
The goal is the detection of  an inhomogeneity $\omega_\epsilon$ (where the coefficients of the equation are altered) located inside a domain $\Omega$  starting from observations of the potential on the boundary $\partial \Omega$. Such a problem is related to the detection of myocardial ischemic regions, characterized by severely reduced blood perfusion and consequent lack of electric conductivity.
In the first part of the paper we provide an asymptotic  formula for electric potential perturbations caused by internal conductivity inhomogeneities of low volume fraction, extending the results published in \cite{BCMP} to the case of three-dimensional, parabolic problems. In the second part we implement a reconstruction procedure based on the topological gradient of a suitable cost functional. Numerical results obtained on an idealized three-dimensional left ventricle geometry for different measurement settings assess the feasibility and robustness of the algorithm. 
\end{abstract}



\section{Introduction}

Mathematical and numerical models of computational electrophysiology can provide quantitative tools to describe electrical heart function and disfunction \cite{Quarteroni2016}, often complementing  imaging techniques (such as computed tomography and  magnetic resonance) for diagnostic and therapeutic purposes. In this context, detecting pathological conditions or reconstructing model features such as tissue conductivities from potential measurements yield to the solution of an inverse boundary value problem. Standard electrocardiographic techniques attempt to infer electrophysiological
processes in the heart from body surface measurements of the electrical potential, as in the case of electrocardiograms (ECGs), or body surface ECGs (also known as body potential maps). These measurements  can provide useful insights for the
reconstruction of the cardiac  electrical  activity  within the so-called  electrocardiographic imaging, by solving the well-known
{inverse problem of electrocardiography}\footnote{The {\em inverse problem of electrocardiography} aims at recovering the epicardial potential (that is, at the heart surface)
from body surface measurements \cite{MacLeod2010,ColliFranzone1978,Pavarino2014book}. Since the torso is considered as a passive conductor, such an inverse problem involves the linear steady diffusion model as direct problem. A step further, aiming at computing the potential inside the heart from the epicardial potential, has been considered, e.g., in \cite{Burger2010}.}. A much more invasive option to acquire potential measurements  is represented by non-contact electrodes inside a heart cavity to record endocardial potentials.

Here we focus on  the problem of detecting the position and the size of myocardial ischemias  from a single boundary measurement of the electric potential. Ischemia is a reversible precursor of heart infarction caused by partial occlusion of one or more coronary arteries, which supply blood to the heart. If this condition persists, myocardial cells die  and the ischemia eventually degenerates in infarction. For the time being, we consider an insulated heart model, neglecting the coupling with the torso; 
\textcolor{black}{this results in the inverse problem of detecting inhomogeneities for a nonlinear parabolic reaction-diffusion equation (in our case, the so-called monodomain equation) dealing with a single measurement of the  endocardial potential. Our long-term goal is indeed to deal with an inverse problem for the coupled heart-torso model, in order to detect ischemias from body surface measurements, such as those acquired on each patient with symptoms of cardiac disease through an ECG.
The problem we consider in this paper is a mathematical challenge itself, almost never considered before. Indeed, difficulties include the nonlinearity of both the direct and the inverse problem, as well as the lack of measurements at disposal. Indeed, even for the linear counterpart of the inverse problem, it has been shown in \cite{Isakov1} and \cite{Isakov2} that infinitely many measurements are needed to detect uniquely the unknown inclusions, and that the continuous dependence of the inclusion from the data is logarithmic \cite{DiCristoVessella2010}. Moreover, despite  the inverse problem of ischemia identification from measurements of surface potentials has been tackled in an
optimization framework for numerical purposes \cite{Nielsen2007,LN,Alvarez2012,Chavez2015}, a detailed mathematical analysis of this problem has never been performed.
To our knowledge, no theoretical investigation of inverse problems related with ischemia detection involving the monodomain and/or the bidomain model has been
carried out. On the other hand, recent results regarding both the analysis and the numerical approximation of this inverse problem in a much simpler stationary case
have been obtained in \cite{BCMP,BerettaManzoniRatti}.
In order to obtain rigorous theoretical results additional assumptions are needed, for instance by considering small-size conductivity inhomogeneities. We thus model ischemic regions as small inclusions  $\omega_{\eps}$  where the electric conductivity is significantly smaller than the one of healthy tissue and there is no ion transport.}

We establish a rigorous asymptotic expansion of the boundary potential perturbation due to the presence of the inclusion adapting to the parabolic nonlinear case  the approach introduced
by Capdeboscq and Vogelius in \cite{CV} for  the case of the linear conductivity equation.  The theory of detection of small conductivity inhomogeneities from boundary measurements via asymptotic techniques has been developed in the last three decades in the framework of Electric Impedence Tomography (see, e.g.,  \cite{AK,FV,CMV}). A similar approach has also been used in Thermal Imaging (see, e.g., \cite{Ammari2005}).
We use these results to set a reconstruction procedure for detecting the inclusion. To this aim, as in \cite{BerettaManzoniRatti},
we propose a reconstruction algorithm based on topological optimization, where a suitable quadratic functional is minimized to detect the position and the size of
the inclusion (see also \cite{art:cmm}).

Numerical results obtained on an idealized
left ventricle geometry 
assess the feasibility of the proposed procedure.
Several numerical test cases also show the robustness  of the reconstruction procedure with respect to measurement noise, unavoidable when dealing with real data. The modeling assumption on the small size of the inclusion, instrumental to the derivation of our theoretical results, is  verifed in practice in the case of residual ischemias after myocardial infarction. On the other hand, a fundamental task of ECG's imaging is  to detect the presence of  ischemias as precursor of heart infarction without any constraint on its size. For this reason, we also consider the case of the detection of larger size inclusions, for which the proposed algorithm still provides useful insights.

The paper is organized as follows. In Section 2 we describe the monodomain model of cardiac electrophysiology we are going to consider.
In Section 3 we show some suitable wellposedness results concerning the direct problems, in the unperturbed (background) and perturbed
cases. In Section 4 we prove useful energy estimates of the difference of the solutions of the two previous problems.
The asymptotic expansion formula is derived in Section 5 and the reconstruction algorithm in Section 6.
Numerical results are finally provided in Section 7. The appendix, Section 8, is devoted to a technical proof of a result needed in section 6.

\section{The monodomain model of cardiac electrophysiology}\label{sec:model}


The monodomain equation is a nonlinear parabolic reaction-diffusion PDE  for the transmembrane potential, providing a mathematical description of the macroscopic electric activity of the heart \cite{SLCNMT,Pavarino2014book}.
Throughout the paper we consider the following (background) initial and boundary value problem
\begin{equation}
\label{(P_0)} \ \
\begin{cases}
& \nu C_m u_t- {\rm div} (k_0\nabla u) + \nu f(u)  = 0,\ \ \ {\rm in} \ \ \Omega\times (0,T),\\
& \displaystyle\frac{\partial u}{\partial n}  = 0, \ \ \  {\rm on } \ \ \partial\Omega \times (0,T), \\
& u(0) = u_0, \ \ \ {\rm in} \ \ \Omega,
\hskip2truecm
\end{cases}
\end{equation}
where $\Omega \subset  {\bf R}^3$ is a bounded set with boundary $\partial\Omega$, and
$k_0 \in \mathbb{R}, k_0 >0$. Here $\Omega$ is the domain occupied by the ventricle, $u$ is the (transmembrane) electric potential, $f(u)$ is a nonlinear term modeling the ionic current flows across the membrane of cardiac cells,
$k_0$ is the conductivity tensor of the healthy tissue, $C_m >0$  and $\nu >0$ are two constant coefficients representing the membrane capacitance and the surface area-to-volume ratio, respectively.
For the sake of simplicity we deal with an insulated heart, namely we do not consider the effect of the surrounding torso, which behaves as a passive conductor.
The initial datum $u_0$ represents the initial activation of the tissue, arising from the propagation of the electrical impulse in the cardiac conduction system. This equation yields a macroscopic model of the cardiac
tissue, arising from the superposition of intra and extra cellular media, both assumed to occupy the whole heart volume (bidomain model), making the hypothesis that the extracellular and the intracellular conductivities are proportional quantities. Concerning the mathematical analysis of both the monodomain and the bidomain models, some results on the related direct problems have been obtained for instance in \cite{Bendahmane2006,Gerbeau2008,Bourgault2009,Pavarino2014book}.

We thus  assume a phenomenological model to describe  the effect of ionic currents through a nonlinear function of the potential.
We  neglect the coupling with the ODE system modeling the evolution of the so-called {\em gating variables}, which represent the amount of open channels per unit area of the cellular membrane and thus regulate the transmembrane
currents.


{In the case of a single gating variable $w$, a well-known option would be to replace  $f$ by $g =g(u,w)$ where
\[
g(u, w) = - \beta u(u-\alpha)(u-1) - w,
\]
and $w$ solves the following ODE initial value problem, $\forall \, x \in \Omega$,
\[
\frac{\partial w}{\partial t} = \rho(u- \gamma w) \ \  \mbox{in} \   (0,T) \, , \qquad {w}(0) = {w}_0,
\]
for suitable (constant) parameters $\beta$, $\alpha$, $\rho$, $\gamma$. This is the so-called  FitzHugh-Nagumo model for the ionic current, and the  gating variable $w$ is indeed a  recovery function allowing to take into
account the depolarization phase. See, e.g., \cite{Pavarino2014book} for more details. In our case, the model \eqref{(P_0)} is indeed widely used to characterize the large-scale propagation of the
front-like solution  in the cardiac excitable medium.}

%



As suggested in \cite[Sect. 4.2]{Pavarino2014book} and \cite[Sect. 2.2]{SLCNMT}, hereon we consider the cubic function
\begin{equation}
f(u) = A^2(u-u_1)(u-u_2)(u-u_3),  \quad u_i \in \mathbb{R}, \quad  u_1 < u_2 < u_3,
\label{H2}
\end{equation}
 where   $A>0$ is a parameter determining the rate of change of $u$ in the depolarization phase, and $u_{1} <  u_{2} < u_{3}$ are given constant values representing the resting, threshold and  peak potentials,
 respectively.  Possible values of the parameters are, e.g., $u_1 = -85 mV$, $u_{2} = - 65 mV$ and $u_3 = 40 mV$, $A=0.04$, see \cite{SLCNMT}.
 Note that both the sharpness of the wavefront and its propagation speed  strongly depend on the value of the parameter $A$.

Consider now a small inhomogeneity located in a measurable bounded domain $\omega_\varepsilon \subset \Omega$, such that there exist a
compact set $K_0$, with $\omega_\varepsilon \subset K_0 \subset\Omega$, and a constant $d_0 >0$ satisfying
\begin{equation}
{\rm dist}(\omega_\varepsilon, \Omega \backslash K_0) \geq d_0 >0.
\label{dist}
\end{equation}
Moreover, we assume
\begin{equation}
|\omega_\varepsilon| >0, \, \, \quad\quad \lim_{\varepsilon \to 0} |\omega_\varepsilon| = 0.
\label{smalldim}
\end{equation}
In the inhomogeneity $\omega_\varepsilon$ the conductivity coefficient and the nonlinearity take different values with respect the ones in
$\Omega \backslash \omega_\varepsilon$. The problem we consider is therefore
\begin{equation}
\label{perturbed} \ \
\begin{cases}
& \nu C_m u^\varepsilon_t- {\rm div} (k_\varepsilon\nabla u^\varepsilon) + \nu \chi_{\Omega \backslash \omega_\varepsilon} f(u^\varepsilon) = 0,
\ \ \ {\rm in} \ \ \Omega\times (0,T),\\
& \displaystyle {\partial u^\varepsilon \over \partial n}  = 0, \ \ \  {\rm on } \ \ \partial\Omega \times (0,T), \\
& u^\varepsilon(0) = u_0, \ \ \ {\rm in} \ \ \Omega,
\end{cases}
\hskip2truecm
\end{equation}
where $\chi_{D}$ stands for the characteristic function of a set $D \subset\mathbb{R}^3$. Here
\begin{equation}\label{eq:2_6}
k_\varepsilon = (k_0 - k_1) \chi_{\Omega \backslash \omega_\varepsilon} + k_1 =
\begin{cases}
&k_0 \quad \quad {\rm in} \ \  \Omega \backslash \omega_{\varepsilon}, \\
&k_1 \quad \quad {\rm in} \ \ \omega_\varepsilon, \\
\end{cases}
\end{equation}
with $k_0, k_1 \in \mathbb{R}, \, k_0 > k_1 >0$.

\section{Well posedness of the direct problem}

Problem \eqref{(P_0)}  thus describes the propagation of the initial activation  $u_0$ in an insulated heart portion (e.g., the left ventricle), and hereon will be referred to as the {\em background problem};
we devote Section  \ref{sec:3_1} to the analysis of its well-posedness.  The well-posedness of the perturbed problem modeling the presence of a small ischemic region in the domain will be instead analyzed in Section \ref{sec:3_2}.

\subsection{Well posedness of the background problem}\label{sec:3_1}

\setcounter{equation}{0}

%
For the sake of simplicity, throughout the paper we set
  $\nu = C_m = 1$ and we assume that
 \begin{equation}\label{eq:3_0}
 \Omega \in C^{2+\alpha}, \ \alpha \in (0,1),
 \end{equation}
\begin{equation}
u_0 \in C^{2+\alpha}(\overline\Omega),
\quad \quad u_1< u_0(x) <u_3 \quad \forall \, x \in \Omega, \quad \quad {\partial u_0 (x)\over \partial n} = 0 \quad  \forall \, x \in \partial\Omega.
\label{H3}
\end{equation}
Moreover, let us set
\begin{equation}
M_1:=\|f\|_{C([u_1,u_3])}, \quad \quad M_2 := \|f^\prime\|_{C([u_1,u_3])}.
\label{M1}
\end{equation}
The following well posedness result holds.
\begin{theorem}\label{regu}
Assume \eqref{H2}, \eqref{eq:3_0}, \eqref{H3}. Then problem \eqref{(P_0)} admits a unique solution $u\in C^{2+ \alpha, 1 + \alpha/2}(\overline\Omega \times [0,T])$
such that
\begin{equation}\label{boundu}
u_1 \leq u(x,t) \leq u_3, \quad (x,t) \in \overline\Omega \times [0,T],
\end{equation}
\begin{equation}\label{bounduc2}
\|u \|_{C^{2+ \alpha, 1 + \alpha/2}(\overline\Omega \times [0,T])}
\leq C,
\end{equation}
where  $C$ is a positive constant depending (at most) on $k_0, T, \Omega, M_1, M_2,\|u_0\|_{C^{2 + \alpha}(\overline\Omega)}$.
\end{theorem}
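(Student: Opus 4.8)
The plan is to combine a fixed-point argument with the parabolic maximum principle and the classical Schauder theory for linear parabolic equations. Since $k_0$ is a positive constant, the principal part is simply $k_0\Delta$, so the H\"older theory for the heat operator applies directly. The first device I would introduce is a truncation of the nonlinearity: replace $f$ by a function $\tilde f\in C^\infty(\mathbb{R})$ that coincides with $f$ on $[u_1,u_3]$ and is bounded together with its derivatives outside it. This makes the reaction term globally bounded and globally Lipschitz, removing any a priori risk of finite-time blow-up and decoupling the existence argument from the pointwise bounds.

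For the tamed problem I would set up the solution map $\Phi:v\mapsto u$, where $u$ solves the \emph{linear} problem $u_t-k_0\Delta u=-\tilde f(v)$ with the Neumann condition and initial datum $u_0$. The compatibility condition $\partial u_0/\partial n=0$ supplied by \eqref{H3}, together with $u_0\in C^{2+\alpha}(\overline\Omega)$ and $\Omega\in C^{2+\alpha}$ from \eqref{eq:3_0}, lets me invoke the linear Schauder estimates (Ladyzhenskaya--Solonnikov--Ural'tseva): whenever the right-hand side lies in $C^{\alpha,\alpha/2}$, the solution lies in $C^{2+\alpha,1+\alpha/2}$. Restricting $\Phi$ to a ball in $C^{\alpha,\alpha/2}(\overline\Omega\times[0,\tau])$ and using the global Lipschitz bound on $\tilde f$, $\Phi$ becomes a contraction for $\tau$ small, yielding a unique local solution; because $\tilde f$ is bounded \emph{independently of the solution}, the local existence time is uniform, so the solution extends to all of $[0,T]$.

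Next I would recover the pointwise bounds \eqref{boundu} and identify the tamed solution with a genuine one. The cubic structure \eqref{H2} gives $f(u_1)=f(u_3)=0$, hence $\tilde f(u_1)=\tilde f(u_3)=0$, so the constants $u_1$ and $u_3$ are themselves stationary solutions of \eqref{(P_0)} compatible with the Neumann condition; since $u_1<u_0<u_3$ by \eqref{H3}, the parabolic comparison principle (valid because $\tilde f$ is Lipschitz) yields $u_1\le u\le u_3$ on $\overline\Omega\times[0,T]$. On this range $\tilde f\equiv f$, so $u$ actually solves \eqref{(P_0)}. The full regularity and the estimate \eqref{bounduc2} then follow by a bootstrap: $u\in C^{\alpha,\alpha/2}$ combined with the smoothness of $f$ gives $f(u)\in C^{\alpha,\alpha/2}$, whose norm is controlled through $M_1$ and $M_2$ by the pointwise bound and the H\"older seminorm of $u$; a final Schauder estimate applied to $u_t-k_0\Delta u=-f(u)$ delivers the bound on $\|u\|_{C^{2+\alpha,1+\alpha/2}}$ in terms of $k_0,T,\Omega,M_1,M_2,\|u_0\|_{C^{2+\alpha}(\overline\Omega)}$.

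Uniqueness I would establish directly for \eqref{(P_0)}: given two solutions $u,v$ with identical data, the difference $w=u-v$ satisfies $w_t-k_0\Delta w=-(f(u)-f(v))$ with homogeneous Neumann and zero initial conditions; testing with $w$, using $|f(u)-f(v)|\le M_2|w|$ on $[u_1,u_3]$, and applying Gr\"onwall's inequality forces $w\equiv0$. The main obstacle is precisely the circularity between the two roles played by the $L^\infty$ bound: it is needed both to control the cubic nonlinearity (hence to globalize the fixed point and to make $f$ Lipschitz for the energy estimate) and is itself produced by the comparison principle, which presupposes a solution. The truncation is what breaks this circle, allowing one to first construct a global solution of the tamed problem and only afterward confine it to $[u_1,u_3]$ via the maximum principle.
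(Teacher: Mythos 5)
Your proposal is correct, but it reaches the result by a genuinely different construction than the paper. The paper's proof is essentially by citation: existence together with the invariant-region bound \eqref{boundu} is obtained from Pao's method of upper and lower solutions \cite[Def. 3.1 and Thm. 4.1]{P}, the decisive structural fact being the one you also exploit, namely that the constants $u_1$ and $u_3$ are a lower and an upper solution of \eqref{(P_0)} because $f(u_1)=f(u_3)=0$ and $u_1<u_0<u_3$; the Schauder bound \eqref{bounduc2} is then quoted from Lunardi's optimal regularity theory \cite[Thm. 5.1.17 (ii) and Thm. 5.1.20]{L}. Thus the paper manufactures the solution already inside $[u_1,u_3]$ by monotone iteration between the constant barriers, getting existence and \eqref{boundu} in one stroke, whereas you first build a global solution of a truncated problem by contraction plus linear Schauder estimates and only afterwards confine it to $[u_1,u_3]$ by the comparison principle: your truncation plays exactly the role that Pao's monotone scheme plays in the paper, i.e. it breaks the circularity between the $L^\infty$ bound and the control of the cubic term. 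What your route buys is self-containedness (Banach fixed point, maximum principle, classical parabolic estimates as in \cite{LSU}, no black-box nonlinear existence theorem); what the paper's route buys is brevity, and the fact that the iteration never leaves the region where $f$ is the true nonlinearity, so no truncation/identification step is needed. One point you should make explicit in your final bootstrap: the H\"older seminorm of $u$ entering your estimate of $\|f(u)\|_{C^{\alpha,\alpha/2}}$ must itself be bounded in terms of the admissible quantities $k_0, T, \Omega, M_1, M_2, \|u_0\|_{C^{2+\alpha}(\overline\Omega)}$ --- for instance by first applying linear parabolic estimates to $u_t-k_0\Delta u=-f(u)$, viewing $f(u)$ as a right-hand side bounded by $M_1$ --- since if you instead carry over the bound produced by the fixed-point construction, the constant in \eqref{bounduc2} would a priori inherit a dependence on the chosen extension $\tilde f$.
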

\begin{proof}
We omit the details of the proof since \eqref{boundu} can be easily obtained using the results in \cite[def. 3.1 and Thm. 4.1]{P}
and \eqref{bounduc2} by means of \cite[Thm. 5.1.17 (ii) and Thm. 5.1.20]{L}.



\end{proof}

\subsection{Well posedness of the perturbed problem}\label{sec:3_2}

The well-posedness of the perturbed problem \eqref{perturbed} is provided by the following theorem.
\begin{theorem}\label{regueps}
Assume \eqref{H2}, \eqref{eq:2_6}, \eqref{eq:3_0}, \eqref{H3}. Then problem \eqref{perturbed} admits a unique solution $u^\varepsilon$ such that
\begin{equation}
u^\varepsilon \in L^2(0,T; H^1(\Omega)) \cap C([0,T]; L^2(\Omega)), \quad u_t^\varepsilon \in L^2(0,T; (H^1(\Omega))^\prime) + L^{4/3}(\Omega \times (0,T)).
\label{regueps1}
\end{equation}
Moreover, $u^\varepsilon \in C^{\alpha, \alpha/2}(\overline\Omega \times [0,T])$ and the following estimate holds
\begin{equation}
\|u^\varepsilon\|_{C^{\alpha, \alpha/2}(\overline\Omega \times [0,T])}
\leq C,
\label{regueps2}
\end{equation}
where $C$ is a positive constant depending (at most) on $k_0, k_1, T, \Omega, \|u_0\|_{C^\alpha(\overline\Omega)}$ and $M_1$.
\end{theorem}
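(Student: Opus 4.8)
The plan is to solve \eqref{perturbed} in the weak sense: to find $u^\varepsilon$ in the class described by \eqref{regueps1} such that, for a.e. $t\in(0,T)$ and every $v\in H^1(\Omega)$,
\begin{equation*}
\langle u^\varepsilon_t,v\rangle+\int_\Omega k_\varepsilon\nabla u^\varepsilon\cdot\nabla v\,dx+\int_\Omega\chi_{\Omega\backslash\omega_\varepsilon}f(u^\varepsilon)\,v\,dx=0,\qquad u^\varepsilon(0)=u_0 .
\end{equation*}
First I would construct a solution by a Faedo--Galerkin scheme built on the eigenfunctions of the Neumann Laplacian. Testing the finite-dimensional problem with the approximate solution itself, and using that the cubic \eqref{H2} satisfies $f(s)s\ge c_1 s^4-c_2$ for suitable $c_1,c_2>0$ together with the uniform ellipticity $k_1|\xi|^2\le k_\varepsilon|\xi|^2$ from \eqref{eq:2_6}, yields the a priori bound
\begin{equation*}
\sup_{t\in[0,T]}\|u^\varepsilon(t)\|_{L^2(\Omega)}^2+\int_0^T\!\left(k_1\|\nabla u^\varepsilon\|_{L^2(\Omega)}^2+c_1\|u^\varepsilon\|_{L^4(\Omega\backslash\omega_\varepsilon)}^4\right)dt\le C,
\end{equation*}
with $C$ independent of the Galerkin index (and of $\varepsilon$). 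Since the reaction term is supported in $\Omega\backslash\omega_\varepsilon$, where $u^\varepsilon\in L^4$, one has $\chi_{\Omega\backslash\omega_\varepsilon}f(u^\varepsilon)\in L^{4/3}(\Omega\times(0,T))$, and as $k_\varepsilon\nabla u^\varepsilon\in L^2(0,T;L^2(\Omega))$ gives $\div(k_\varepsilon\nabla u^\varepsilon)\in L^2(0,T;(H^1(\Omega))')$, the equation itself furnishes the bound on $u^\varepsilon_t$ in \eqref{regueps1}. Weak/weak-$*$ compactness and a Lipschitz/monotonicity argument for the nonlinear term let me pass to the limit, while the standard embedding of the energy class into $C([0,T];L^2(\Omega))$ gives continuity in time and attainment of $u_0$. (Equivalently, one may first truncate $f$ outside $[u_1,u_3]$ to a bounded globally Lipschitz $\tilde f$, solve by a contraction argument, and remove the truncation a posteriori using the $L^\infty$ bound of the next step.)

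Second I would establish the invariant region $u_1\le u^\varepsilon\le u_3$, which explains why $M_1$ and $M_2$ are the relevant constants. Testing with $(u^\varepsilon-u_3)^+$ gives
\begin{equation*}
\frac12\frac{d}{dt}\|(u^\varepsilon-u_3)^+\|_{L^2(\Omega)}^2+\int_\Omega k_\varepsilon|\nabla(u^\varepsilon-u_3)^+|^2\,dx+\int_\Omega\chi_{\Omega\backslash\omega_\varepsilon}f(u^\varepsilon)(u^\varepsilon-u_3)^+\,dx=0,
\end{equation*}
and since $f\ge0$ on $\{s>u_3\}$ by \eqref{H2} (the reaction being merely absent, not adverse, inside $\omega_\varepsilon$), the last two terms are nonnegative; as $(u_0-u_3)^+=0$ by \eqref{H3}, the first term is nonincreasing and vanishes at $t=0$, so $(u^\varepsilon-u_3)^+\equiv0$. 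The symmetric test with $(u_1-u^\varepsilon)^+$, using $f\le0$ on $\{s<u_1\}$, gives the lower bound. With any solution confined to $[u_1,u_3]$, uniqueness follows classically: for two solutions $u^\varepsilon,\hat u^\varepsilon$ the difference $w=u^\varepsilon-\hat u^\varepsilon$ satisfies, after testing with $w$,
\begin{equation*}
\frac12\frac{d}{dt}\|w\|_{L^2(\Omega)}^2+k_1\|\nabla w\|_{L^2(\Omega)}^2\le\int_\Omega|f(u^\varepsilon)-f(\hat u^\varepsilon)|\,|w|\,dx\le M_2\|w\|_{L^2(\Omega)}^2,
\end{equation*}
because $f$ is Lipschitz with constant $M_2$ on $[u_1,u_3]$, and Gronwall together with $w(0)=0$ closes the estimate.

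Third, and here lies the real content, I would prove the uniform Hölder bound \eqref{regueps2}. Once the $L^\infty$ bound is known I rewrite \eqref{perturbed} as the \emph{linear} divergence-form parabolic equation
\begin{equation*}
u^\varepsilon_t-\div(k_\varepsilon\nabla u^\varepsilon)=g^\varepsilon,\qquad g^\varepsilon:=-\chi_{\Omega\backslash\omega_\varepsilon}f(u^\varepsilon),\qquad \|g^\varepsilon\|_{L^\infty(\Omega\times(0,T))}\le M_1,
\end{equation*}
whose coefficient $k_\varepsilon$ is merely bounded and measurable but \emph{uniformly} elliptic, $k_1|\xi|^2\le k_\varepsilon|\xi|^2\le k_0|\xi|^2$, with bounds independent of $\varepsilon$. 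I then invoke the De Giorgi--Nash--Moser theory for divergence-form parabolic equations (e.g.\ Ladyzhenskaya--Solonnikov--Ural'tseva, Ch.~III, or Lieberman): the interior, lateral-boundary (adapted to the homogeneous Neumann condition on $\partial\Omega\in C^{2+\alpha}$) and initial Hölder estimates give $u^\varepsilon\in C^{\bar\alpha,\bar\alpha/2}(\overline\Omega\times[0,T])$ together with \eqref{regueps2}. The decisive point is that the De Giorgi--Nash--Moser exponent $\bar\alpha$ and the constant in \eqref{regueps2} depend only on the ellipticity ratio $k_0/k_1$, the dimension, $T$, $\Omega$, $\|g^\varepsilon\|_\infty\le M_1$, the range $[u_1,u_3]$ of $u^\varepsilon$ and $\|u_0\|_{C^\alpha(\overline\Omega)}$ --- and \emph{not} on any modulus of continuity of $k_\varepsilon$ nor on the geometry of $\omega_\varepsilon$. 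Relabelling $\alpha:=\min(\alpha,\bar\alpha)$ (admissible, since $\Omega\in C^{2+\alpha}$ and $u_0\in C^{2+\alpha}(\overline\Omega)$ persist for any smaller exponent) yields the statement.

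The main obstacle is precisely this uniform-in-$\varepsilon$ estimate. A Schauder argument is useless here, since its constants involve the modulus of continuity of the coefficients and would blow up as $\omega_\varepsilon$ shrinks while the jump of $k_\varepsilon$ across $\partial\omega_\varepsilon$ persists; only the De Giorgi--Nash--Moser machinery, insensitive to coefficient regularity, delivers a bound uniform in $\varepsilon$ --- uniformity that is indispensable for the asymptotic expansion derived later. Two secondary technical points deserve care: the justification of the chain rule $\frac{d}{dt}\tfrac12\|(u^\varepsilon-u_3)^+\|_{L^2}^2=\langle u^\varepsilon_t,(u^\varepsilon-u_3)^+\rangle$ when $u^\varepsilon_t$ lies only in $L^2(0,T;(H^1)')+L^{4/3}$ (via the Lions--Magenes regularization lemma, observing that $(u^\varepsilon-u_3)^+\in L^2(0,T;H^1)\cap L^4$ pairs with both summands), and the Hölder estimate up to the corner set $\partial\Omega\times\{0\}$, which relies on the compatibility $\partial u_0/\partial n=0$ from \eqref{H3}.
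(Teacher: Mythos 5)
Your proposal is correct and shares the paper's overall skeleton: Faedo--Galerkin existence of a weak solution, an $L^\infty$ bound $u_1 \le u^\varepsilon \le u_3$, and then H\"older regularity from De Giorgi--Nash--Moser-type estimates for divergence-form parabolic equations with merely bounded, measurable, uniformly elliptic coefficients (for this last step the paper cites exactly such results, \cite[Thms.~2.1 and 4.1]{D} together with \cite{LSU}; your invocation of LSU/Lieberman is the same machinery, and your remark that the constants are insensitive to the modulus of continuity of $k_\varepsilon$ is precisely the point). Where you genuinely diverge is the $L^\infty$ bound. You obtain it by a direct Stampacchia truncation argument, testing the weak formulation with $(u^\varepsilon-u_3)^+$ and $(u_1-u^\varepsilon)^+$ and exploiting the sign of $f$ outside $[u_1,u_3]$. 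The paper instead regularizes the discontinuous data: it approximates $\chi_{\Omega\setminus\omega_\varepsilon}$ in $L^\infty$ by functions $\phi_n \in C^1(\overline\Omega)$, solves the smooth problems \eqref{(P_n)}, applies the classical invariant-region/maximum principle (as in Theorem \ref{regu}, via Pao's results) to get $u_1\le u^n\le u_3$, derives $n$-independent energy bounds, passes to the limit by Aubin--Lions compactness, and identifies the limit with $u^\varepsilon$ through uniqueness of the weak solution, thereby transferring the pointwise bounds. Your route is shorter and avoids the approximation/compactness/identification layer, but it carries the burden you correctly flag: justifying the chain rule $\langle u_t^\varepsilon,(u^\varepsilon-u_3)^+\rangle = \frac12\frac{d}{dt}\|(u^\varepsilon-u_3)^+\|_{L^2(\Omega)}^2$ when $u_t^\varepsilon$ lies only in $L^2(0,T;(H^1(\Omega))')+L^{4/3}(\Omega\times(0,T))$ --- exactly the low-regularity difficulty the paper sidesteps by working with smooth problems where classical pointwise arguments apply. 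A minor structural difference: the paper establishes uniqueness before, and independently of, the $L^\infty$ bound (using coercivity of $a_\varepsilon$ via the Poincar\'e-type inequality \eqref{poincare}), whereas you prove it afterwards using the Lipschitz constant $M_2$ on $[u_1,u_3]$; both orders are sound, since your truncation argument applies to every weak solution.
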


\begin{proof}
Throughout the proof $C$ will be as in the statement of the Theorem.

Recalling the definition of $f$, there exist $k \geq 0, \, \alpha_1 >0,\, \alpha_2 >0, \, \lambda >0$ such that
$$
\alpha_1u^4 - k \leq f(u)u \leq \alpha_2u^4 + k, \quad \quad f^\prime(u) \geq - \lambda.
$$
We formulate problem \eqref{perturbed}  in the weak form
\begin{equation}
\int_\Omega u^\varepsilon_t v + \int_\Omega k_\varepsilon \nabla u^\varepsilon \cdot \nabla v
+ \int_{\Omega} \chi_{\Omega \backslash\omega_\varepsilon} f(u^\varepsilon)v = 0 , \quad \forall \, v \in H^1(\Omega).
\label{wepsilon}
\end{equation}
Setting $\tilde f(u) = f(u) - u$, \eqref{wepsilon} becomes
\begin{equation}
\int_\Omega u^\varepsilon_t v + \int_\Omega k_\varepsilon \nabla u^\varepsilon \cdot \nabla v
+ \int_{\Omega} \chi_{\Omega \backslash\omega_\varepsilon} u^\varepsilon v
+ \int_{\Omega} \chi_{\Omega \backslash\omega_\varepsilon} \widetilde f(u^\varepsilon)v = 0 , \quad \forall \, v \in H^1(\Omega).
\label{wepsilon2}
\end{equation}
Observe that, thanks to following the Poincar\'{e} type inequality in \cite[formula (A.4)]{BCMP}
\begin{equation}\label{poincare}
\|z\|^2_{H^1(\Omega)} \leq S(\Omega) \left (\|\nabla z\|^2_{L^2(\Omega)}
+ \|z\|^2_{L^2(\Omega\backslash\omega_\varepsilon)} \right), \quad \forall \, z \in H^1(\Omega),
\end{equation}
the bilinear form
$a_\varepsilon(u^\varepsilon,v) = \left ( \int_\Omega k_\varepsilon \nabla u^\varepsilon \cdot \nabla v
+ \int_{\Omega \backslash\omega_\varepsilon} u^\varepsilon v\right )$ is coercive.
Indeed
\begin{equation}
a_\varepsilon(u^\varepsilon,u^\varepsilon) =  \int_\Omega k_\varepsilon |\nabla u^\varepsilon|^2 + \int_{\Omega \backslash\omega_\varepsilon} (u^\varepsilon)^2
\geq S\|u^\varepsilon\|^2_{H^1(\Omega)},
\label{aepscoer}
\end{equation}
where $S$ is a positive constant depending on $\Omega$ and $k_1$.

Through the classical Faedo-Galerkin approximation scheme it is possible to prove that problem \eqref{perturbed} admits a unique weak solution $u^\varepsilon$
satisfying \eqref{regueps1}.

In order to obtain further regularity for $u^\varepsilon$, let  $\{\phi_n\}$ be a sequence such that
\begin{equation}\nonumber
\phi_n \in C^1(\overline\Omega), \;0 \leq \phi_n(x) \leq 1, \; \forall \, x \in \overline\Omega, \;
\phi_n(x) = 1, \, \, \forall \, x \in \overline\Omega \backslash \omega_\varepsilon,\;{\rm and}\;
\phi_n \to \chi_{\Omega \setminus \omega_\varepsilon} \; {\rm in} \; L^\infty(\Omega),
\end{equation}
and formulate the approximating problems
\begin{equation}
\label{(P_n)} \ \
\begin{cases}
& u^n_t- {\rm div} (((k_0 - k_1)\phi_n + k_1)\nabla u^n) + \phi_n f(u^n) = 0,
\ \ \ {\rm in} \ \ \Omega\times (0,T),\\
& \displaystyle {\partial u^n \over \partial n}  = 0, \ \ \  {\rm on } \ \ \partial\Omega \times (0,T), \\
& u^n(0) = u_0, \ \ \ {\rm in} \ \ \Omega.
\end{cases}
\end{equation}
Using the same arguments as in the proof of Theorem \ref{regu}, we can prove that, $\forall \, n \in \mathbb{N}$, problem \eqref{(P_n)}
admits a unique solution $u^n$ such that
$$
u^n \in C(\overline\Omega \times [0,T]), \quad u_1 \leq u^n(x,t) \leq u_3, \quad (x,t) \in \overline\Omega \times [0,T].
$$
Moreover, by means again of a standard Faedo-Galerkin approximation scheme (for any $n$) we can prove that the solution to problem \eqref{(P_n)}
satisfies
$$
u^n \in L^2(0,T; H^1(\Omega)) \cap C([0,T]; L^2(\Omega)),
\quad u_t^n \in L^2(0,T; (H^1(\Omega))^\prime) + L^{4/3}(\Omega \times (0,T)),
$$
$$
\|u^n\|^2_{L^\infty(0,T; L^2(\Omega))} \leq C, \quad \quad
\|u^n\|^2_{L^2(0,T; H^1(\Omega))} \leq C,
$$
$$
\|u_t^n|^2_{L^{4/3}(0,T; (H^1(\Omega))^\prime)} \leq C, \quad\quad
\|\phi_n\widetilde f(u^n)\|^2_{L^{4/3}(\Omega \times (0,T))} \leq C,
$$
where $C$ are some positive constants independent of $n$.

An application of \cite[Thm. 8.1]{R} implies that, up to a subsequence,
$u^n \to \zeta$ strongly in $L^2(\Omega \times (0,T))$,
so that
$u^n \to \zeta$ a.e. in $\Omega \times (0,T)$ and
$\phi_n\widetilde f(u^n) \rightharpoonup \chi_{\omega_\varepsilon} \widetilde f(\zeta)\,\, {\rm in}\,\, L^{4/3}(\Omega \times (0,T))$.
Since problem \eqref{perturbed} has a unique solution (cf. \eqref{wepsilon}), we conclude that $\zeta=u^\varepsilon$
and satisfies
\begin{align}\nonumber
\|u^\varepsilon\|^2_{L^\infty(0,T; L^2(\Omega))} \leq C, \quad &\|u^\varepsilon\|^2_{L^2(0,T; H^1(\Omega))} \leq C,
\quad \|u_t^\varepsilon\|^2_{L^{4/3}(0,T; (H^1(\Omega))^\prime)} \leq C,\\
\label{boundueps}
&u_1 \leq u^\varepsilon (x,t)\leq u_3,\quad {\rm in}\, \, \overline\Omega \times [0,T].
\end{align}
Considering now the interior regularity result in \cite[Theorem 2.1]{D} (see also \cite{LSU})
and the regularity up to the boundary contained in \cite[Theorem 4.1]{D}, then we deduce \eqref{regueps2}.
\end{proof}

\section{Energy estimates for $\mathbf{u^\varepsilon - u}$ }

\setcounter{equation}{0}

In this section we prove some energy estimates for the difference between  $u^\varepsilon$ and $u$, solutions to problem \eqref{perturbed} and problem \eqref{(P_0)}, respectively, that are crucial to establish the asymptotic formula for $u^\varepsilon-u$ of Theorem 4 in Section 5.
\begin{proposition}
Assume \eqref{H2}, \eqref{eq:2_6}, \eqref{eq:3_0}, \eqref{H3}. Setting $w := u^{\varepsilon} - u$, then
\begin{equation}\label{estimate3}
 \|w\|_{L^\infty(0,T;L^2(\Omega))} \leq C|\omega_\varepsilon|^{1/2},
\end{equation}
\begin{equation}\label{estw}
\|w\|_{L^2(0,T;H^1(\Omega))} \leq C|\omega_\varepsilon|^{1/2}.
\end{equation}
Moreover, there exists $0<\beta<1$ such that
\begin{equation}\label{estw3}
\|w\|_{L^2(\Omega\times (0,T))} \leq C |\omega_\varepsilon|^{\frac{1}{2} + \beta}.
\end{equation}
Here $C$ stands for a positive constant depending (at most) on $k_0, k_1, \Omega, T, M_1, M_2, \|u_0\|_{C^{2+\alpha}(\overline\Omega)}$.
\end{proposition}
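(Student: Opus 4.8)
The plan is to first derive the equation solved by the difference $w=u^\varepsilon-u$, which reveals that the perturbation is driven entirely by source terms supported on $\omega_\varepsilon$, then to establish \eqref{estimate3}--\eqref{estw} by a standard energy argument and the refined bound \eqref{estw3} by duality combined with higher integrability of the gradient. Subtracting \eqref{(P_0)} from \eqref{perturbed} and using $k_\varepsilon-k_0=-(k_0-k_1)\chi_{\omega_\varepsilon}$ together with $\chi_{\Omega\setminus\omega_\varepsilon}f(u^\varepsilon)-f(u)=\chi_{\Omega\setminus\omega_\varepsilon}(f(u^\varepsilon)-f(u))-\chi_{\omega_\varepsilon}f(u)$, I find that $w$ satisfies, weakly for every $v\in H^1(\Omega)$,
\[
\langle w_t,v\rangle+\int_\Omega k_\varepsilon\nabla w\cdot\nabla v+\int_{\Omega\setminus\omega_\varepsilon}(f(u^\varepsilon)-f(u))\,v=(k_0-k_1)\int_{\omega_\varepsilon}\nabla u\cdot\nabla v+\int_{\omega_\varepsilon}f(u)\,v,
\]
with $w(0)=0$, since the two problems share the initial datum. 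The decisive feature is that the right-hand side is concentrated on $\omega_\varepsilon$ and involves only $u$, whose gradient is bounded by Theorem \ref{regu}.

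For \eqref{estimate3}--\eqref{estw} I would take $v=w$. The diffusion term is bounded below by $k_1\|\nabla w\|_{L^2(\Omega)}^2$; writing $f(u^\varepsilon)-f(u)=b\,w$ with $b=\int_0^1 f'(u+sw)\,ds$ and $|b|\le M_2$ (since $u,u^\varepsilon\in[u_1,u_3]$), the reaction term is controlled by $M_2\|w\|_{L^2(\Omega)}^2$. On the right, Cauchy--Schwarz with $\|\nabla u\|_{L^\infty}\le C$ gives $(k_0-k_1)\int_{\omega_\varepsilon}\nabla u\cdot\nabla w\le C|\omega_\varepsilon|^{1/2}\|\nabla w\|_{L^2(\Omega)}$ and $\int_{\omega_\varepsilon}f(u)w\le M_1|\omega_\varepsilon|^{1/2}\|w\|_{L^2(\Omega)}$; absorbing the gradient term into the left via Young's inequality yields
\[
\frac{d}{dt}\|w\|_{L^2(\Omega)}^2+k_1\|\nabla w\|_{L^2(\Omega)}^2\le C\|w\|_{L^2(\Omega)}^2+C|\omega_\varepsilon|.
\]
Since $w(0)=0$, Gr\"onwall's lemma gives \eqref{estimate3}; integrating in $t$ and using \eqref{estimate3} and \eqref{poincare} gives \eqref{estw}.

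The refined estimate \eqref{estw3} is the heart of the matter, and I would obtain it by duality. For $\psi\in L^2(\Omega\times(0,T))$ let $\varphi$ solve the backward adjoint problem $-\varphi_t-\div(k_\varepsilon\nabla\varphi)+\chi_{\Omega\setminus\omega_\varepsilon}b\,\varphi=\psi$ with $\partial\varphi/\partial n=0$ and $\varphi(T)=0$. Testing the equation for $w$ against $\varphi$ and integrating by parts in space and time (all boundary contributions vanish thanks to $w(0)=0$, $\varphi(T)=0$ and the Neumann conditions) gives
\[
\int_0^T\!\!\int_\Omega w\,\psi=(k_0-k_1)\int_0^T\!\!\int_{\omega_\varepsilon}\nabla u\cdot\nabla\varphi+\int_0^T\!\!\int_{\omega_\varepsilon}f(u)\,\varphi.
\]
If $\nabla\varphi\in L^p(\Omega\times(0,T))$ for some $p>2$ with $\|\nabla\varphi\|_{L^p}\le C\|\psi\|_{L^2}$, then H\"older's inequality on the thin set $\omega_\varepsilon\times(0,T)$ produces a factor $(T|\omega_\varepsilon|)^{1-1/p}$ (the lower-order term involving $\varphi$ itself being controlled analogously, with an even larger power of $|\omega_\varepsilon|$ coming from the parabolic Sobolev embedding). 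Taking the supremum over $\|\psi\|_{L^2}=1$ then gives $\|w\|_{L^2(\Omega\times(0,T))}\le C|\omega_\varepsilon|^{1-1/p}$, that is \eqref{estw3} with $\beta=\tfrac12-\tfrac1p>0$.

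The hard part will be the higher integrability estimate $\nabla\varphi\in L^p$, $p>2$, for the adjoint parabolic equation, with an exponent and constant depending only on the ellipticity bounds $k_1,k_0$, on $\Omega$, $T$ and $M_2$, and in particular \emph{uniform in} $\varepsilon$. This is a parabolic Meyers-type result, obtained from a reverse H\"older inequality via Gehring's lemma; the uniformity is what makes the whole scheme work, and it holds precisely because $k_\varepsilon$ enters only through $0<k_1\le k_\varepsilon\le k_0$. One must also check that the merely bounded zeroth-order coefficient $b$ does not degrade the estimate, which is routine since it is a lower-order perturbation. This mirrors the strategy used in the stationary setting of \cite{BCMP} and, for the linear conductivity equation, in \cite{CV}.
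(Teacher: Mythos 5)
Your derivation of the equation for $w$ and your energy proof of \eqref{estimate3}--\eqref{estw} coincide with the paper's (the paper invokes the coercivity \eqref{aepscoer} coming from the Poincar\'e-type inequality \eqref{poincare} where you use $k_1$-ellipticity plus the zeroth-order term; the two are interchangeable). For \eqref{estw3} you take a genuinely different route. You keep the $w$-equation in the form \eqref{PD}, whose right-hand side involves only the uniformly bounded field $\nabla u$, and dualize against a general source $\psi$; the price is that your adjoint problem carries the discontinuous coefficient $k_\varepsilon$, so the crucial bound $\|\nabla\varphi\|_{L^p}\le C\|\psi\|_{L^2}$ with $p>2$ and $C$ independent of $\varepsilon$ is a parabolic Meyers/Gehring higher-integrability theorem. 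The paper trades the other way: it notes that $w$ also solves \eqref{PD2}, in which the diffusion coefficient is the constant $k_0$ and the inclusion appears in the source through $\chi_{\omega_\varepsilon}\nabla u^\varepsilon$; the adjoint problem \eqref{PA} (with source $-w$ rather than a general $\psi$) then has smooth constant coefficients, so classical $W^{2,1}_2$ theory, elliptic regularity and interpolation give $\overline w,\nabla\overline w\in L^{10/3}$ with norm bounded by $C\|w\|_{L^2(\Omega\times(0,T))}$, and no Meyers-type result is needed. The cost, which your scheme does not incur, is the unbounded factor $\nabla u^\varepsilon$ in the duality identity \eqref{woverw4}; the paper absorbs it by H\"older with exponent $q=10/7<2$ on the thin set $\omega_\varepsilon\times(0,T)$ together with the basic energy bound \eqref{estw}, arriving at the explicit range $\beta\in(0,\frac15]$, whereas your $\beta=\frac12-\frac1p$ is tied to the non-explicit Meyers exponent.

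Two remarks on your version. First, its entire weight rests on the Meyers estimate, which you name and motivate but do not prove; as stated (globally on $\Omega$, i.e.\ up to the Neumann boundary) it is a substantial result, arguably comparable in length to the paper's whole proof. However, you do not actually need the global version: by assumption \eqref{dist} the inclusion stays at distance $d_0$ from $\partial\Omega$, and in your H\"older step only the norm of $\nabla\varphi$ on $K_0\times(0,T)$ enters, so the \emph{interior} parabolic higher-integrability theorem (Giaquinta--Struwe type), citable with constants depending only on $k_0$, $k_1$, $d_0$ and the dimension, suffices; the lower-order term needs only $\varphi\in L^{10/3}$, which already follows from the energy estimate and interpolation, with no Meyers input. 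With that restriction made explicit, and the bounded zeroth-order coefficient $b$ absorbed into the source as you indicate, your argument closes and is a legitimate, somewhat more standard alternative to the paper's coefficient-switching trick.
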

\begin{proof}
Throughout the proof $C$ will be as in the statement of the Theorem.

On account of the assumptions, Theorems \ref{regu} and \ref{regueps} hold. Then $w$ solves the problem
\begin{equation}\label{PD}
\begin{cases}
&\displaystyle w_t- {\rm div} (k_\varepsilon\nabla w)
+ \chi_{\Omega / \omega_\varepsilon} p_\varepsilon w
= - {\rm div} (\widetilde k \chi_{\omega_\varepsilon} \nabla u) + \chi_{\omega_\varepsilon} f(u), \ {\rm in} \ \Omega\times (0,T),\\
& \displaystyle {\partial w \over \partial n}  = 0, \  {\rm on } \  \partial\Omega \times (0,T), \\
& w(0) = 0,  \ {\rm in} \ \Omega,
\end{cases}
\end{equation}
where we have set $\widetilde k := k_0 - k_1 > 0$ and
\begin{equation}\label{peps}
p_\varepsilon w := f^\prime(z_\varepsilon)w = f(u^{\varepsilon}) - f(u),
\end{equation}
$z_\varepsilon$ being a value between $u^\varepsilon$ and $u$. By means of \eqref{boundu}, \eqref{boundueps}
and recalling \eqref{M1}, we have
\begin{equation}\label{M3}
u_1 \leq z_\varepsilon \leq u_3, \quad \quad |p_\varepsilon| = |f^\prime(z_\varepsilon)| \leq M_2.
\end{equation}
Multiplying the first equation by $w$ in \eqref{PD} by $w$  and integrating by parts over $\Omega$, we get
$$
\frac{1}{2} \frac{d}{dt}\int_\Omega w^2 + \int_\Omega k_\varepsilon|\nabla w|^2
+ \int_\Omega\chi_{\Omega / \omega_\varepsilon} p_\varepsilon w^2
= \int_\Omega \widetilde k \chi_{\omega_\varepsilon} \nabla u\nabla w + \int_\Omega \chi_{\omega_\varepsilon} f(u)w.
$$
Adding and subtracting $\displaystyle{\int_\Omega \chi_{\Omega\setminus\omega_\varepsilon} w^2}$ and applying \eqref{aepscoer} we obtain
$$
\frac{1}{2} \frac{d}{dt}\int_\Omega w^2 + S\|w\|^2_{H^1(\Omega)}
\leq \int_{\omega_\varepsilon} \widetilde k  \nabla u\nabla w + \int_{\omega_\varepsilon} f(u)w
- \int_\Omega\chi_{\Omega / \omega_\varepsilon} (p_\varepsilon - 1) w^2.
$$
Recalling \eqref{M1} and \eqref{M3}, thanks to Young's inequality we deduce
$$
\frac{1}{2} \frac{d}{dt}\int_\Omega w^2 + S\|w\|^2_{H^1(\Omega)}
\leq \widetilde k\left (\frac{\widetilde k}{2S}\int_{\omega_\varepsilon}  | \nabla u|^2 + \frac{S}{2\widetilde k}\int_{\Omega}|\nabla w|^2\right)
+ \frac{1}{2}\int_{\omega_\varepsilon} (f(u))^2 
+ \int_\Omega (M_2 + \frac32) w^2,
$$
so that
\begin{equation}\label{estimate1}
\frac{1}{2} \frac{d}{dt}\int_\Omega w^2 + \frac{S}{2}\|w\|^2_{H^1(\Omega)}
\leq \frac{(\widetilde k)^2}{2S}\int_{\omega_\varepsilon}  | \nabla u|^2
+ \frac{1}{2}\int_{\omega_\varepsilon} M_1^2
+  (M_2 + \frac32) \int_\Omega w^2,
\end{equation}
and finally, see \eqref{bounduc2},
$$
 \frac{d}{dt}\|w(t)\|^2_{L^2(\Omega)}
\leq C \left (|\omega_\varepsilon| +  \|w(t)\|^2_{L^2(\Omega)}\right ).
$$
Recalling $w(0) = 0$, an application of Gronwall's Lemma implies
\begin{equation}\label{estimate2}
 \|w(t)\|^2_{L^2(\Omega)} \leq C|\omega_\varepsilon|, \quad t \in (0,T),
\end{equation}
so that \eqref{estimate3} follows.
Integrating now inequality \eqref{estimate1} on $(0,T)$ we get
\begin{equation*}
\int_\Omega w^2(T) + C\int_0^T\|w(t)\|^2_{H^1(\Omega)}dt
\leq C \left (|\omega_\varepsilon| +  \int_0^T\|w(t)\|^2_{L^2(\Omega)}dt\right),
\end{equation*}
and a combination with \eqref{estimate2} gives \eqref{estw}.

In order to obtain the more refined estimate \eqref{estw3}, observe that $w$ also
solves problem
\begin{equation}\label{PD2}
\begin{cases}
&\displaystyle w_t- {\rm div} (k_0\nabla w)
+ \chi_{\Omega / \omega_\varepsilon} p_\varepsilon w
= - {\rm div} (\widetilde k \chi_{\omega_\varepsilon} \nabla u^\varepsilon) + \chi_{\omega_\varepsilon} f(u), \ {\rm in} \ \Omega\times (0,T),\\
& \displaystyle {\partial w \over \partial n}  = 0, \  {\rm on } \  \partial\Omega \times (0,T), \\
& w(0) = 0,  \ {\rm in} \ \Omega.
\end{cases}
\end{equation}
Let's now introduce the auxiliary function $\overline w$, solution to the adjoint problem
\begin{equation}\label{PA}
\begin{cases}
&\displaystyle {\overline w}_t + {\rm div} (k_0\nabla \overline w)
- \chi_{\Omega / \omega_\varepsilon} p_\varepsilon \overline w
= - w, \ {\rm in} \ \Omega\times (0,T),\\
& \displaystyle {\partial \overline w \over \partial n}  = 0, \  {\rm on } \  \partial\Omega \times (0,T), \\
& \overline w(T) = 0,  \ {\rm in} \ \Omega.
\end{cases}
\end{equation}
By the change of variable $t \to T - t$, problem \eqref{PA} is equivalent to
\begin{equation}\label{PA2}
\begin{cases}
&\displaystyle z_t - {\rm div} (k_0\nabla z)
+ \chi_{\Omega / \omega_\varepsilon} \hat p_\varepsilon z
= \hat w, \ {\rm in} \ \Omega\times (0,T),\\
& \displaystyle {\partial z \over \partial n}  = 0, \  {\rm on } \  \partial\Omega \times (0,T), \\
& z (0) = 0,  \ {\rm in} \ \Omega,
\end{cases}
\end{equation}
where we have set $z(x,t) = \overline w(x, T - t), \, \hat p_\varepsilon (x,t) =  p_\varepsilon (x,T - t), \, \hat w (x,t)= w(x, T-t)$.

Since  $|\chi_{\Omega / \omega_\varepsilon} \hat p_\varepsilon|$ is bounded in $\Omega \times (0,T)$ and $w \in C^{\alpha, \alpha/2}(\overline\Omega\times [0,T])$,
standard arguments show that problem \eqref{PA2} admits a unique solution $z$ such that (see \cite[Ch.4]{LSU})
$$z \in W^{2,1}_2(\Omega \times (0,T)) := \left\{ z \in L^2(\Omega \times (0,T)) \, | \, z \in H^1(0,T; L^2(\Omega)) \cap L^2(0,T;H^2(\Omega))\right\}.$$

\textcolor{black}{Hereon, up to equation \eqref{est6z}, all the equations depend on $t$ and are valid for every $t \in (0,T)$; however, we will omit  this dependence for the sake of notation.}

Moreover, multiplying the first equation in \eqref{PA2} by $z$ and integrating over $\Omega$, we get
$$
\frac{1}{2}\frac{d}{dt}\int_\Omega z^2 +  k_0\int_\Omega|\nabla z|^2 + k_0\int_\Omega z^2
= \int_\Omega \hat w z -\int_\Omega \chi_{\Omega / \omega_\varepsilon} \hat p_\varepsilon z^2 + k_0\int_\Omega z^2.
$$
By means of Young's inequality and recalling \eqref{M3}, we have
\begin{equation}\label{est1z}
\frac{1}{2}\frac{d}{dt}\| z\|_{L^2(\Omega)}^2 +  \frac{k_0}{2}\|z\|^2_{H^1(\Omega)}
\leq \frac{1}{2k_0}\|\hat w\|_{L^2(\Omega)}^2  + (M_2 + k_0) \| z\|_{L^2(\Omega)}^2,
\end{equation}
and then
\begin{equation}\label{est2z}
\frac{d}{dt}\| z\|_{L^2(\Omega)}^2
\leq \frac{1}{k_0}\|\hat w\|_{L^2(\Omega)}^2 + 2(M_2 + k_0) \| z\|_{L^2(\Omega)}^2.
\end{equation}
Recalling $z(x,0)= 0$, an application of Gronwall's Lemma gives
\begin{equation}\label{est3z}
\| z \|_{L^2(\Omega)}^2
\leq C\|\hat w\|_{L^2(\Omega)}^2.
\end{equation}
Let's now multiply the first equation in \eqref{PA2} by $z_t$ and integrate over $\Omega$. We get
$$
\int_\Omega z_t^2 +  \frac{k_0}{2}\frac{d}{dt}\int_\Omega |\nabla z|^2
= \int_\Omega \hat w z_t -\int_\Omega \chi_{\Omega / \omega_\varepsilon} \hat p_\varepsilon z z_t.
$$
An application of Young's inequality gives
$$
\frac{1}{2}\int_\Omega z_t^2 +  \frac{k_0}{2}\frac{d}{dt}\int_\Omega|\nabla z|^2
\leq \int_\Omega (\hat w)^2 +\int_\Omega \chi_{\Omega / \omega_\varepsilon} (\hat p_\varepsilon)^2 z^2,
$$
and then
\begin{equation}\label{est6z}
\frac{1}{2}\|z_t\|_{L^2(\Omega)}^2 +  \frac{k_0}{2}\frac{d}{dt}\|\nabla z\|_{L^2(\Omega)}^2
\leq \|\hat w\|_{L^2(\Omega)}^2 +M_2^2\| z\|_{L^2(\Omega)}^2.
\end{equation}
Combining \eqref{est6z} and \eqref{est3z},
integrating in time on $(0,t)$, and using $\nabla z(0) = 0$ we deduce
$$
\|\nabla z(t)\|_{L^2(\Omega)}^2 \leq C \|\hat w\|_{L^2(\Omega \times (0,t))}^2, \quad t \in (0,T),
$$
so that
\begin{equation}\label{est6zbis}
\|z\|_{L^\infty(0,T;H^1(\Omega))}^2 \leq C \|\hat w\|_{L^2(\Omega \times (0,T))}^2.
\end{equation}
The same computations also gives

\begin{equation}\label{est7z}
\|z_t\|_{L^2(\Omega \times (0,T))}^2
\leq C\|\hat w\|_{L^2(\Omega \times (0,T))}^2.
\end{equation}
Then, an application of standard elliptic regularity results to problem \eqref{PA2} implies (see \cite{GT})
\begin{equation}\label{est7zbis}
\|z\|_{L^2(0,T;H^2(\Omega)}^2
\leq C\|\hat w\|_{L^2(\Omega \times (0,T))}^2.
\end{equation}
Recalling the definition of $z$ and $\hat w$, by estimates \eqref{est6zbis} and \eqref{est7zbis}
we get
\begin{equation}\label{est8z}
\|\overline w \|^2_{L^\infty(0,T;H^1(\Omega))} + \|\overline w\|_{L^2(0,T;H^2(\Omega))}^2
\leq C\|w\|_{L^2(\Omega \times (0,T))}^2,
\end{equation}
Finally, we want to prove that there exists $p >2$ such that
\begin{equation}\label{est9zbis}
\|\overline w\|_{L^p(\Omega\times (0,T))} + \|\nabla \overline w\|_{L^p(\Omega\times (0,T))} \leq C\|w\|_{L^2(\Omega \times (0,T))}.
\end{equation}
To this aim, on account of \eqref{est8z} and Sobolev immersion theorems, we deduce
\begin{equation}\label{est10z}
\|\overline w \|^2_{L^6(\Omega \times (0,T))} \leq C\|\overline w \|^2_{L^\infty(0,T;H^1(\Omega))}
\leq C\|w\|_{L^2(\Omega \times (0,T))}^2.
\end{equation}
Moreover, again from \eqref{est8z} we have
\begin{equation}\label{est11z}
\nabla \overline w \in L^\infty(0,T; L^2(\Omega)) \cap L^2(0,T;L^6(\Omega)).
\end{equation}
From well-known interpolation estimates (cf. \cite{N}) we infer
\begin{equation} \label{est12z}
\|\nabla \overline w \|^{10/3}_{L^{10/3}(\Omega \times (0,T))}
\leq C\|\nabla \overline w \|^{2}_{L^2(0,T;L^6(\Omega))} \|\nabla \overline w \|^{4/3}_{L^{4/3}(0,T;L^2(\Omega))}
\end{equation}
and therefore, using \eqref{est8z},
\begin{equation} \label{est13z}
\|\nabla \overline w \|^{10/3}_{L^{10/3}(\Omega \times (0,T))}
\leq C\| w \|^{2}_{L^2(\Omega \times (0,T))} \| w \|^{4/3}_{L^2(\Omega \times (0,T))} \leq C \| w \|^{10/3}_{L^2(\Omega \times (0,T))},
\end{equation}
so that \eqref{est9zbis} holds for any $p \in (2, \frac{10}{3}]$.

Let us now multiply the evolution equation in \eqref{PD2} by $\overline w$ and the evolution equation in \eqref{PA} by $w$, respectively.
Integrating on $\Omega$ we obtain
\begin{equation}\label{woverw1}
\int_\Omega w_t \overline w + k_0 \int_\Omega \nabla w \cdot  \nabla \overline w
+ \int_\Omega \chi_{\Omega / \omega_\varepsilon} p_\varepsilon w \overline w
=  \widetilde k \int_{\omega_\varepsilon} \nabla u^\varepsilon \nabla \overline w  + \int _{\omega_\varepsilon} f(u) \overline w,
\end{equation}
\begin{equation}\label{woverw2}
\int_\Omega \overline w_t  w -  k_0 \int_\Omega \nabla \overline w \cdot  \nabla  w
- \int_\Omega \chi_{\Omega / \omega_\varepsilon} p_\varepsilon \overline w  w
= -  \int _\Omega w^2.
\end{equation}
Summing up \eqref{woverw1} and \eqref{woverw2} we obtain 
$$
\int_\Omega (w_t \overline w + \overline w_t  w) =
\widetilde k \int_{\omega_\varepsilon} \nabla u^\varepsilon \nabla \overline w  + \int _{\omega_\varepsilon} f(u) \overline w
 -  \int _\Omega w^2,
$$
subsequently, an integration in time on $(0,T)$ gives
\begin{equation}\label{woverw3}
\int_0^T \int _\Omega w^2
= -\int_0^T\int_\Omega (w_t \overline w + \overline w_t  w)
+ \widetilde k \int_0^T\int_{\omega_\varepsilon} \nabla u^\varepsilon \nabla \overline w  + \int_0^T\int _{\omega_\varepsilon} f(u) \overline w.
\end{equation}
Recalling the conditions at time $t=0$ for $w$ and at time $t=T$ for $\overline w$, we get
\[
\begin{array}{rl}
\displaystyle \int_0^T\int_\Omega (w_t \overline w + \overline w_t  w) = \int_\Omega \int_0^T \! \left (w_t \overline w +\overline w_t  w \right) & \\
& \hspace{-5cm} \displaystyle  = \int_\Omega\Big ((w\overline w)(T) - (w\overline w)(0) - \int_0^T (w \overline w_t  + \overline w_t  w) \Big)
=0
\end{array}
\]
So that \eqref{woverw3} becomes
\begin{equation}\label{woverw4}
\int_0^T \int _\Omega w^2
= \widetilde k \int_0^T\int_{\omega_\varepsilon} \nabla u^\varepsilon \nabla \overline w  + \int_0^T\int _{\omega_\varepsilon} f(u) \overline w.
\end{equation}
Using now H\"{o}lder inequality we deduce
$$\|w\|_{L^2(\Omega\times (0,T))}^2
\leq \|\nabla u^\varepsilon\|_{L^{q}(\omega_\varepsilon \times (0,T))}\|\nabla \overline w\|_{L^{p}(\omega_\varepsilon \times (0,T))}
+\|f(u)\|_{L^{q}(\omega_\varepsilon \times (0,T))} \|\overline w\|_{L^{p}(\omega_\varepsilon \times (0,T))},
$$
where we may choose for instance $p=10/3$ and $q = 10/7$.

By means of \eqref{est9zbis} and  \eqref{M1},  from the previous inequality we get
$$
\|w\|^2_{L^2(\Omega\times (0,T))} \leq C\| w\|_{L^{2}(\Omega \times (0,T))}\left (\|\nabla u^\varepsilon\|_{L^{q}(\omega_\varepsilon \times (0,T))}
+ |\omega_\varepsilon|^{\frac{1}{q}} \right),
$$
and therefore
\begin{equation}\label{estw2}
\|w\|_{L^2(\Omega\times (0,T))} \leq C\left (\|\nabla u^\varepsilon\|_{L^{q}(\omega_\varepsilon \times (0,T))}
+ |\omega_\varepsilon|^{\frac{1}{q}} \right ).
\end{equation}
Thanks to \eqref{bounduc2} we also have
\[
\begin{array}{rl}
\|\nabla u^\varepsilon\|_{L^{q}(\omega_\varepsilon \times (0,T))}  & \leq
\|\nabla u^\varepsilon - \nabla u\|_{L^{q}(\omega_\varepsilon \times (0,T))} + \|\nabla u\|_{L^{q}(\omega_\varepsilon \times (0,T))} \\
& \leq \|\nabla w \|_{L^{q}(\omega_\varepsilon \times (0,T))} + C|\omega_\varepsilon|^{\frac{1}{q}}.
\end{array}
\]
Finally, using again H\"{o}lder inequality and \eqref{estw}, and recalling that $q \in [10/7,)2$, we obtain
\begin{align}\nonumber
\|\nabla w \|_{L^{q}(\omega_\varepsilon \times (0,T))} &\leq
\left ( \int_0^T\left (\int_{\omega_\varepsilon}|\nabla w (t)|^{q \frac{2}{p^\prime}}\right)^{\frac{q}{2}}
\left (\int_{\omega_\varepsilon} 1 \right)^{\frac{2-q}{2}}\right )^{\frac{1}{q}}\\ \nonumber
&\leq |\omega_\varepsilon|^{\frac{1}{q} - \frac{1}{2}}\left (\int_0^T\|\nabla w(t)\|^{q}_{L^2(\Omega)} \right )^{\frac{1}{q}}
\leq  |\omega_\varepsilon|^{\frac{1}{q} - \frac{1}{2}}\|\nabla w\|_{L^{q}(0,T; L^2\Omega))} \\ \nonumber
&\leq C(\Omega) |\omega_\varepsilon|^{\frac{1}{q} - \frac{1}{2}}\|\nabla w\|_{L^2(\Omega \times (0,T))}
\leq C |\omega_\varepsilon|^{\frac{1}{q}}.
\end{align}
Combining the previous estimate with \eqref{estw2}, since $\frac{1}{q}\in(\frac{1}{2},\frac{7}{10}]$ we can conclude that \eqref{estw3} holds with $\beta\in(0,\frac{1}{5}]$.
\end{proof}

\section{The asymptotic formula}

\setcounter{equation}{0}

In this section we derive and prove an asymptotic representation formula for $w = u_\varepsilon - u$ in analogy with \cite{BCMP} and \cite{CV}.
Let $\Phi =\Phi(x,t)$ be any solution of
\begin{equation}\label{(PPhi)}
\begin{cases}
& \Phi_t + k_0\Delta \Phi  - f^\prime(u)\Phi  = 0,\ \ \ {\rm in} \ \ \Omega\times (0,T),\\
& \Phi(T) = 0, \ \ \ {\rm in} \ \ \Omega.
\hskip2truecm
\end{cases}
\end{equation}

Our main result is the following
\begin{theorem}\label{expansion}
Assume \eqref{H2}, \eqref{eq:2_6}, \eqref{eq:3_0}, \eqref{H3}. Let $u^\varepsilon$ and $u$ be the solutions to \eqref{perturbed} and \eqref{(P_0)}
and $\Phi$ a solution to \eqref{(PPhi)}, respectively. Then, there exist a sequence $\omega_{\varepsilon_n}$ satisfying \eqref{dist} and \eqref{smalldim}
with $|\omega_{\varepsilon_n}|\rightarrow 0$, a regular Borel measure $\mu$ and a symmetric matrix $\mathcal{M}$ with elements
$\mathcal{M}_{ij} \in L^2(\Omega, d\mu)$ such that, for $\varepsilon \to 0$,
\begin{align}\label{wPhi8}
\int_0^T\int_{\partial\Omega} k_0\frac{\partial\Phi}{\partial n}(u^\varepsilon - u)
= |\omega_{\varepsilon_n}| \left\{ \int_0^T\int_\Omega \tilde k \mathcal{M} \nabla u\cdot  \nabla \Phi d\mu
+  \int_0^T\int_\Omega  f(u)\Phi d\mu + o(1) \right\}.
\end{align}
\end{theorem}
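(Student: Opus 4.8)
The plan is to test the equation \eqref{PD2} for $w=u^\varepsilon-u$ against the adjoint solution $\Phi$ and to read off the boundary term. First I would multiply the first equation in \eqref{PD2} by $\Phi$, integrate over $\Omega\times(0,T)$, and transfer every derivative onto $\Phi$. The integration by parts in time produces no endpoint contribution, since $w(0)=0$ and $\Phi(T)=0$; Green's second identity together with $\partial w/\partial n=0$ on $\partial\Omega$ leaves precisely the boundary term $\int_0^T\int_{\partial\Omega}k_0\,w\,\partial\Phi/\partial n$, which is the left-hand side of \eqref{wPhi8}. On the right-hand side the source $-\div(\widetilde k\,\chi_{\omega_\varepsilon}\nabla u^\varepsilon)$ integrates by parts to $\int_0^T\int_{\omega_\varepsilon}\widetilde k\,\nabla u^\varepsilon\cdot\nabla\Phi$, with no boundary contribution because $\omega_\varepsilon$ is compactly contained in $\Omega$ by \eqref{dist}, while the reaction source gives $\int_0^T\int_{\omega_\varepsilon}f(u)\Phi$. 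Using \eqref{(PPhi)} to replace $-\Phi_t-k_0\Delta\Phi$ by $-f'(u)\Phi$, and recalling $p_\varepsilon=f'(z_\varepsilon)$ from \eqref{peps}, the bulk integrand collapses to $w\big(\chi_{\Omega\setminus\omega_\varepsilon}(f'(z_\varepsilon)-f'(u))-\chi_{\omega_\varepsilon}f'(u)\big)\Phi$. This yields the exact identity
\[
\int_0^T\int_{\partial\Omega}k_0\,\frac{\partial\Phi}{\partial n}\,w=\int_0^T\int_{\omega_\varepsilon}\widetilde k\,\nabla u^\varepsilon\cdot\nabla\Phi+\int_0^T\int_{\omega_\varepsilon}f(u)\Phi+R_\varepsilon,
\]
where $R_\varepsilon:=-\int_0^T\int_\Omega w\big(\chi_{\Omega\setminus\omega_\varepsilon}(f'(z_\varepsilon)-f'(u))-\chi_{\omega_\varepsilon}f'(u)\big)\Phi$.

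The second step is to show $R_\varepsilon=o(|\omega_\varepsilon|)$. Since $f$ is a cubic polynomial, $f'$ is Lipschitz on $[u_1,u_3]$ and $|z_\varepsilon-u|\le|w|$ by \eqref{M3}, so the first piece of $R_\varepsilon$ is controlled by $C\|\Phi\|_\infty\|w\|^2_{L^2(\Omega\times(0,T))}=O(|\omega_\varepsilon|^{1+2\beta})$ thanks to the refined estimate \eqref{estw3}; the second piece is bounded by $C\|w\|_{L^2(\Omega\times(0,T))}|\omega_\varepsilon|^{1/2}=O(|\omega_\varepsilon|^{1+\beta})$ via the Cauchy--Schwarz inequality. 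Both are negligible with respect to $|\omega_\varepsilon|$.

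The core of the argument is the passage to the limit in the two leading terms, for which I would adapt the construction of Capdeboscq and Vogelius \cite{CV}. I would introduce the normalized probability measures $d\mu_\varepsilon:=|\omega_\varepsilon|^{-1}\chi_{\omega_\varepsilon}\,dx$ and extract, by weak-$*$ compactness in $(C(\overline\Omega))'$, a subsequence $\omega_{\varepsilon_n}$ with $\mu_{\varepsilon_n}\weak\mu$ for a regular Borel measure $\mu$. For the reaction term, continuity of $f(u)\Phi$ on $\overline\Omega\times[0,T]$ and dominated convergence in $t$ give $|\omega_{\varepsilon_n}|^{-1}\int_0^T\int_{\omega_{\varepsilon_n}}f(u)\Phi\to\int_0^T\int_\Omega f(u)\Phi\,d\mu$. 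The gradient term is the delicate one: estimate \eqref{estw} yields $|\omega_\varepsilon|^{-1}\int_0^T\int_{\omega_\varepsilon}|\nabla u^\varepsilon|^2\le C$, so $|\omega_\varepsilon|^{-1/2}\chi_{\omega_\varepsilon}\nabla u^\varepsilon$ is bounded in $L^2(\Omega\times(0,T))$ and the vector measures $|\omega_\varepsilon|^{-1}\chi_{\omega_\varepsilon}\nabla u^\varepsilon\,dx\,dt$ are bounded in total variation. Passing to a further subsequence, I would identify the weak-$*$ limit as $\widetilde k\,\mathcal{M}\nabla u\,d\mu\,dt$ for a symmetric matrix $\mathcal{M}$ with $\mathcal{M}_{ij}\in L^2(\Omega,d\mu)$: probing the normalized gradient against a basis of background fields defines $\mathcal{M}$ independently of the particular solutions; membership in $L^2(d\mu)$ follows from the uniform bound above via Cauchy--Schwarz against $\mu_\varepsilon$; linearity of the limit in the background field $\nabla u$ rests on the linearity of the $w$-equation \eqref{PD2}, whose coefficient $p_\varepsilon$ is uniformly bounded by \eqref{M3}; and symmetry follows from a reciprocity argument between two background correctors. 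Combining these limits with Step~2 and multiplying through by $|\omega_{\varepsilon_n}|$ produces \eqref{wPhi8}.

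I expect the genuine obstacle to be this polarization-tensor construction and the identification of the weak-$*$ limit of the normalized gradients, exactly as in the elliptic setting of \cite{CV}, but with two additional difficulties specific to our problem. The \emph{nonlinearity} is handled by linearizing through the bounded factor $p_\varepsilon$ and absorbing the quadratic remainder into $R_\varepsilon$, as in Step~2. The \emph{parabolic structure} requires arguing that the small inclusion responds quasi-statically, so that the polarization matrix $\mathcal{M}$ is time-independent and the entire $t$-dependence of the leading term is carried by $\nabla u(\cdot,t)$ and $\nabla\Phi(\cdot,t)$; this is the point at which the adaptation from the stationary case of \cite{BCMP} to the present three-dimensional parabolic problem demands the most care.
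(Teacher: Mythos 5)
Your first two steps coincide with the paper's own argument: the duality identity obtained by testing \eqref{PD2} against $\Phi$ is exactly the paper's \eqref{wPhi4}, and your estimate of the remainder $R_\varepsilon$ via the Lipschitz property of $f'$ on $[u_1,u_3]$, the refined bound \eqref{estw3}, and Cauchy--Schwarz reproduces \eqref{wPhi5}. The extraction of $\mu$ by weak-$*$ compactness and the treatment of the reaction term also match the paper's \eqref{L1}.

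The gap is in your third step, which is the heart of the theorem. You assert that the weak-$*$ limit of the vector measures $|\omega_\varepsilon|^{-1}\chi_{\omega_\varepsilon}\nabla u^\varepsilon\,dx\,dt$ can be identified as $\widetilde k\,\mathcal{M}\nabla u\,d\mu\,dt$ with a \emph{time-independent} symmetric $\mathcal{M}_{ij}\in L^2(\Omega,d\mu)$, but you offer no mechanism for this identification: ``probing against a basis of background fields'', ``linearity of the $w$-equation'' and ``a reciprocity argument'' are restatements of what must be proved, not proofs. Nothing in your sketch rules out that the limit measure fails to be local in $\nabla u(x,t)$, or that the tensor relating the two depends on $t$ --- this is precisely the quasi-static response that you flag as delicate but never establish. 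The paper's solution is to import the elliptic machinery of \cite{CV}: it introduces the static correctors $v_\varepsilon^{(j)}, v^{(j)}$ of \eqref{PV0eps}, for which the convergences \eqref{L1} (in particular $|\omega_{\varepsilon_n}|^{-1}\chi_{\omega_{\varepsilon_n}}\partial v_{\varepsilon_n}^{(j)}/\partial x_i\,dx \to \mathcal{M}_{ij}\,d\mu$ with $\mathcal{M}$ symmetric, $\mathcal{M}_{ij}\in L^2(\Omega,d\mu)$) are already known, and then proves the exchange identity \eqref{I0},
\[
\int_0^T\!\!\int_\Omega\frac{\chi_{\omega_\varepsilon}}{|\omega_\varepsilon|}\,\nabla u\cdot\nabla v_\varepsilon^{(j)}\,\Phi
=\int_0^T\!\!\int_\Omega\frac{\chi_{\omega_\varepsilon}}{|\omega_\varepsilon|}\,\nabla u^\varepsilon\cdot\nabla v^{(j)}\,\Phi+o(1),
\]
whose proof is a substantial computation of its own: it tests \eqref{PD} against $v_\varepsilon^{(j)}\Phi$ and \eqref{PD2} against $v^{(j)}\Phi$, invokes \eqref{estimate3}, \eqref{estw}, \eqref{estw3}, \eqref{estv1}, \eqref{estv2}, and disposes of the term $\int_0^T\int_\Omega w_t(v_\varepsilon^{(j)}-v^{(j)})\Phi$ by integrating by parts in time using $w(0)=0$, $\Phi(T)=0$, and $(v_\varepsilon^{(j)}-v^{(j)})_t=0$. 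Since $\nabla v^{(j)}=e_j$, this identity transfers the elliptic polarization tensor to the parabolic nonlinear problem and yields \eqref{L5}, i.e.\ exactly the locality and time-independence your sketch takes for granted. Without this identity (or an equivalent argument), your Step 3 does not go through.
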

To prove Theorem \ref{expansion}, we need to state some preliminary results. Let  $v_\varepsilon^{(j)}$ and $v^{(j)}$ be the variational solutions
(depending only on $x \in \Omega$) to the problems
\begin{equation}\label{PV0eps}
(PV_\varepsilon)
\left\{\begin{array}{ll}
&{\rm div} (k_\varepsilon\nabla v_\varepsilon^{(j)})  = 0,\ \  {\rm in}  \ \Omega,\cr
&\frac{\partial v_\varepsilon^{(j)}}{\partial n}  = n_j, \ \   {\rm on }  \ \partial\Omega, \cr
&\int_{\partial\Omega} v_\varepsilon^{(j)} = 0,
\end{array}
\right.
\quad
(PV_0) \
\left\{\begin{array}{ll}
&{\rm div} (k_0\nabla v^{(j)})  = 0,\  \ {\rm in}  \ \Omega,\cr
&\frac{\partial v^{(j)}}{\partial n}  = n_j, \ \   {\rm on }  \ \partial\Omega, \cr
&\int_{\partial\Omega} v^{(j)} =0,
\end{array}
\right.
\end{equation}
$n_j$ being the $j-th$ coordinate of the outward normal to $\partial\Omega$. It can be easily verified that
\begin{equation} \label{vj}
v^{(j)} = x_j - \frac{1}{|\partial\Omega|} \int_{\partial\Omega}x_j.
\end{equation}
The following results hold
\begin{proposition}
Let $v_\varepsilon^{(j)}$ and $v^{(j)}$ solutions to \eqref{PV0eps}, then there exists $C(\Omega)>0$ such that
\begin{equation}\label{estv1}
\|v_\varepsilon^{(j)} - v^{(j)}\|_{H^1(\Omega)} \leq C(\Omega)|\omega_\varepsilon|^{\frac12}.
\end{equation}
Moreover, for some $\eta \in (0, \frac12)$, there exists $C(\Omega, \eta)>0$ such that
\begin{equation}\label{estv2}
\|v_\varepsilon^{(j)} - v^{(j)}\|_{L^2(\Omega)} \leq C(\Omega, \eta)|\omega_\varepsilon|^{\frac12 + \eta}.
\end{equation}
\end{proposition}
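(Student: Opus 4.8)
The plan is to transpose the duality-and-higher-integrability scheme of the previous proposition to the (now purely elliptic, $x$-only) corrector problems \eqref{PV0eps}. Set $\phi := v_\varepsilon^{(j)} - v^{(j)}$ and write $k_\varepsilon = k_0 - \widetilde k\,\chi_{\omega_\varepsilon}$ with $\widetilde k := k_0 - k_1 > 0$, as in \eqref{eq:2_6}. Subtracting the weak formulations of the two problems in \eqref{PV0eps}, which share the same Neumann datum $n_j$ and the same normalization $\int_{\partial\Omega}(\cdot)=0$, yields
\[
\int_\Omega k_0\,\nabla\phi\cdot\nabla\psi = \widetilde k\int_{\omega_\varepsilon}\nabla v_\varepsilon^{(j)}\cdot\nabla\psi \qquad \forall\,\psi\in H^1(\Omega),
\]
together with $\partial\phi/\partial n = 0$ on $\partial\Omega$ and $\int_{\partial\Omega}\phi = 0$.

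To obtain \eqref{estv1} I would take $\psi = \phi$. Using $\nabla v_\varepsilon^{(j)} = \nabla v^{(j)} + \nabla\phi$ and $|\nabla v^{(j)}| \equiv 1$ (from \eqref{vj}), Cauchy--Schwarz gives
\[
k_0\|\nabla\phi\|_{L^2(\Omega)}^2 \le \widetilde k\big(|\omega_\varepsilon|^{1/2}+\|\nabla\phi\|_{L^2(\Omega)}\big)\|\nabla\phi\|_{L^2(\Omega)}.
\]
Since $k_0 - \widetilde k = k_1 > 0$, the quadratic term is absorbed and $\|\nabla\phi\|_{L^2(\Omega)}\le (\widetilde k/k_1)|\omega_\varepsilon|^{1/2}$; a Poincar\'e inequality adapted to the normalization $\int_{\partial\Omega}\phi = 0$ then upgrades this to the full $H^1$ bound \eqref{estv1}.

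The substance is the refined estimate \eqref{estv2}, which I would prove by duality. Write $\phi = \phi_0 + c$ with $\phi_0$ of zero mean over $\Omega$, and let $\overline\phi$ solve the self-adjoint Neumann problem $-\mathrm{div}(k_0\nabla\overline\phi) = \phi_0$, $\partial_n\overline\phi = 0$, $\int_\Omega\overline\phi = 0$, which is compatible precisely because $\int_\Omega\phi_0 = 0$. Choosing $\psi = \overline\phi$ above and integrating the adjoint equation against $\phi$ produces the identity $\|\phi_0\|_{L^2(\Omega)}^2 = \widetilde k\int_{\omega_\varepsilon}\nabla v_\varepsilon^{(j)}\cdot\nabla\overline\phi$. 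The gain beyond the trivial exponent comes from elliptic $H^2$-regularity on the $C^{2+\alpha}$ domain, $\|\overline\phi\|_{H^2(\Omega)}\le C\|\phi_0\|_{L^2(\Omega)}$, followed by the three-dimensional embedding $H^1(\Omega)\hookrightarrow L^6(\Omega)$ applied to $\nabla\overline\phi$, giving $\|\nabla\overline\phi\|_{L^6(\Omega)}\le C\|\phi_0\|_{L^2(\Omega)}$. H\"older on $\omega_\varepsilon$ with the conjugate pair $(6/5,6)$ then yields $\|\phi_0\|_{L^2(\Omega)}\le C\|\nabla v_\varepsilon^{(j)}\|_{L^{6/5}(\omega_\varepsilon)}$; splitting $\nabla v_\varepsilon^{(j)} = \nabla v^{(j)} + \nabla\phi$ and applying H\"older once more with \eqref{estv1} extracts the extra factor $|\omega_\varepsilon|^{1/3}$, so that $\|\phi_0\|_{L^2(\Omega)}\le C|\omega_\varepsilon|^{5/6}$.

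It remains to control the additive constant $c$, which is the principal bookkeeping obstacle, since $\int_\Omega\phi$ need not vanish here (unlike the mean-free source in the parabolic argument leading to \eqref{estw3}). From $\int_{\partial\Omega}\phi = 0$ one gets $c = -|\partial\Omega|^{-1}\int_{\partial\Omega}\phi_0$, and the multiplicative trace inequality $\|\phi_0\|_{L^2(\partial\Omega)}^2\le C\|\phi_0\|_{L^2(\Omega)}\|\phi_0\|_{H^1(\Omega)}$, combined with $\|\phi_0\|_{L^2}\le C|\omega_\varepsilon|^{5/6}$ and $\|\phi_0\|_{H^1}\le C|\omega_\varepsilon|^{1/2}$, gives $|c|\le C|\omega_\varepsilon|^{2/3}$. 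Hence $\|\phi\|_{L^2(\Omega)}\le \|\phi_0\|_{L^2(\Omega)} + |c|\,|\Omega|^{1/2}\le C|\omega_\varepsilon|^{2/3}$, i.e.\ \eqref{estv2} with, for instance, $\eta = 1/6$. The hard part is thus the interplay between the higher integrability of $\nabla\overline\phi$, which supplies all the gain over the exponent $1/2$, and the normalization accounting for $c$; the remaining steps are routine energy and H\"older estimates.
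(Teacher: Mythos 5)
Your proof is correct, but it takes a genuinely different route from the paper: the paper offers no argument at all for this proposition, disposing of it with the single line ``See Lemma 1 in \cite{CV}.'' In the cited Lemma 1 of Capdeboscq--Vogelius, the refined estimate \eqref{estv2} is obtained by a duality argument whose key technical tool is Meyers' higher-integrability theorem for gradients of solutions to elliptic equations with discontinuous coefficients; that argument needs little boundary regularity and works for general contrasts, but it yields a non-explicit exponent $\eta$ depending on $k_1/k_0$. You instead run the duality against the \emph{constant-coefficient} operator $-\div(k_0\nabla\cdot)$, so that the $C^{2+\alpha}$ smoothness of $\Omega$ from \eqref{eq:3_0} gives $H^2$ regularity of the dual solution, and the three-dimensional embedding $H^1(\Omega)\hookrightarrow L^6(\Omega)$ replaces Meyers; this buys fully explicit exponents ($|\omega_\varepsilon|^{5/6}$ for the mean-free part, $\eta=1/6$ overall) at the price of using the smoothness hypothesis, which the paper assumes anyway. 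Your accounting for the additive constant --- reconciling the zero-boundary-mean normalization in \eqref{PV0eps} with the zero-$\Omega$-mean normalization forced by the compatibility of the dual problem, via the multiplicative trace inequality --- is a real point that the citation sweeps under the rug, and you resolve it correctly. Two steps you leave implicit but should state: the cancellation of the boundary terms when subtracting the two weak formulations uses $k_\varepsilon=k_0$ near $\partial\Omega$, which follows from the separation assumption \eqref{dist}; and the constants you produce inevitably depend on $k_0,k_1$ as well as on $\Omega$ (the dependence written in the proposition is loose on this point, in your proof as in any other).
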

\begin{proof}
See Lemma 1 in \cite{CV}.
\end{proof}
\begin{proposition}
Let $u$ and $u_\varepsilon$ be the solutions to problems \eqref{(P_0)} and $\eqref{perturbed}$, respectively.
Consider $v_\varepsilon^{(j)}$ and  $v^{(j)}$ as in \eqref{PV0eps}.
Then, for any $\Phi \in C^1(\overline\Omega \times [0,T])$ with $\Phi(x,T) = 0$, the folllowing holds as $\varepsilon \to 0$,
\begin{equation} \label{I0}
\int_0^T\int_{\Omega}\frac{1}{|\omega_\varepsilon|} \chi_{\omega_\varepsilon} \nabla u\cdot \nabla v_\varepsilon^{(j)} \Phi dxdt =
\int_0^T\int_{\Omega}\frac{1}{|\omega_\varepsilon|} \chi_{\omega_\varepsilon}\nabla u_\varepsilon \cdot \nabla v^{(j)} \Phi dxdt + o(1).
\end{equation}
\end{proposition}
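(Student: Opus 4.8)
The plan is to reduce the claimed identity to a reciprocity between the correctors $v_\varepsilon^{(j)},v^{(j)}$ and the states $u^\varepsilon,u$, and then to show that the mismatch is of lower order than $|\omega_\varepsilon|$ by invoking the \emph{refined} $L^2$-estimates \eqref{estw3} and \eqref{estv2} (not merely the $H^1$-bounds). Set $w:=u^\varepsilon-u$ and $V:=v_\varepsilon^{(j)}-v^{(j)}$. Since $\nabla v^{(j)}=e_j$ by \eqref{vj}, writing $\nabla v_\varepsilon^{(j)}=e_j+\nabla V$ and $\nabla u^\varepsilon=\nabla u+\nabla w$ makes the common term $\int_0^T\!\int_{\omega_\varepsilon}\Phi\,\partial_j u$ cancel, so that
\[
\int_0^T\!\!\int_{\omega_\varepsilon}\nabla u\cdot\nabla v_\varepsilon^{(j)}\,\Phi-\int_0^T\!\!\int_{\omega_\varepsilon}\nabla u^\varepsilon\cdot\nabla v^{(j)}\,\Phi = P-Q,
\]
where $P:=\int_0^T\!\int_{\omega_\varepsilon}\Phi\,\nabla u\cdot\nabla V$ and $Q:=\int_0^T\!\int_{\omega_\varepsilon}\Phi\,\nabla w\cdot\nabla v^{(j)}$. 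Dividing by $|\omega_\varepsilon|$, it thus suffices to prove $P-Q=o(|\omega_\varepsilon|)$.

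First I would record a clean weak equation for $V$: since $v_\varepsilon^{(j)}$ and $v^{(j)}$ share the Neumann datum $n_j$, subtracting the two weak formulations in \eqref{PV0eps} eliminates the boundary term, and using $\nabla v^{(j)}=e_j$ together with $k_0-k_\varepsilon=\widetilde k\chi_{\omega_\varepsilon}$ gives
\[
\int_\Omega k_\varepsilon\,\nabla V\cdot\nabla\psi = \widetilde k\int_{\omega_\varepsilon}\partial_j\psi,\qquad\forall\,\psi\in H^1(\Omega),
\]
valid for a.e.\ $t$. Testing this with $\psi=w\Phi$ and discarding $\widetilde k\int_{\omega_\varepsilon}w\,\partial_j\Phi=o(|\omega_\varepsilon|)$ (by \eqref{estw3}) yields $\widetilde k\,Q=\int_0^T\!\int_\Omega k_\varepsilon\nabla V\cdot\nabla(w\Phi)+o(|\omega_\varepsilon|)$. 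On the other hand, using $V\Phi$ (admissible by \eqref{regueps1}) as a test function in the weak form of \eqref{PD}, and discarding $\widetilde k\int_{\omega_\varepsilon}V\,\nabla u\cdot\nabla\Phi=o(|\omega_\varepsilon|)$ (by \eqref{estv2}), gives
\[
\widetilde k\,P=\int_0^T\!\!\int_\Omega w_t\,V\Phi+\int_0^T\!\!\int_\Omega k_\varepsilon\nabla w\cdot\nabla(V\Phi)+\int_0^T\!\!\int_\Omega\chi_{\Omega\setminus\omega_\varepsilon}p_\varepsilon\,w\,V\Phi-\int_0^T\!\!\int_{\omega_\varepsilon}f(u)\,V\Phi+o(|\omega_\varepsilon|).
\]

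Subtracting the two identities, the symmetric principal part $\int k_\varepsilon\,\Phi\,\nabla w\cdot\nabla V$ cancels --- this is the heart of the reciprocity, and it works precisely because both \eqref{PD} and the corrector equation carry the same coefficient $k_\varepsilon$ --- leaving
\[
\widetilde k\,(P-Q)=\int_0^T\!\!\int_\Omega w_t\,V\Phi+\int_0^T\!\!\int_\Omega k_\varepsilon\,(V\nabla w-w\nabla V)\cdot\nabla\Phi+\int_0^T\!\!\int_\Omega\chi_{\Omega\setminus\omega_\varepsilon}p_\varepsilon\,w\,V\Phi-\int_0^T\!\!\int_{\omega_\varepsilon}f(u)\,V\Phi+o(|\omega_\varepsilon|).
\]
The last three space--time integrals are then shown to be $o(|\omega_\varepsilon|)$ by Cauchy--Schwarz, each splitting as a product of a factor $|\omega_\varepsilon|^{1/2}$ (from \eqref{estw} / \eqref{estv1}, or from H\"older on the small set $\omega_\varepsilon$) and a factor $|\omega_\varepsilon|^{1/2+\beta}$ or $|\omega_\varepsilon|^{1/2+\eta}$ (from \eqref{estw3} / \eqref{estv2}), using $|p_\varepsilon|\le M_2$ (cf.\ \eqref{peps}--\eqref{M1}) and $|f(u)|\le M_1$; it is exactly this extra power $\beta$ or $\eta$ that upgrades the naive bound $O(|\omega_\varepsilon|)$ to $o(|\omega_\varepsilon|)$.

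The delicate point is the time-derivative term $\int_0^T\!\int_\Omega w_t V\Phi$, both because $w_t$ has only the weak regularity in \eqref{regueps1} and because a direct bound would again only give $O(|\omega_\varepsilon|)$. Here I would integrate by parts in time, exploiting that $V$ is independent of $t$:
\[
\int_0^T\!\!\int_\Omega w_t\,V\Phi=\Big[\int_\Omega w\,V\Phi\Big]_0^T-\int_0^T\!\!\int_\Omega w\,V\,\Phi_t=-\int_0^T\!\!\int_\Omega w\,V\,\Phi_t,
\]
the boundary contributions vanishing because $w(0)=0$ and $\Phi(T)=0$; the surviving integral is $o(|\omega_\varepsilon|)$ by the same product structure (one factor $|\omega_\varepsilon|^{1/2}$ from \eqref{estv1}, one factor $|\omega_\varepsilon|^{1/2+\beta}$ from \eqref{estw3}). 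This integration by parts is the main obstacle and should be justified at the level of the regularized problems \eqref{(P_n)} (or via the standard Lions--Magenes formula), after which passing to the limit is routine. Collecting all estimates yields $P-Q=o(|\omega_\varepsilon|)$, and dividing by $|\omega_\varepsilon|$ gives \eqref{I0}.
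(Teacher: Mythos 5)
Your argument is correct, and it proves \eqref{I0} from the same essential ingredients as the paper: the corrector problems \eqref{PV0eps}, the equation \eqref{PD} for $w$, the integration by parts in time exploiting $w(0)=0$, $\Phi(T)=0$ and the time-independence of the correctors (the paper's step \eqref{I7}), and, decisively, the refined estimates \eqref{estw3} and \eqref{estv2}, whose extra powers $\beta,\eta$ are what upgrade $O(|\omega_\varepsilon|)$ to $o(|\omega_\varepsilon|)$. However, your bookkeeping is genuinely different and leaner. The paper tests the $w$-equation twice, in its two equivalent forms: \eqref{PD} (coefficient $k_\varepsilon$, source $\widetilde k\chi_{\omega_\varepsilon}\nabla u$) against $v_\varepsilon^{(j)}\Phi$ in \eqref{I2}, and \eqref{PD2} (coefficient $k_0$, source $\widetilde k\chi_{\omega_\varepsilon}\nabla u^\varepsilon$) against $v^{(j)}\Phi$ in \eqref{I3}; to match the two resulting energy terms $\int k_\varepsilon\nabla w\cdot\nabla(v_\varepsilon^{(j)}\Phi)$ and $\int k_0\nabla w\cdot\nabla(v^{(j)}\Phi)$ it must first establish the separate identity \eqref{I1}, which is the longest computation in the proof. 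You instead reduce the claim at the outset, via $\nabla v^{(j)}=e_j$ from \eqref{vj}, to $P-Q=o(|\omega_\varepsilon|)$, and then pair a single testing of \eqref{PD} with $V\Phi$, $V=v_\varepsilon^{(j)}-v^{(j)}$, against a single testing of the stationary difference equation $\int_\Omega k_\varepsilon\nabla V\cdot\nabla\psi=\widetilde k\int_{\omega_\varepsilon}\partial_j\psi$ with $\psi=w\Phi$: the antisymmetric combination cancels the principal term $\int k_\varepsilon\Phi\,\nabla w\cdot\nabla V$ identically, so no analogue of \eqref{I1} is needed, and the surviving terms are exactly those the paper also estimates, with the same pairing of powers ($|\omega_\varepsilon|^{1/2}$ from \eqref{estw} or \eqref{estv1} against $|\omega_\varepsilon|^{1/2+\beta}$ or $|\omega_\varepsilon|^{1/2+\eta}$ from \eqref{estw3} or \eqref{estv2}). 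What the paper's longer route buys is adherence to the template of \cite{CV} and \cite{BCMP}; what yours buys is brevity and a transparent cancellation mechanism. Two points are worth making explicit if you write this up: the two weak formulations in \eqref{PV0eps} produce the same boundary term, and hence subtract cleanly, only because $k_\varepsilon\equiv k_0$ in a neighbourhood of $\partial\Omega$, which is guaranteed by \eqref{dist}; and the integration by parts in time against $w_t\in L^2(0,T;(H^1(\Omega))')+L^{4/3}(\Omega\times(0,T))$ does require the approximation argument you sketch --- though note the paper performs the identical step in \eqref{I7} without further comment, so your proof is not less rigorous than the original on this point.
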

\begin{proof}
We follow the ideas in \cite{BCMP} and \cite{CV}.
Since $w = u_\varepsilon - u$, then we obtain the identity
\begin{align}\nonumber
\int_\Omega k_0\nabla w\cdot \nabla \left (v^{(j)}\Phi \right) &= \int_\Omega k_0\nabla w\cdot \nabla v^{(j)}\Phi
+ \int_\Omega k_0\nabla w\cdot \nabla \Phi v^{(j)}\\ \label{V1}
&= - \int_\Omega k_0 w \nabla v^{(j)} \cdot \nabla \Phi + \int_{\partial\Omega} k_0 w n_j\Phi
+\int_\Omega k_0\nabla w\cdot \nabla \Phi v^{(j)}.
\end{align}
Moreover, we have
$$\int_0^T\!\int_\Omega k_\varepsilon\nabla w\cdot \nabla \left (v_\varepsilon^{(j)}\Phi \right)
= \int_0^T\!\int_\Omega \left (  k_\varepsilon\nabla w\cdot \nabla v_\varepsilon^{(j)}\Phi
+  k_\varepsilon\nabla w\cdot \nabla \Phi v^{(j)}
+  k_\varepsilon\nabla w\cdot \nabla \Phi (v^{(j)}_\varepsilon - v^{(j)})\right )$$
$$= \int_0^T \int_\Omega  k_\varepsilon\nabla w\cdot \nabla v_\varepsilon^{(j)}\Phi
+ \int_\Omega k_0\nabla w\cdot \nabla \Phi v^{(j)}
+  \int_\Omega  (k_\varepsilon - k_0)\nabla w\cdot \nabla \Phi v^{(j)} + \int_\Omega k_\varepsilon\nabla w\cdot \nabla \Phi (v^{(j)}_\varepsilon - v^{(j)})
$$
$$= \int_0^T \Big( - \int_\Omega k_\varepsilon w \nabla v_\varepsilon^{(j)}\cdot \nabla \Phi
+ \int_{\partial\Omega}k_0 w n_j\Phi
+ \int_\Omega k_0\nabla w\cdot \nabla \Phi v^{(j)}$$
$$+ \int_\Omega (k_\varepsilon - k_0)\nabla w\cdot \nabla \Phi v^{(j)}
+ \int_\Omega k_\varepsilon\nabla w\cdot \nabla \Phi (v^{(j)}_\varepsilon - v^{(j)}) \Big)$$
$$= \int_0^T \Big( - \int_\Omega k_\varepsilon w \nabla v^{(j)}\cdot \nabla \Phi
+ \int_{\partial\Omega}k_0 w n_j\Phi
+ \int_\Omega k_0\nabla w\cdot \nabla \Phi v^{(j)}$$
$$+ \int_{\omega_\varepsilon} (k_1 - k_0)\nabla w\cdot \nabla \Phi v^{(j)} + \int_\Omega k_\varepsilon\nabla w\cdot \nabla \Phi (v^{(j)}_\varepsilon - v^{(j)})
- \int_\Omega k_\varepsilon w \nabla (v^{(j)}_\varepsilon - v^{(j)})\cdot \nabla \Phi \Big).
$$
A combination with \eqref{V1} gives
$$\int_0^T\int_\Omega k_\varepsilon\nabla w\cdot \nabla (v_\varepsilon^{(j)}\Phi )
= \int_0^T \Big( \int_\Omega k_0\nabla w\cdot \nabla  (v^{(j)}\Phi )  +  \int_\Omega(k_0 - k_\varepsilon) w \nabla v^{(j)}\cdot \nabla \Phi$$
$$+ \int_{\omega_\varepsilon} (k_1 - k_0)\nabla w\cdot \nabla \Phi v^{(j)} + \int_\Omega k_\varepsilon\nabla w\cdot \nabla \Phi (v^{(j)}_\varepsilon - v^{(j)})
- \int_\Omega k_\varepsilon w\cdot \nabla (v^{(j)}_\varepsilon - v^{(j)})\cdot \nabla \Phi \Big)$$
$$= \int_0^T \Big( \int_\Omega k_0\nabla w\cdot \nabla \left (v^{(j)}\Phi \right) + \int_{\omega_\varepsilon} (k_1 - k_0)\nabla w\cdot \nabla \Phi v^{(j)}
$$
$$+  \int_{\omega_\varepsilon}(k_0 - k_1) w \nabla v^{(j)}\cdot \nabla \Phi + \int_\Omega k_\varepsilon\nabla w\cdot \nabla \Phi (v^{(j)}_\varepsilon - v^{(j)})
- \int_\Omega k_\varepsilon w \nabla (v^{(j)}_\varepsilon - v^{(j)})\cdot \nabla \Phi \Big).
$$
Then, on account of \eqref{estimate3}, \eqref{estw}, \eqref{estv1}, \eqref{estv2} and Schwarz inequality, we get
\begin{align}\label{I1}
\int_0^T\int_\Omega k_\varepsilon\nabla w\cdot \nabla  (v_\varepsilon^{(j)}\Phi )
= \int_0^T \left( \int_\Omega k_0\nabla w\cdot \nabla (v^{(j)}\Phi ) - \int_{\omega_\varepsilon} \widetilde k\nabla w\cdot \nabla \Phi v^{(j)}\right )
 +o(|\omega_\varepsilon|).
\end{align}
Let us consider now problem \eqref{PD}. Multiplying both sides of the first equation by $v_\varepsilon^{(j)}\Phi$, and integrating by parts
on $\Omega\times (0,T)$, we obtain
\begin{align}\nonumber
\int_0^T\int_\Omega w_tv_\varepsilon^{(j)}\Phi + \int_0^T\int_\Omega k_\varepsilon\nabla w \cdot \nabla (v_\varepsilon^{(j)}\Phi)
+ \int_0^T\int_\Omega\chi_{\Omega / \omega_\varepsilon} p_\varepsilon w v_\varepsilon^{(j)}\Phi\\ \label{I2}
= \int_0^T\int_{\omega_\varepsilon} \widetilde k \nabla u\cdot\nabla(v_\varepsilon^{(j)}\Phi) + \int_0^T\int_{\omega_\varepsilon} f(u)v_\varepsilon^{(j)}\Phi.
\end{align}
On the other hand, multiplying the first equation in \eqref{PD2} by $v^{(j)}\Phi$ and integrating by parts on $\Omega\times (0,T)$, we get
\begin{align}\nonumber
\int_0^T\int_\Omega w_t v^{(j)}\Phi + \int_0^T\int_\Omega k_0\nabla w \cdot\nabla(v^{(j)}\Phi)
+ \int_0^T\int_\Omega\chi_{\Omega / \omega_\varepsilon} p_\varepsilon w v^{(j)}\Phi\\ \label{I3}
=  \int_0^T\int_{\omega_\varepsilon} \widetilde k \nabla u^\varepsilon \cdot\nabla(v^{(j)}\Phi) + \int_0^T\int_{\omega_\varepsilon} f(u) v^{(j)}\Phi.
\end{align}
A combination of \eqref{I1}, \eqref{I2} and \eqref{I3} gives, for $\varepsilon \to 0$,
$$
\int_0^T\int_{\omega_\varepsilon} \widetilde k \nabla u\cdot\nabla(v_\varepsilon^{(j)}\Phi) + \int_0^T\int_{\omega_\varepsilon} f(u)v_\varepsilon^{(j)}\Phi
- \int_0^T\int_\Omega w_tv_\varepsilon^{(j)}\Phi
- \int_0^T\int_\Omega\chi_{\Omega / \omega_\varepsilon} p_\varepsilon w v_\varepsilon^{(j)}\Phi$$
$$=  \int_0^T\int_{\omega_\varepsilon} \widetilde k \nabla u^\varepsilon \cdot\nabla(v^{(j)}\Phi) + \int_0^T\int_{\omega_\varepsilon} f(u) v^{(j)}\Phi
- \int_0^T\int_\Omega w_t v^{(j)}\Phi
- \int_0^T\int_\Omega\chi_{\Omega / \omega_\varepsilon} p_\varepsilon w v^{(j)}\Phi$$
$$- \int_0^T\int_{\omega_\varepsilon} \tilde k\nabla w\cdot \nabla \Phi v^{(j)}
 +o(|\omega_\varepsilon|),
$$
from which we deduce
$$
\int_0^T\int_{\omega_\varepsilon} \widetilde k \nabla u\cdot\nabla(v_\varepsilon^{(j)}\Phi)
=  \int_0^T\int_\Omega w_t(v_\varepsilon^{(j)} - v^{(j)})\Phi
+ \widetilde k \int_0^T\int_{\omega_\varepsilon} \Big ( \nabla u^\varepsilon \cdot\nabla(v^{(j)}\Phi)
- \nabla u^\varepsilon \cdot \nabla \Phi v^{(j)}\Big )$$
$$+ \int_0^T\int_{\omega_\varepsilon} \widetilde k\nabla u\cdot \nabla \Phi v^{(j)}
- \int_0^T\int_\Omega\chi_{\Omega / \omega_\varepsilon} p_\varepsilon w (v_\varepsilon^{(j)} - v^{(j)})\Phi
+ \int_0^T\int_{\omega_\varepsilon} f(u) (v^{(j)} - v_\varepsilon^{(j)})\Phi
+ o(|\omega_\varepsilon|).
$$
By means of  \eqref{estw2}, \eqref{estw3}, \eqref{estv1} and \eqref{estv2}, and recalling also \eqref{M1} and \eqref{M3},
an application of the H\"{o}lder inequality both in space and time gives, for $\varepsilon \to 0$,
$$
\widetilde k \int_0^T\int_{\omega_\varepsilon}  \nabla u\cdot\nabla(v_\varepsilon^{(j)}\Phi)
=  \int_0^T\int_\Omega w_t(v_\varepsilon^{(j)} - v^{(j)})\Phi
+ \widetilde k \int_0^T\int_{\omega_\varepsilon} \big ( \nabla u^\varepsilon \cdot\nabla v^{(j)}\Phi
+ \nabla u \cdot\nabla\Phi v^{(j)}\big )
 +o(|\omega_\varepsilon|)
$$
and then
\begin{align}\nonumber
\widetilde k \int_0^T\int_{\omega_\varepsilon}  \nabla u\cdot\nabla v_\varepsilon^{(j)}\Phi
=  \int_0^T\int_\Omega w_t(v_\varepsilon^{(j)} - v^{(j)})\Phi \\ \nonumber
+ \widetilde k \int_0^T\int_{\omega_\varepsilon} \nabla u^\varepsilon \cdot\nabla v^{(j)}\Phi
+ \widetilde k \int_0^T\int_{\omega_\varepsilon} \nabla u \cdot\nabla\Phi (v^{(j)}- v_\varepsilon^{(j)})
+o(|\omega_\varepsilon|) \\ \label{I6}
=  \int_0^T\int_\Omega w_t(v_\varepsilon^{(j)} - v^{(j)})\Phi
+ \widetilde k \int_0^T\int_{\omega_\varepsilon} \nabla u^\varepsilon \cdot\nabla v^{(j)}\Phi
+o(|\omega_\varepsilon|).
\end{align}
Consider the first term in the last line of \eqref{I6}. Integrating by parts in time and recalling that
$\Phi(T) = 0$, $w(0) = 0$, $(v^{(j)}- v_\varepsilon^{(j)})_t = 0$, we finally have
(cf. also \eqref{estw3}, \eqref{estv2}), for $\varepsilon \to 0$,
\begin{eqnarray}\label{I7}
& \displaystyle \int_0^T\int_\Omega w_t(v_\varepsilon^{(j)} - v^{(j)})\Phi =
\int_\Omega [w(v_\varepsilon^{(j)} - v^{(j)})\Phi](T)
- \int_\Omega [w(v_\varepsilon^{(j)} - v^{(j)})\Phi](0) \\ \nonumber
&-  \displaystyle  \int_0^T\int_\Omega w(v_\varepsilon^{(j)} - v^{(j)})_t\Phi  -  \int_0^T\int_\Omega w(v_\varepsilon^{(j)} - v^{(j)})\Phi _t
=  -  \int_0^T\int_\Omega w(v_\varepsilon^{(j)} - v^{(j)})\Phi _t = o(|\omega_\varepsilon|).
\end{eqnarray}
Combining \eqref{I6} and \eqref{I7} we get
\begin{align}
\widetilde k \int_0^T\int_{\omega_\varepsilon}  \nabla u\cdot\nabla v_\varepsilon^{(j)}\Phi
=  \widetilde k \int_0^T\int_{\omega_\varepsilon} \nabla u^\varepsilon \cdot\nabla v^{(j)}\Phi
+o(|\omega_\varepsilon|), \quad \varepsilon \to 0,
\end{align}
then formula \eqref{I0} is true.
\end{proof}

{\it Proof of Theorem \ref{expansion}.}
Following \cite[Sec.3]{CV}, there exist a regular Borel measure $\mu$, a symmetric matrix $\mathcal{M}$ with elements
$\mathcal{M}_{ij} \in L^2(\Omega, d\mu)$, a sequence $\omega_{\varepsilon_n}$ with $|\omega_{\varepsilon_n}| \to 0$ such that
\begin{equation}\label{L1}
|\omega_{\varepsilon_n}|^{-1}\chi_{\omega_{\varepsilon_n}} dx \to d\mu,
\quad \quad |\omega_{\varepsilon_n}|^{-1}\chi_{\omega_{\varepsilon_n}} \frac{\partial v_{\varepsilon_n}^{(j)}}{\partial x_i} dx \to \mathcal{M}_{ij} d\mu,
\end{equation}
in the weak$^*$ topology of $C^0(\overline\Omega)$.
On account of  \eqref{bounduc2}, we deduce also
\begin{equation}\label{L2}
|\omega_{\varepsilon_n}|^{-1}\chi_{\omega_{\varepsilon_n}} \frac{\partial u(t)}{\partial x_i}\frac{\partial v_{\varepsilon_n}^{(j)}}{\partial x_i} dx
\to \mathcal{M}_{ij}\frac{\partial u(t)}{\partial x_i} d\mu, \quad \forall \, t \in (0,T),
\end{equation}
in the weak$^*$ topology of $C^0(\overline\Omega)$.
Moreover, recalling \eqref{bounduc2}, \eqref{estw3} and \eqref{vj}, we get
\begin{align}\label{L3}
\left | \int_0^T\int_\Omega\frac{\chi_{\omega_{\varepsilon_n}}}{|\omega_{\varepsilon_n}|} \frac{\partial u^{\varepsilon_n}}{\partial x_i}
\frac{\partial v^{(j)}}{\partial x_i} \right |
\leq C,
\end{align}
where $C$ is independent of $\varepsilon_n$.
Hence
\begin{equation}\label{L4}
|\omega_{\varepsilon_n}|^{-1}\chi_{\omega_{\varepsilon_n}} \frac{\partial u^{\varepsilon_n}}{\partial x_i} \frac{\partial v^{(j)}}{\partial x_i} dxdt \to d\nu_j
\end{equation}
in the weak$^*$ topology of $C^0(\overline\Omega \times [0,T])$.
Combining \eqref{I0}, \eqref{L2} and \eqref{L4} we obtain
\begin{equation}\label{L5}
d\nu_j = \mathcal{M}_{ij}\frac{\partial u(t)}{\partial x_i} d\mu, \quad \forall \, t \in (0,T).
\end{equation}

Now multiply the first equation in \eqref{(PPhi)} by $w$ and the first equation in \eqref{PD2} by $\Phi$ on $\Omega\times (0,T)$.


Then, integrating by parts, we get
$$
\int_0^T\int_\Omega\Phi_tw + \int_0^T\int_\Omega k_0\nabla \Phi \cdot \nabla w  - \int_0^T\int_\Omega f^\prime(u)\Phi w
+\int_0^T\int_{\partial\Omega} k_0\frac{\partial\Phi}{\partial n}w= 0,
$$
$$
\int_0^T\int_\Omega w_t\Phi  + \int_0^T\int_\Omega k_0\nabla w \cdot \nabla \Phi + \int_0^T\int_\Omega\chi_{\Omega / \omega_\varepsilon} p_\varepsilon w \Phi
=   \int_0^T\int_{\omega_\varepsilon} \widetilde k \nabla u^\varepsilon \cdot \nabla \Phi +  \int_0^T\int_{\omega_\varepsilon} f(u)\Phi.
$$
Summing up the two previous equations, we have
\begin{align}\nonumber
\int_0^T\int_\Omega (w_t\Phi + \Phi_tw) - \int_0^T\int_\Omega f^\prime(u)\Phi w
+\int_0^T\int_{\partial\Omega} k_0\frac{\partial\Phi}{\partial n}w + \int_0^T\int_\Omega\chi_{\Omega / \omega_\varepsilon} p_\varepsilon w \Phi
\\ \label{wPhi3}
=   \int_0^T\int_{\omega_\varepsilon} \widetilde k \nabla u^\varepsilon \cdot \nabla \Phi +  \int_0^T\int_{\omega_\varepsilon} f(u)\Phi.
\end{align}
Observe that the following identities hold
\begin{equation}
\int_0^T\int_\Omega (w_t\Phi + \Phi_tw) = \int_\Omega\left ( \Phi(T)w(T) - \Phi(0)w(0)\right ) - \int_0^T\int_\Omega \Phi w_t +   \int_0^T\int_\Omega \Phi w_t =0,
\end{equation}
and then, from \eqref{wPhi3} we infer
\begin{equation} \label{wPhi4}
\int_0^T\int_\Omega \big ( \chi_{\Omega / \omega_\varepsilon} p_\varepsilon w \Phi - f^\prime(u)\Phi w \big)
+\int_0^T\int_{\partial\Omega} k_0\frac{\partial\Phi}{\partial n}w
=   \int_0^T\int_{\omega_\varepsilon} \widetilde k \nabla u^\varepsilon \cdot  \nabla \Phi +  \int_0^T\int_{\omega_\varepsilon} f(u)\Phi.
\end{equation}
Moreover, on account of \eqref{estw3}, we have
\begin{align}\nonumber
\int_0^T\int_\Omega \big ( \chi_{\Omega / \omega_\varepsilon} p_\varepsilon w \Phi - f^\prime(u)\Phi w \big )
&=\int_0^T\int_\Omega\big ( \chi_{\Omega / \omega_\varepsilon} p_\varepsilon w \Phi - \chi_{\Omega / \omega_\varepsilon} f^\prime(u)\Phi w \big )
- \int_0^T\int_{\omega_\varepsilon} f^\prime(u)\Phi w\\ \label{wPhi5}
&= \int_0^T\int_\Omega\chi_{\Omega / \omega_\varepsilon} (p_\varepsilon -  f^\prime(u)) w \Phi + o(|\omega_\varepsilon|) = o(|\omega_\varepsilon|).
\end{align}
The last equality in \eqref{wPhi5} is a consequence of the regularity of $f$ (see \eqref{peps}, from which $|p_\varepsilon - f^\prime(u)| \leq C|w|$ follows)
and \eqref{estw3}.
Combining \eqref{wPhi4} and \eqref{wPhi5}
we obtain
$$
\int_0^T\int_{\partial\Omega} k_0\frac{\partial\Phi}{\partial n}w
= |\omega_\varepsilon|\int_0^T\int_\Omega \widetilde k |\omega_\varepsilon|^{-1}\chi_{\omega_{\varepsilon}}\nabla u^\varepsilon \cdot  \nabla \Phi
+ |\omega_\varepsilon| \int_0^T\int_\Omega \chi_{\omega_\varepsilon}|\omega_\varepsilon|^{-1} f(u)\Phi
+ o(|\omega_{\varepsilon_n}|).
$$
And finally, by means of \eqref{L1}, \eqref{L4} and \eqref{L5}, the formula \eqref{wPhi8} holds. \hfill $\Box$
\begin{remark}
We would like to emphasize that, with minor changes, the asymptotic expansion extends  to  the case of piecewise smooth anisotropic conductivities
of the form
\begin{equation}\label{anisotropy1}
\mathbb{K}_{\varepsilon} = \left\{
 \begin{array}{rl}
 \mathbb{K}_0 & \mbox{in } \ \Omega \setminus \omega_{\varepsilon} \\
 \mathbb{K}_1 & \mbox{in } \  \omega_{\varepsilon}
 \end{array}
 \right.
\end{equation}
where $\mathbb{K}_0, \mathbb{K}_1\in C^{\infty}(\Omega)$ are symmetric matrix valued functions satisfying
\[
\alpha_0|{\bf\xi}|^2\leq{\bf\xi}^T\mathbb{K}_0(x){\bf\xi}\leq \beta_0 |{\bf\xi}|^2, \quad
\alpha_1|\xi|^2\leq{\bf\xi}^T\mathbb{K}_1(x){\bf\xi}\leq \beta_1|{\bf\xi}|^2, \quad \forall \, {\bf\xi}\in\mathbb{R}^3, \forall \, x \in\Omega,
\]
with $0<\alpha_1<\beta_1< \alpha_0 < \beta_0$. Then, the asymptotic formula \eqref{wPhi8} becomes
\[
\int_0^T\int_{\partial\Omega} \mathbb{K}_0\nabla \Phi \cdot n (u^\varepsilon - u) =
|\omega_\varepsilon|\int_0^T\int_{\Omega}\Big( \mathcal{M}_{i\,j}(\mathbb{K}_0-\mathbb{K}_1)_{ik}\frac{\partial u}{\partial x_k}\frac{\partial \Phi}{\partial x_j}
+f(u)\Phi \Big )d\mu  +o(|\omega_\varepsilon|)\,
\]
where $\Phi$ solves
\begin{equation}\label{anisotropy2}
		\begin{cases}
			& \Phi_t + {\rm div} (\mathbb{K}_0 \nabla \Phi) - f^\prime(u)\Phi  = 0,\ \ \ {\rm in} \ \ \Omega\times (0,T),\\
			& \Phi(T) = 0, \ \ \ {\rm in} \ \ \Omega,
			\hskip2truecm
		\end{cases}
	\end{equation}
and $u$ is the background solution of
\begin{equation}\label{anisotropy3}
		\begin{cases}
			& u_t - {\rm div} (\mathbb{K}_0 \nabla u) + f(u)  = 0,\ \ \ {\rm in} \ \ \Omega\times (0,T),\\
            & \mathbb{K}_0\nabla u \cdot n = 0, \ \ \ {\rm on} \ \ \partial \Omega \times (0,T), \\
			& u(0) = 0, \ \ \ {\rm in} \ \ \Omega.
			\hskip2truecm
		\end{cases}
	\end{equation}
The matrix $\mathcal{M}$ is called  the polarization tensor associated to the inhomogeneity $\omega_\varepsilon$.
Indeed, all the results of the previous sections can be extended to the case of constant anisotropic coefficients using for instance
the regularity results contained in \cite{LSU}.
\end{remark}

\section{A reconstruction algorithm}

\setcounter{equation}{0}

We now use the asymptotic expansion derived in the previous section to set a reconstruction procedure
for the inverse problem of detecting a spherical inhomogeneity $\omega_{\varepsilon}$ from boundary measurements of the potential.
Following the approach of \cite{BerettaManzoniRatti},
\cite{art:cmm}, but taking into account the time-dependence of the problem, we introduce the mismatch functional
\begin{equation}
J(\omega_\varepsilon) = \frac{1}{2} \int_0^T \int_{\partial \Omega}(u^\varepsilon - u_{meas})^2 ,
\label{eq:J}
\end{equation}
being $u^\varepsilon$ the solution of the perturbed problem \eqref{perturbed} in presence of an inclusion $\omega_{\varepsilon}$ satisfying hypotheses
\eqref{dist}, \eqref{smalldim}.
It is possible to reformulate the inverse problem in terms of the following minimization problem
\begin{equation}
	J(\omega_\varepsilon) \rightarrow \min
\label{eq:minprob}
\end{equation}
among all the small inclusions, well separated from the boundary. We introduce the following additional assumption on the exact inclusion
\begin{equation}
\omega_\varepsilon = z + \varepsilon B = \{x \in \Omega \text{ s.t. } x = z+\varepsilon b,\ b \in B\},
\label{eq:omegaez}
\end{equation}
being $z \in \Omega$ and $B$ an open, bounded, regular set containing the origin. We remark that we prescribe the geometry of the inclusion to be fixed
throughout the whole observation time.
The restriction of the functional $J$ to the class of inclusions satisfying \eqref{eq:omegaez} is denoted by $j(\varepsilon;z)$.
We can now define the \textit{topological gradient} $G: \Omega \rightarrow \R$ as the first order term appearing in the asymptotic expansion of the cost
functional with respect to $\varepsilon$, namely
\[
	j(\varepsilon;z) = j(0) + |\omega_\varepsilon| G(z) + o(|\omega_\varepsilon|), \quad \varepsilon \rightarrow 0,
\]
where $j(0) = \int_0^T \int_{\partial \Omega}(u - u_{meas})^2 $ and $u$ is the solution of the unperturbed problem \eqref{(P_0)}.
Under the assumption that the exact inclusion has small size and satisfies hypothesis
\eqref{eq:omegaez}, a reconstruction procedure consists in identifying the point $\bar{z} \in \Omega$ where the topological gradient $G$ attaints its minimum.
Indeed, the cost functional achieves the smallest value when it is evaluated in the center of the exact inclusion. Thanks to the hypothesis of small size, we expect the reduction of the cost functional $j$ to be correctly described by the first order term $G$, up to a reminder which is negligible with respect to $\varepsilon$.
\par
Nevertheless, in order to define a reconstruction algorithm, we need to efficiently evaluate the topological gradient $G$. According to the definition,
\[
	G(z) = \lim_{\varepsilon \rightarrow 0} \frac{j(\varepsilon;z)-j(0)}{|\omega_\varepsilon|}.
\]
Evaluating $G$ in a single point $z\in \Omega$ would require to solve the direct problem several times in presence of inclusions centered at $z$
with decreasing volume.
This procedure can be indeed avoided thanks to a useful \textit{representation formula} that can be deduced from the asymptotic expansion \eqref{wPhi8}.
To show this we need the following preliminary Proposition the proof of which is given in the Appendix.
\begin{proposition}\label{boundaryNorm}
	Consider the problem
	\begin{equation}\label{IR1}
		\begin{cases}
			& \Phi_t + k_0\Delta \Phi - f^\prime(u)\Phi  = 0,\ \ \ {\rm in} \ \ \Omega\times (0,T),\\
			& \displaystyle\frac{\partial \Phi}{\partial n}  = u^\varepsilon - u, \ \ \  {\rm on } \ \ \partial\Omega \times (0,T), \\
			& \Phi(T) = 0, \ \ \ {\rm in} \ \ \Omega.
			\hskip2truecm
		\end{cases}
	\end{equation}
	Given a compact set $K \subset \Omega$ such that $d(K,\partial \Omega) \geq d_0 > 0$ the following estimate holds
	\begin{equation}
		\|\Phi\|_{L^1(0,T;W^{1,\infty}(K))} \leq C \|u^\varepsilon - u \|_{L^2(0,T;L^2(\partial \Omega))}.
	\label{eq:infEst}
	\end{equation}
\end{proposition}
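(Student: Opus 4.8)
The plan is to exploit the smoothing effect of the parabolic operator: although the Neumann datum $u^\varepsilon-u$ is controlled only in $L^2(0,T;L^2(\partial\Omega))$, the solution $\Phi$ of \eqref{IR1} is much more regular in the interior, away from $\partial\Omega$, where it satisfies a \emph{homogeneous} equation with the smooth coefficient $f^\prime(u)$. First I would reverse time, setting $\Psi(x,t):=\Phi(x,T-t)$, which turns \eqref{IR1} into the forward problem
\begin{equation*}
\begin{cases}
\Psi_t - k_0\Delta\Psi + c\,\Psi = 0, & \text{in } \Omega\times(0,T),\\
\dfrac{\partial\Psi}{\partial n} = g, & \text{on } \partial\Omega\times(0,T),\\
\Psi(0)=0, & \text{in } \Omega,
\end{cases}
\end{equation*}
where $c(x,t):=f^\prime(u(x,T-t))$ and $g(x,t):=(u^\varepsilon-u)(x,T-t)$, so that $\|g\|_{L^2(0,T;L^2(\partial\Omega))}=\|u^\varepsilon-u\|_{L^2(0,T;L^2(\partial\Omega))}$. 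The coefficient $c$ is bounded by $M_2$ (cf. \eqref{M1}) and, since $u\in C^{2+\alpha,1+\alpha/2}$ by Theorem \ref{regu} and $f$ is a polynomial, it is as smooth as needed for the bootstrap below; this also gives well-posedness of the forward problem.

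Next I would derive the basic energy estimate. Multiplying the equation by $\Psi$ and integrating by parts produces the boundary term $k_0\int_{\partial\Omega}g\,\Psi$; using the trace interpolation inequality $\|\Psi\|_{L^2(\partial\Omega)}^2\le\delta\|\nabla\Psi\|_{L^2(\Omega)}^2+C_\delta\|\Psi\|_{L^2(\Omega)}^2$ (valid since $\Omega\in C^{2+\alpha}$) together with Young's inequality, the gradient contribution is absorbed into the left-hand side, while $\int_\Omega c\,\Psi^2$ is bounded by $M_2\|\Psi\|_{L^2(\Omega)}^2$. Since $\Psi(0)=0$, Gronwall's Lemma then yields
\begin{equation*}
\|\Psi\|_{L^\infty(0,T;L^2(\Omega))}+\|\Psi\|_{L^2(0,T;H^1(\Omega))}\le C\,\|g\|_{L^2(0,T;L^2(\partial\Omega))},
\end{equation*}
with $C$ depending only on $k_0,M_2,T,\Omega$.

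The core of the argument, and the main obstacle, is to upgrade this $H^1$ bound into an \emph{interior} $W^{1,\infty}$ bound. Fix a compact set $K'$ with $K\subset\subset K'\subset\subset\Omega$ (compactly contained); on $K'$ the datum $g$ does not enter and $\Psi$ solves the homogeneous equation $\Psi_t-k_0\Delta\Psi+c\,\Psi=0$ with smooth $c$. Extending $\Psi$ by zero for $t<0$ (admissible because $\Psi(0)=0$) makes $t=0$ an interior time, so that interior parabolic regularity applies on the whole strip. Iterating the interior estimates — first $\Psi\in L^2(0,T;H^2_{\mathrm{loc}})$, then, differentiating the equation and using $c\in C^{2+\alpha}$, up to $\Psi\in L^2(0,T;H^3(K))$ — one obtains
\begin{equation*}
\|\Psi\|_{L^2(0,T;H^3(K))}\le C\,\|\Psi\|_{L^2(0,T;H^1(K'))}\le C\,\|\Psi\|_{L^2(0,T;H^1(\Omega))}.
\end{equation*}
Here I would invoke the interior regularity theory of Ladyzhenskaya--Solonnikov--Ural'tseva \cite{LSU} (equivalently, local boundedness combined with interior Schauder estimates). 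The delicate points are checking that every constant depends only on $\mathrm{dist}(K,\partial\Omega)\ge d_0$ and on $\|c\|$, and that no regularity is lost at the time endpoints — both handled by the zero initial datum and by the fact that forward parabolic regularity is unobstructed at the final time.

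Finally, since $n=3$ the Sobolev embedding $H^3(K)\hookrightarrow W^{1,\infty}(K)$ holds, and on the finite interval one has $\|\cdot\|_{L^1(0,T)}\le T^{1/2}\|\cdot\|_{L^2(0,T)}$. Chaining these with the two displayed estimates and undoing the time reversal gives
\begin{equation*}
\|\Phi\|_{L^1(0,T;W^{1,\infty}(K))}=\|\Psi\|_{L^1(0,T;W^{1,\infty}(K))}\le C\,\|\Psi\|_{L^2(0,T;H^3(K))}\le C\,\|u^\varepsilon-u\|_{L^2(0,T;L^2(\partial\Omega))},
\end{equation*}
which is exactly \eqref{eq:infEst}. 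The use of $L^1$ rather than $L^2$ in time in the statement is thus harmless, being the weaker conclusion on a bounded interval.
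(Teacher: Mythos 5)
Your proposal is correct, and it follows the same skeleton as the paper's proof: reverse time to get a forward problem with zero initial datum, derive the $L^\infty(0,T;L^2(\Omega))\cap L^2(0,T;H^1(\Omega))$ bound by testing with the solution and applying Gronwall (the paper's \eqref{IR12bis}--\eqref{IR14}), upgrade to an interior $L^2(0,T;H^3(K))$ estimate, and conclude via $H^3(K)\hookrightarrow W^{1,\infty}(K)$ in three dimensions plus $L^2(0,T)\subset L^1(0,T)$. Where you diverge is in the execution of the crucial interior step. The paper proves the $H^3$ bound by hand: it introduces two nested cutoff functions $\xi_1,\xi_2$, tests the equation with $-\Delta Z$ and then with $-\Delta Z_t$, and recovers the third-order derivatives through the pointwise identity \eqref{thirdOrd}, $\nabla\Delta Z = \frac{1}{k_0}\left(\nabla Z_t + Z f''(u)\nabla u + f'(u)\nabla Z\right)$; this keeps the argument self-contained and makes the dependence of every constant on $k_0$, $M_2$, $\Omega$, $T$ and the cutoffs (hence on $d_0$) explicit. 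You instead invoke interior parabolic regularity (Ladyzhenskaya--Solonnikov--Ural'tseva) together with a differentiation bootstrap, and handle the time endpoint $t=0$ by the zero-extension trick --- a clean alternative to the paper's device of simply integrating the energy identities from $t=0$ with vanishing initial data. Both routes are sound: yours is shorter and conceptually transparent, but it outsources precisely the point you yourself flag as delicate (uniformity of the interior-regularity constants in terms of ${\rm dist}(K,\partial\Omega)$ and the coefficient norms), which the paper's explicit cutoff computation settles by construction; note also that your bootstrap only needs $c\in W^{1,\infty}$ in space, matching the paper's use of $f''(u)\nabla u$, so the regularity supplied by Theorem \ref{regu} is more than enough. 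Your careful writing of the time-reversed coefficient as $c(x,t)=f'(u(x,T-t))$ and datum $g(x,t)=(u^\varepsilon-u)(x,T-t)$ is in fact slightly more precise than the paper's statement of \eqref{IR1bis}.
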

On account of Proposition \ref{boundaryNorm}, we deduce the following representation of the topological gradient
\begin{proposition}
The topological gradient of the cost functional $j(\varepsilon, z)$ can be expressed by
\begin{equation}
	G(z) = \int_0^T  \left( \widetilde k \mathcal{M} \nabla u(z) \cdot \nabla W(z)  +  f(u(z))W(z) \right),
\label{eq:repForm}
\end{equation}
where $W$ is the solution of the \textit{adjoint problem}:
\begin{equation}
\begin{cases}
& W_t + k_0\Delta W  - f^\prime(u)W  = 0,\ \ \ {\rm in} \ \ \Omega\times (0,T),\\
& \displaystyle \frac{\partial W}{\partial n} = u - u_{meas}, \ \ \ {\rm on} \ \ \partial \Omega \times (0,T), \\
& W(T) = 0, \ \ \ {\rm in} \ \ \Omega.
\end{cases}
\label{eq:adjoint}
\end{equation}
	\label{prop:repForm}
\end{proposition}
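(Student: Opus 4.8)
The strategy is to expand the reduced functional $j(\varepsilon;z)$ to first order in $|\omega_\varepsilon|$, to read off the linear term from the asymptotic formula \eqref{wPhi8} evaluated at the adjoint state, and to dispose of the remaining boundary term by a duality argument based on Proposition \ref{boundaryNorm}. Writing $w = u^\varepsilon - u$ and using $a^2-b^2=(a-b)(a+b)$, I would first split
\begin{align*}
j(\varepsilon;z)-j(0) &= \frac12\int_0^T\int_{\partial\Omega}\big[(u^\varepsilon-u_{meas})^2-(u-u_{meas})^2\big]\\
&= \int_0^T\int_{\partial\Omega} w\,(u-u_{meas}) + \frac12\int_0^T\int_{\partial\Omega} w^2 ,
\end{align*}
so that the claim reduces to showing that the first integral equals $|\omega_\varepsilon|\,G(z)+o(|\omega_\varepsilon|)$ with $G$ as in \eqref{eq:repForm}, and that the second is $o(|\omega_\varepsilon|)$.

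For the linear term I would apply the expansion \eqref{wPhi8} with $\Phi$ taken to be the adjoint state $W$ of \eqref{eq:adjoint}, whose boundary datum $u-u_{meas}$ is independent of $\varepsilon$; up to the normalization of the conormal derivative by $k_0$, the left-hand side of \eqref{wPhi8} is then exactly $\int_0^T\int_{\partial\Omega} w\,(u-u_{meas})$, and the right-hand side gives
\[
|\omega_\varepsilon|\Big\{\int_0^T\!\!\int_\Omega \widetilde k\,\mathcal{M}\nabla u\cdot\nabla W\,d\mu + \int_0^T\!\!\int_\Omega f(u)\,W\,d\mu + o(1)\Big\}.
\]
Since the inclusions form the self-similar family $\omega_\varepsilon = z+\varepsilon B$ of \eqref{eq:omegaez}, the construction of \cite{CV} identifies the limiting measure as the Dirac mass $\mu=\delta_z$ and $\mathcal{M}$ as the constant polarization tensor of $B$, so the interior integrals collapse to evaluation at the point $z$ (legitimate because $u$ and $W$ are $C^1$ near $z$ by interior parabolic regularity). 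Dividing by $|\omega_\varepsilon|$ and letting $\varepsilon\to0$ then yields precisely \eqref{eq:repForm}.

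The main obstacle is the quadratic boundary term, since the energy estimates only give, through the trace inequality, $\|w\|_{L^2(0,T;L^2(\partial\Omega))}=O(|\omega_\varepsilon|^{1/2})$, so that $\int_0^T\int_{\partial\Omega} w^2$ is a priori only $O(|\omega_\varepsilon|)$ and not negligible. To sharpen this I would introduce the solution $\Phi$ of \eqref{IR1} with the $\varepsilon$-dependent Neumann datum $w$ itself and pair it with \eqref{PD2}: multiplying \eqref{PD2} by $\Phi$ and \eqref{IR1} by $w$, integrating over $\Omega\times(0,T)$ and adding, the cross terms $k_0\nabla w\cdot\nabla\Phi$ cancel and the time-derivative terms vanish because $\Phi(T)=0$ and $w(0)=0$, leaving
\[
k_0\int_0^T\int_{\partial\Omega} w^2 = \int_0^T\int_{\omega_\varepsilon}\widetilde k\,\nabla u^\varepsilon\cdot\nabla\Phi + \int_0^T\int_{\omega_\varepsilon} f(u)\,\Phi - \int_0^T\int_\Omega\big(\chi_{\Omega\backslash\omega_\varepsilon}p_\varepsilon-f'(u)\big)w\,\Phi .
\]
The first two integrals are confined to the interior set $\omega_\varepsilon$, where the $W^{1,\infty}(K)$ bound \eqref{eq:infEst} of Proposition \ref{boundaryNorm} controls $\Phi$ by $C\|w\|_{L^2(0,T;L^2(\partial\Omega))}$ and $\nabla u^\varepsilon$ is handled as in \eqref{estw2}; for the last integral one uses $|p_\varepsilon-f'(u)|\le C|w|$ together with the refined bound \eqref{estw3} and the uniform bound \eqref{boundu} on $w$. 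This gives $k_0\int_0^T\int_{\partial\Omega}w^2\le C\,|\omega_\varepsilon|^{1/2+\beta}\,\|w\|_{L^2(0,T;L^2(\partial\Omega))}$; absorbing one factor yields $\|w\|_{L^2(0,T;L^2(\partial\Omega))}\le C|\omega_\varepsilon|^{1/2+\beta}$, hence $\int_0^T\int_{\partial\Omega} w^2 = O(|\omega_\varepsilon|^{1+2\beta})=o(|\omega_\varepsilon|)$. Combining the two steps and passing to the limit in $(j(\varepsilon;z)-j(0))/|\omega_\varepsilon|$ closes the proof; the genuinely delicate points are this absorption step, which crucially relies on the interior estimate \eqref{eq:infEst}, and the identification of $\mu$ and $\mathcal{M}$ for the self-similar family, both resting on the separation \eqref{dist} of the inclusion from the boundary.
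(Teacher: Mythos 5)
Your overall strategy coincides with the paper's: the same splitting of $j(\varepsilon;z)-j(0)$ into a linear and a quadratic boundary term, the same treatment of the linear term (apply \eqref{wPhi8} with $\Phi=W$, then use that for the self-similar family \eqref{eq:omegaez} the measure $\mu$ is $\delta_z$ and $\mathcal{M}$ the polarization tensor of $B$, citing \cite{CV}), and the same reliance on Proposition \ref{boundaryNorm} for the quadratic term. Where you genuinely diverge is in how the quadratic term is killed. The paper applies \eqref{wPhi8} a \emph{second} time, with $\Phi$ the solution of \eqref{IR1} whose Neumann datum is the $\varepsilon$-dependent function $u^\varepsilon-u$; the interior integrals then collapse to point values at $z$, which are bounded by \eqref{eq:infEst}, and the trace inequality combined with \eqref{estw} gives $\|u^\varepsilon-u\|_{L^2(0,T;L^2(\partial\Omega))}\leq C|\omega_\varepsilon|^{1/2}$, so the term is $O(|\omega_\varepsilon|^{3/2})$ with no absorption needed. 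Your route — writing the exact duality identity and bootstrapping $\|w\|_{L^2(0,T;L^2(\partial\Omega))}$ up to $C|\omega_\varepsilon|^{1/2+\beta}$ — is more involved and yields the weaker (but sufficient) rate $O(|\omega_\varepsilon|^{1+2\beta})$, yet it buys something real: it avoids invoking the asymptotic expansion \eqref{wPhi8} with a test function that itself depends on $\varepsilon$, a uniformity issue that the paper's argument passes over silently.

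One step of your estimate does not close as written. For $\int_0^T\int_{\omega_\varepsilon}\widetilde k\,\nabla u^\varepsilon\cdot\nabla\Phi$ you propose to treat $\nabla u^\varepsilon$ ``as in \eqref{estw2}'', i.e. through $\|\nabla u^\varepsilon\|_{L^{10/7}(\omega_\varepsilon\times(0,T))}\leq C|\omega_\varepsilon|^{7/10}$; H\"older then requires control of $\|\nabla\Phi\|_{L^{10/3}(\omega_\varepsilon\times(0,T))}$, hence of $t\mapsto\|\nabla\Phi(t)\|_{L^\infty(K_0)}$ in $L^{10/3}(0,T)$, whereas \eqref{eq:infEst} as stated provides only the $L^1(0,T;W^{1,\infty}(K))$ norm. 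The pairing can be repaired: estimate instead
\begin{equation*}
\int_0^T\!\!\int_{\omega_\varepsilon}|\nabla u^\varepsilon||\nabla\Phi|
\leq \|\nabla\Phi\|_{L^2(0,T;L^\infty(K_0))}\,\|\nabla u^\varepsilon\|_{L^2(0,T;L^1(\omega_\varepsilon))},
\end{equation*}
where $\|\nabla u^\varepsilon\|_{L^2(0,T;L^1(\omega_\varepsilon))}\leq C|\omega_\varepsilon|$ follows from \eqref{bounduc2} and \eqref{estw}, and the $L^2$-in-time interior bound on $\nabla\Phi$ is exactly what the appendix proof of Proposition \ref{boundaryNorm} actually establishes (via $Z\in L^2(0,T;H^3(K))$, see \eqref{IR23}--\eqref{IR24}), even though the proposition's statement records only the weaker $L^1$-in-time norm. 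Note also that your third term, supported on $\Omega\backslash\omega_\varepsilon$, cannot be handled by the interior estimate at all: there you need the global energy bound $\|\Phi\|_{L^2(\Omega\times(0,T))}\leq C\|u^\varepsilon-u\|_{L^2(0,T;L^2(\partial\Omega))}$ (the analogue of \eqref{IR13}), paired with \eqref{estw3} and the uniform bound on $w$. With these two adjustments your absorption argument is sound and yields the claimed conclusion.
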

\begin{proof}
Consider the difference
\begin{equation}
\begin{aligned}
j(\varepsilon;z) - j(0)
&= \frac{1}{2} \|u^\varepsilon - u_{meas} \|_{L^2(0,T;L^2(\partial \Omega))}^2 - \frac{1}{2} \|u - u_{meas} \|_{L^2(0,T;L^2(\partial \Omega))}^2 \\
& =  \int_0^T \int_{\partial \Omega} (u^\varepsilon - u)(u - u_{meas})dt  + \frac{1}{2} \|u^\varepsilon - u \|_{L^2(0,T;L^2(\partial \Omega))}^2.
\end{aligned}
\label{eq:expansion}
\end{equation}
According to \eqref{wPhi8} and to the definition of the adjoint problem \eqref{eq:adjoint}, we can express
\[
	\int_0^T \int_{\partial \Omega} (u^\varepsilon - u)(u - u_{meas})dt =
|\omega_\varepsilon| \left\{ \int_0^T\int_\Omega \widetilde k \mathcal{M} \nabla u\cdot  \nabla W d\mu
+  \int_0^T\int_\Omega  f(u)W d\mu + o(1) \right\}.
\]
Since we assume \eqref{eq:omegaez}, the measure $\mu$ associated to the inclusion is the Dirac mass $\delta_z$ centered in point $z$ (see \cite{CV}).
Hence
\begin{equation}
\begin{aligned}
	\int_0^T \int_{\partial \Omega} (u^\varepsilon - u)(u - u_{meas})dt = &
|\omega_\varepsilon| \left\{ \int_0^T \widetilde k \mathcal{M} \nabla u(z)\cdot  \nabla W(z)  \right. \\
& + \left. \int_0^T f(u(z))W(z) \right\} + o(|\omega_\varepsilon|).
\end{aligned}
\label{eq:part1}
\end{equation}
Moreover, by \eqref{wPhi8}, the second term in the left-hand side of \eqref{eq:expansion} can be expressed as
\[
	\int_0^T \int_{\partial \Omega} (u^\varepsilon - u)(u^\varepsilon - u)dt =
|\omega_\varepsilon| \left\{ \int_0^T\widetilde k \mathcal{M} \nabla u(z)\cdot  \nabla \Phi(z) +  \int_0^T f(u(z))\Phi(z) \right\} + o(|\omega_\varepsilon|),
\]
where $\Phi$ is the solution to \eqref{IR1}. Thanks to regularity results on $u$ (see Theorem \ref{regu}) and using Proposition \ref{boundaryNorm} with
$K = \Omega_{d_0} = \{x \in \Omega \textit{ s.t. }d(x,\partial \Omega) \geq d_0\}$, we obtain
\begin{equation}
\begin{aligned}
&\int_0^T \int_{\partial \Omega} (u^\varepsilon - u)(u^\varepsilon - u)dt \leq C |\omega_\varepsilon| \left\{ \int_0^T | \nabla \Phi(z)|
+  \int_0^T |\Phi(z)| \right\} + o(|\omega_\varepsilon|) \\
& \leq C|\omega_\varepsilon| \|u^\varepsilon - u \|_{L^2(0,T,L^2(\partial \Omega))} + o(|\omega_\varepsilon|)
 \leq C|\omega_\varepsilon| \|u^\varepsilon - u \|_{L^2(0,T,H^1(\Omega))} + o(|\omega_\varepsilon|) \\
&\leq C|\omega_\varepsilon|^{\frac{3}{2}} + o(|\omega_\varepsilon|) = o(|\omega_\varepsilon|).
\end{aligned}
\label{eq:part2}
\end{equation}
Replacing \eqref{eq:part1} and \eqref{eq:part2} in \eqref{eq:expansion}, we finally get
\[
	j(\varepsilon;z) - j(0)
= |\omega_\varepsilon| \left\{ \int_0^T \widetilde k \mathcal{M} \nabla u(z)\cdot  \nabla W(z) +  \int_0^T f(u(z))W(z) \right\} + o(|\omega_\varepsilon|).
\]
\end{proof}
Thanks to the representation formula \eqref{eq:repForm}, evaluating the topological gradient of the cost functional requires just the solution
of two initial and boundary value problems. This yields the definition of
a \textit{one-shot algorithm} for the identification of the center of a small inclusion satisfying hypotesis \eqref{eq:omegaez} (see Algorithm \ref{al:top}).

\begin{algorithm}[h!]
\begin{algorithmic}
	\REQUIRE $u_0(x,0) \, \forall \, x \in \Omega$, $u_{meas}(x,t) \ \forall \, x \in \partial \Omega$, $t \in (0,T)$.
	\ENSURE approximated centre of the inclusion, $\bar{z}$
	\begin{enumerate}
		\item compute $u$ by solving $(P_0)$;
		\item compute $W$ by solving (PA);
		\item determine $G$ according to \eqref{eq:repForm};
		\item find $\bar{z}$ s.t. $G(\bar{z}) \leq G(z) \quad \forall \, z \in \Omega$.
	\end{enumerate}
 \end{algorithmic}
\caption{Reconstruction of a single inclusion of small dimensions}
\label{al:top}
\end{algorithm}

Inspired by the electrophysiological application, we consider moreover the possibility to have {\em partial boundary measurements}, i.e.
the case where the support of the function $u_{meas}$ is not the whole
boundary $\partial \Omega$ but only a subset $\Gamma \subset \partial\Omega$. In this case, it is possible to formulate a slightly different optimization problem,
in which we aim at minimizing the mismatch between the measured and the perturbed data just on the portion $\Gamma$ of the boundary. The same reconstruction algorithm can be devised
for this problem, by simply changing the expression of the Neumann condition of the adjoint problem.

\section{Numerical results}
\setcounter{equation}{0}

In order to implement Algorithm 1 for the detection of inclusions, it is necessary to approximate the solution of the background problem \eqref{(P_0)} and
the adjoint problem \eqref{eq:adjoint}. Moreover, when considering synthetic data $u_{meas}$, we must be able to compute the solution to the perturbed problem
\eqref{perturbed} in presence of the exact inclusion. We rely on the Galerkin finite element method for the numerical approximation of these problems.
The one-shot procedure makes the reconstruction algorithm very efficient, only requiring the solution of an adjoint problem for each acquired measurement over
the time interval, without entailing any iterative (e.g. descent) method for numerical optimization.

\subsection{Finite Element approximation of  initial and boundary value problems}

The background problem \eqref{(P_0)} can be cast in weak form as follows

\textit{$\forall \ t\in(0,T)$, find $ u(t) \in V = H^1(\Omega)$ such that $u(0) = u_0$ and}
\begin{equation}
	\int_{\Omega} u_t(t)v + \int_{\Omega} k_1 \nabla u(t)\cdot \nabla v + \int_{\Omega}f(u(t))v = 0, \qquad \forall \, v \in V.
\label{eq:weakU}
\end{equation}
By introducing a finite-dimensional subspace $V_h$ of $V$, $dim(V_h) = N_h < \infty$, the Galerkin (semi-discretized in space) formulation of problem
\eqref{eq:weakU} reads

\textit{$\forall \, t\in(0,T)$, find $u_h(t) \in V_h$ such that $u_h(0) = u_{h,0}$ and}
\begin{equation}
	((u_h)_t(t),v_h) + b(u_h(t),v_h) + F(u_h(t),v_h) = 0, \qquad \forall \, v_h \in V_h,
\label{eq:semidiscreteU}
\end{equation}
where $(\cdot,\cdot)$ is the inner product in $L^2(\Omega)$, $b(u,v) = \int_{\Omega}k_1 \nabla u \cdot \nabla v$, $F(u,v)=\int_{\Omega} f(u)v$, $f$ is defined as in \eqref{H2} and $u_{h,0}$ is the $H^1$-projection of $u_0$ onto $V_h$.

To obtain a full discretization of the problem, we introduce a finite difference approximation in time. According to the strategy reported in \cite{Pavarino2014book}, \cite{colli2004parallel},
we rely on a semi-implicit scheme which allows an efficient treatment of the nonlinear terms.
Let us consider an uniform partition $\{t^n\}_{n = 0}^N$ of the time interval $[0,T]$ of step $\tau=\frac{T}{N}$ s.t. $t^0 = 0, \ t^N=T$. Then, the fully discrete
formulation of \eqref{(P_0)} is given by

\textit{$\forall \,n = 0, \ldots N-1$, find $ u_h^{n+1} \in V_h$ such that $u_h^0 = u_{0,h}$ and}
\begin{equation}
	(u^{n+1}_h,v_h) - (u^n_h,v_h) + \tau b(u^{n+1}_h,v_h) + \tau F(u^n_h,v_h) = 0, \qquad \forall \, v_h \in V_h.
\label{eq:discreteU}
\end{equation}
\par With the same discretization strategy one may describe a numerical scheme for the approximate solution of the perturbed problem, using the weak form reported in \eqref{wepsilon} and introducing the forms
\[
	b_\varepsilon(u,v) = \int_{\Omega} k_\varepsilon \nabla u \cdot \nabla v, \qquad
	F_\varepsilon(u,v) = \int_{\Omega} \chi_{\Omega \setminus \omega_\varepsilon} f(u)v.
\]
The adjoint problem, instead, requires the introduction of the form $dF(u,v;w) = \int_{\Omega} f'(w)uv$, which is bilinear with respect to $u$ and $v$. Thanks
to the linearity of the adjoint problem, we can consider a fully implicit Crank-Nicolson scheme

\textit{$\forall \, n = 0, \ldots N-1$, find $ w_h^{n} \in V = H^1(\Omega)$ such that $w^N_h = 0$ and}
{
\begin{equation}
\begin{aligned}
	&(w^{n+1}_h,v_h) - (w^n_h,v_h) +\frac{\tau}{2} \left( b(w^{n+1}_h,v_h) + b(w^{n}_h,v_h) + \right. \\
	&\left. dF(w^{n+1}_h,v_h;u^{n+1}_h) + dF(w^n_h,v_h;u^{n}_h) \right)= \\
	&\frac{\tau}{2} \left( \int_{\partial \Omega} (u_h^{n+1} - u_{meas}(t^{n+1}))v_h + \int_{\partial \Omega} (u_h^n - u_{meas}(t^n))v_h  \right), \quad \forall \, v_h \in V_h.
\label{eq:discreteW}
\end{aligned}
\end{equation}
}
The existence and the uniqueness of the solutions of the fully-discrete problems \eqref{eq:discreteU} and \eqref{eq:discreteW} follow by the well-posedness of the continuous problems, since $V_h$ is a subspace of $H^1(\Omega)$.
For further details on the stability and of the convergence of the proposed schemes we refer to \cite{fernandez2010decoupled}, \cite{sanfelici2002convergence} and \cite{Pavarino2014book}.
\par
The numerical setup for the simulation is represented in Figure \ref{fig:setup}. We consider an idealized geometry of the left ventricle (which has been object of several studies, see e.g. \cite{Pavarino2014book},
\cite{colli2004parallel}), and define a tetrahedral tesselation $\mathcal{T}_h$ of the domain. The discrete space $V_h$ is the P1-Finite Element space over $\mathcal{T}_h$, i.e. the space of the continuous functions
over $\Omega$ which are linear polynomials when restricted on each element $T \in \mathcal{T}_h $. The mesh we use for all the reported results consists of 24924 tetrahedric elements and $N_h=5639$ nodes.
We report also the anisotropic structure considered in all the recontruction tests, according to \cite{rossi2014thermodynamically} and \cite{Pavarino2014book}. The conductivity matrix $\mathbb{K}_0$ for the monodomain equation is given by $\mathbb{K}_0(x) = \mathbb{K}^e(x)(\mathbb{K}^e(x)+ \mathbb{K}^i(x))^{-1} \mathbb{K}^i(x)$, where both $\mathbb{K}^i$ and $\mathbb{K}^e$ are orthotropic tensors with three constant positive real eigenvalues, namely
\[
\begin{aligned}
\mathbb{K}^e(x) &= k_{f}^e \vec{e_f}(x)\otimes\vec{e_f}(x) + k_{t}^e \vec{e_t}(x)\otimes\vec{e_t}(x) + k_{r}^e \vec{e_r}(x)\otimes\vec{e_r}(x)
\\
\mathbb{K}^i(x) &= k_{f}^i \vec{e_f}(x)\otimes\vec{e_f}(x) + k_{t}^i \vec{e_t}(x)\otimes\vec{e_t}(x) + k_{r}^i \vec{e_r}(x)\otimes\vec{e_r}(x)
\end{aligned}
\]
The eigenvectors $\vec{e_f}$, $\vec{e_t}$ and $\vec{e_r}$ are associated to the three principal directions of conductivity in the heart tissue: respectively, the fiber centerline, the tangent direction to the heart sheets and the transmural direction (normal to the sheets).
\begin{figure}[h!]
			\centering \vspace{-0.5cm}
			\subfloat[Domain]{
		    	\includegraphics[width=0.3\textwidth]{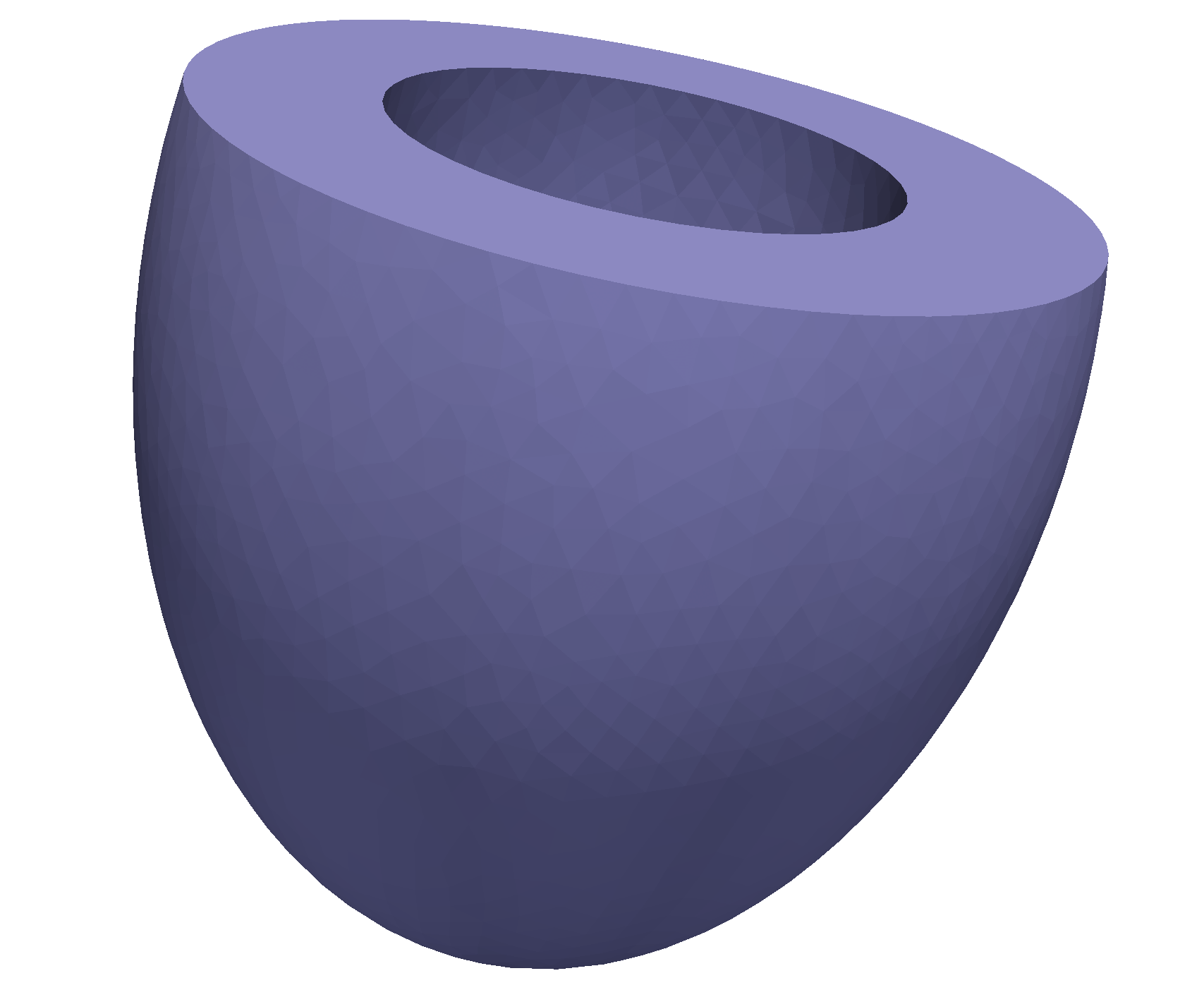}
			}
			\subfloat[Mesh (section)]{
		    	\includegraphics[width=0.3\textwidth]{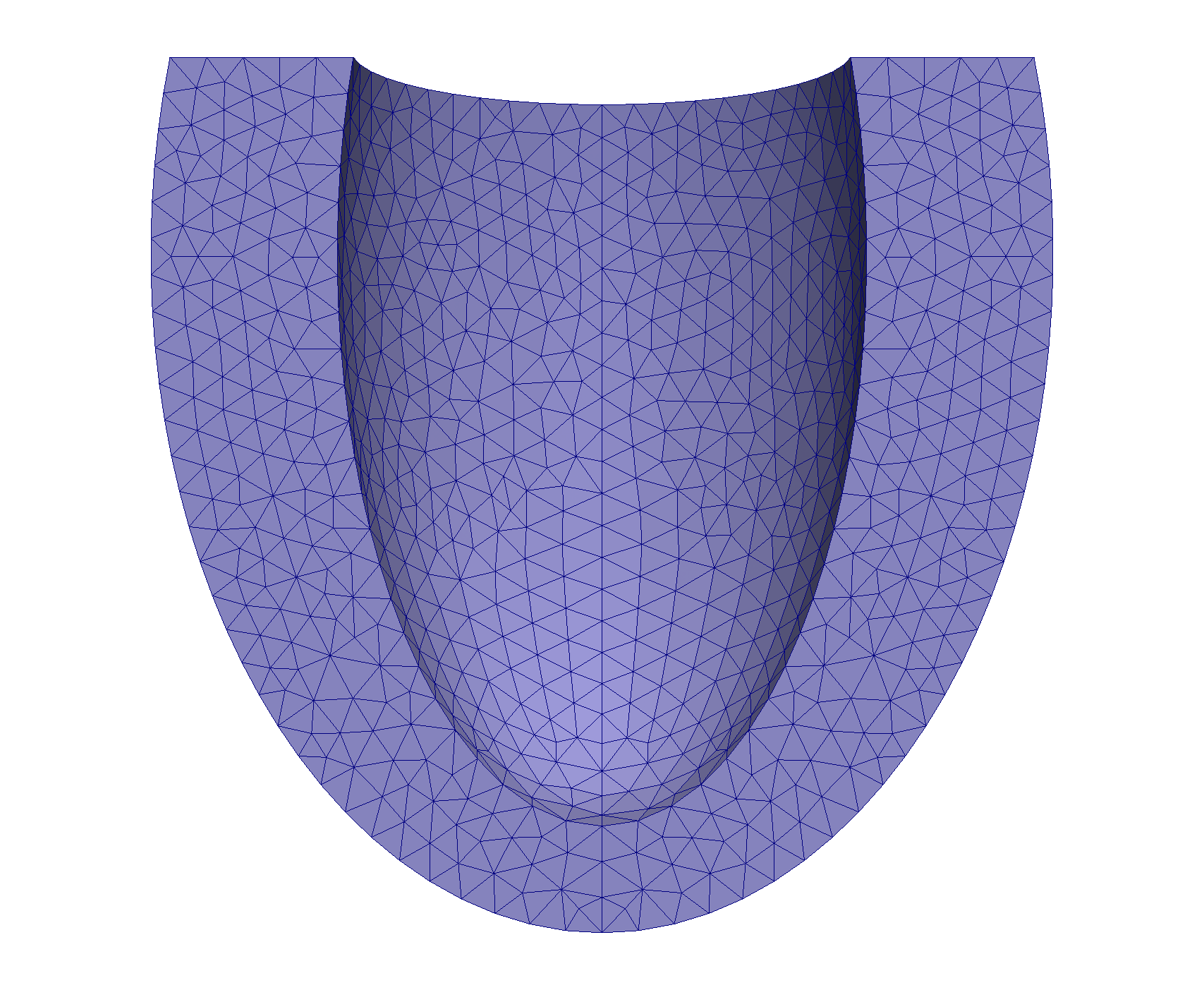}
			}
			\subfloat[Fiber directions]{
		    	\includegraphics[width=0.3\textwidth]{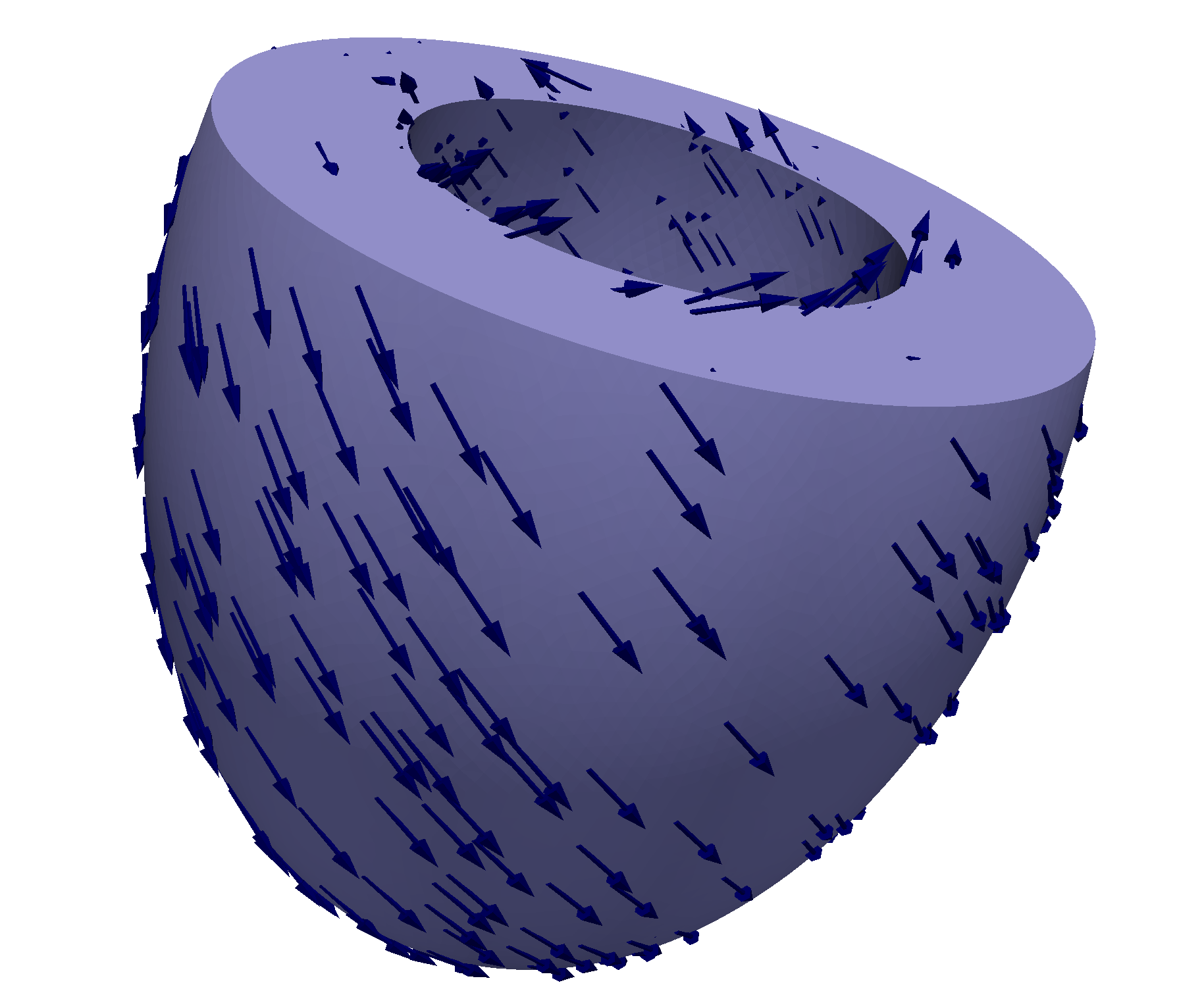}
			}
	\caption{Setup of numerical test cases}
	\label{fig:setup}
	\end{figure}
	\par
For the direct problem simulations, we consider the formulation reported in \eqref{(P_0)}, specifying realistic values for the parameters $C_m$ and $\nu$. We have rescaled the values of the coefficients $u_1$, $u_2$, $u_3$ and $A^2$ in order to simulate the electric potential in the adimensional range $[0,1]$. The rescaling is performed by the transformation $\tilde{u} = (\alpha+u)/\beta$, where $\alpha = 0.085 mV$ and $\beta = 0.125 mV$, whereas for the sake of simplicity we will still denote by u the rescaled variable $\tilde{u}$. We consider the initial datum $u_0$ to be positive on a band of the endocardium, representing the initial stimulus provided by the heart conducting system. The most important parameters, considered in accordance with \cite{gerbeau2015reduced}, \cite{SLCNMT}, are reported in Table \ref{tab:coef}.

\begin{table}[h!]
\centering
\begin{tabular}[t]{|c|c|c|c|c|c|c|c|c|c|c|c|}
\hline
$\nu$ & $C_m$ & $A^2$ & $u_1$ & $u_2$ & $u_3$ & $k_{f}^i$ & $k_{t}^i$ & $k_{r}^i$ & $k_{f}^e$ & $k_{t}^e$ & $k_{r}^e$\\
\hline
$500 \frac{m}{A}$ & $0.1\frac{mA\ ms}{cm^2}$ & 0.2 & 0 & 0.15 & 1 & 3 & 1 & 0.315 & 2 & 1.65 & 1.351\\
\hline
\end{tabular}
\caption{Physical coefficients}
\label{tab:coef}
\end{table}

In Figure \ref{fig:back} we report the solution of the discrete background problem \eqref{eq:discreteU} at different time instants, comparing the isotropic and the anisotropic cases.
\begin{figure}[t!]
			\centering \vspace{-0.5cm}
			\subfloat[Isotropic case, t = 0.2 T]{
		    	\includegraphics[width=0.3\textwidth]{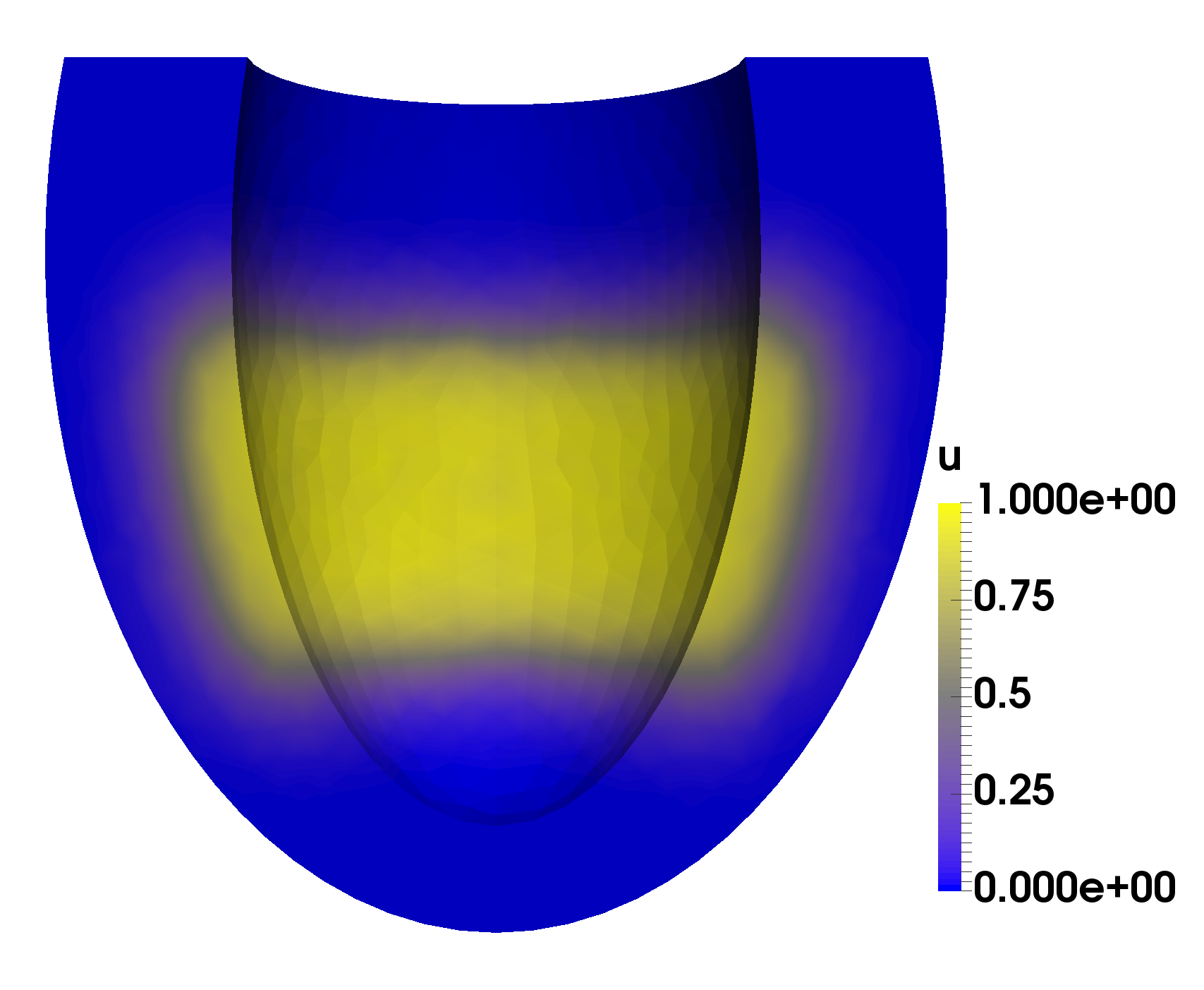}
			}
			\subfloat[Isotropic case, t = 0.5 T]{
		    	\includegraphics[width=0.3\textwidth]{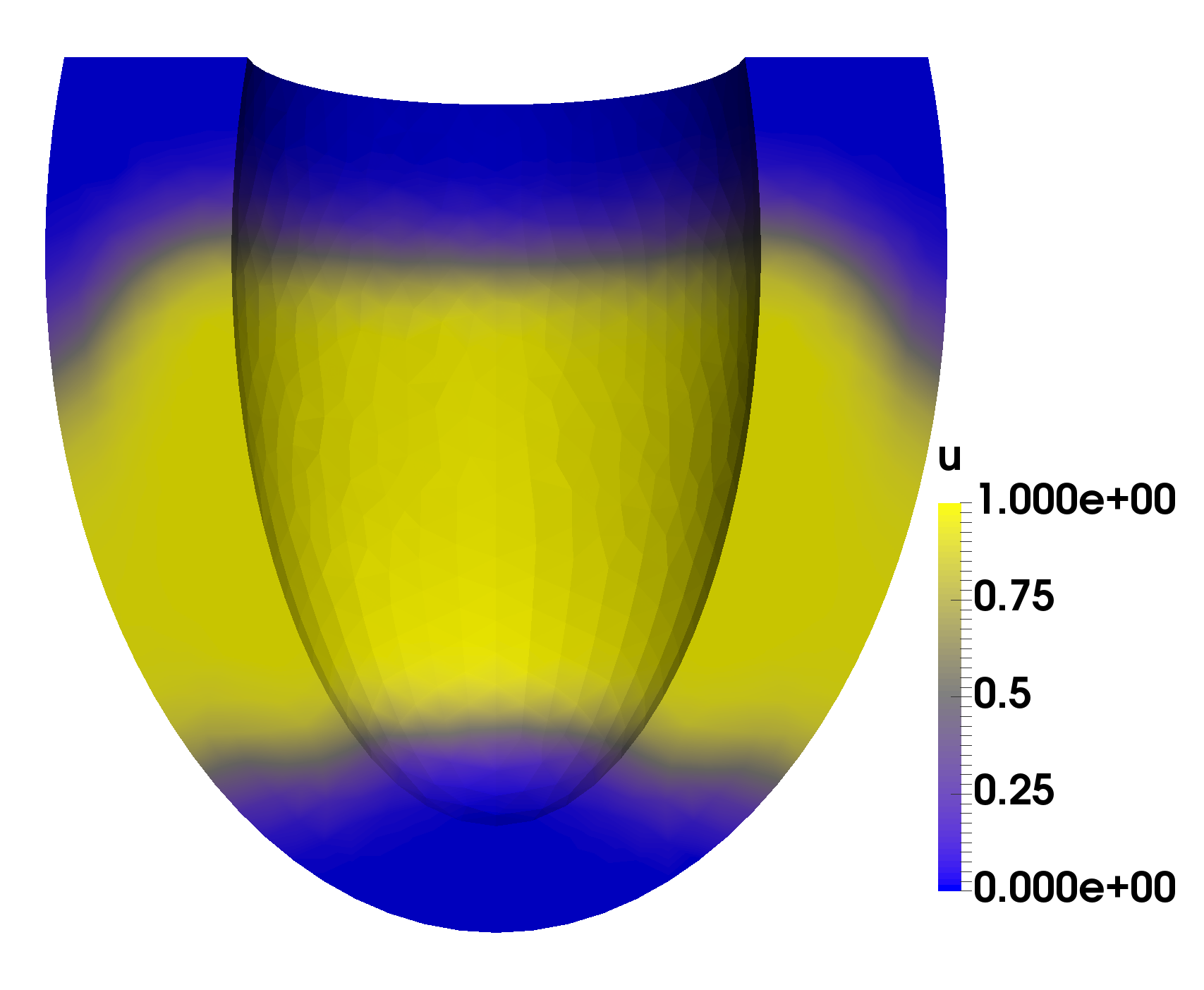}
			}
			\subfloat[Isotropic case, t = 0.8 T]{
		    	\includegraphics[width=0.3\textwidth]{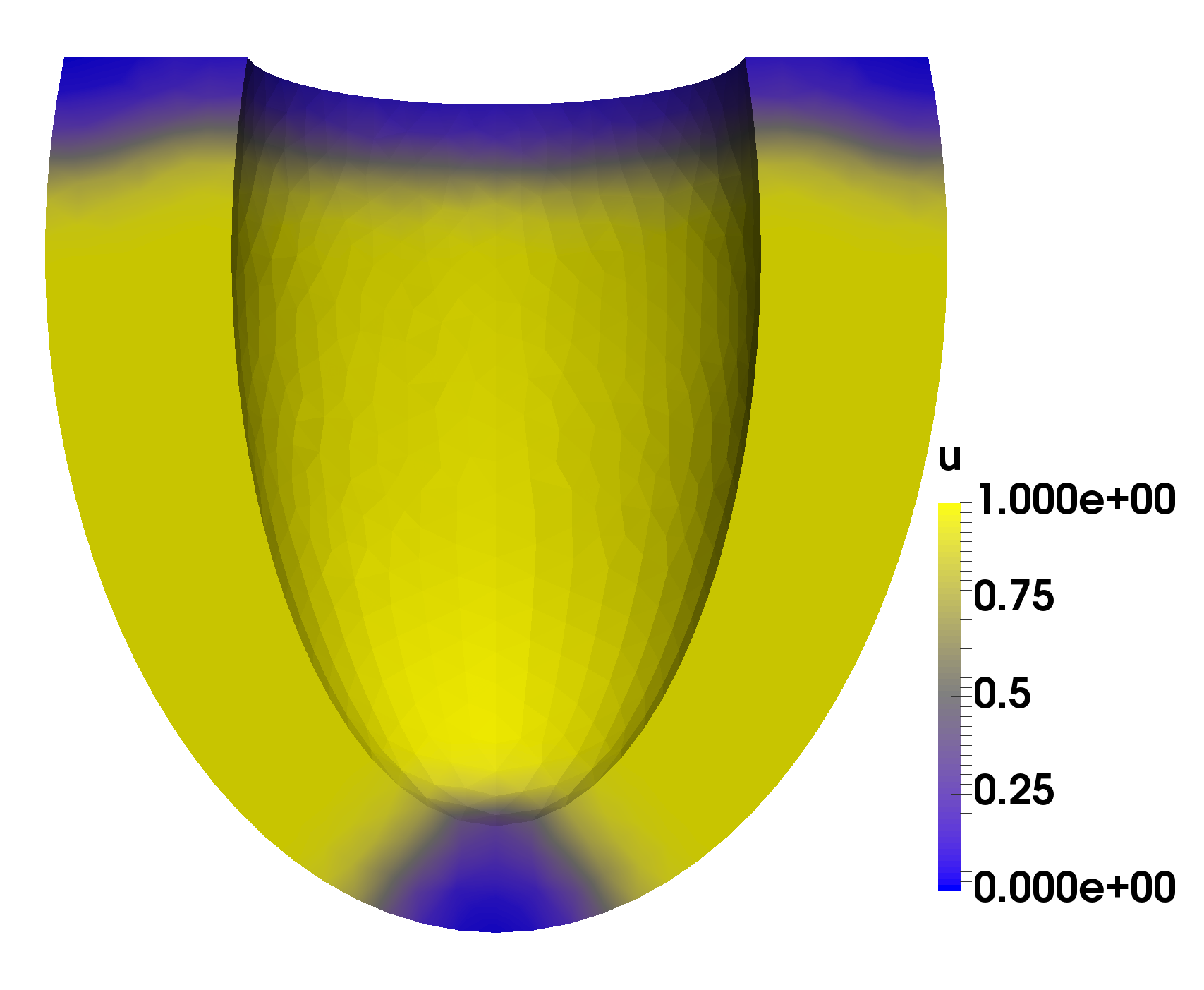}
			}
			\\
			\subfloat[Anisotropic case, t = 0.2 T]{
		    	\includegraphics[width=0.3\textwidth]{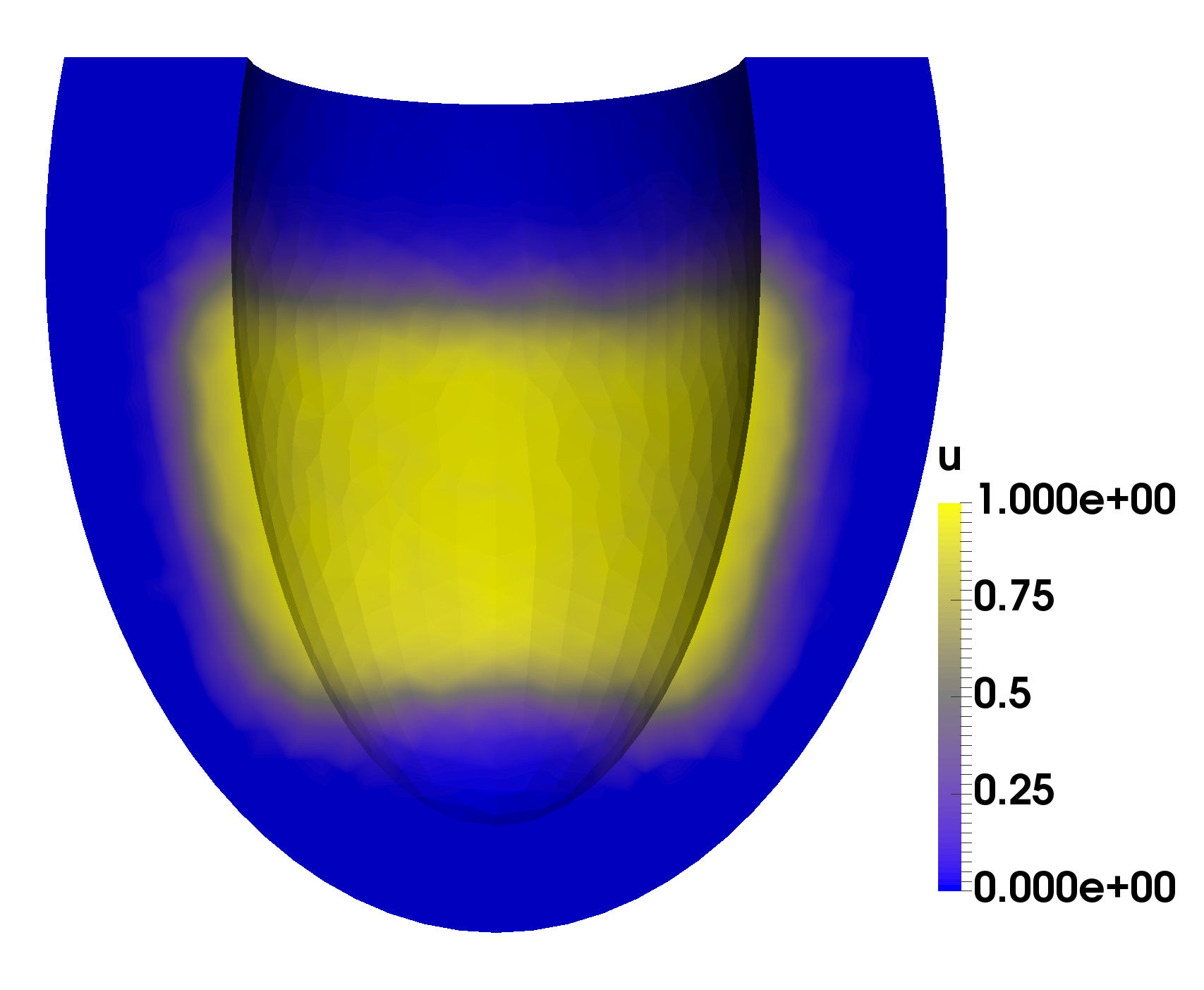}
			}
			\subfloat[Anisotropic case, t = 0.5 T]{
		    	\includegraphics[width=0.3\textwidth]{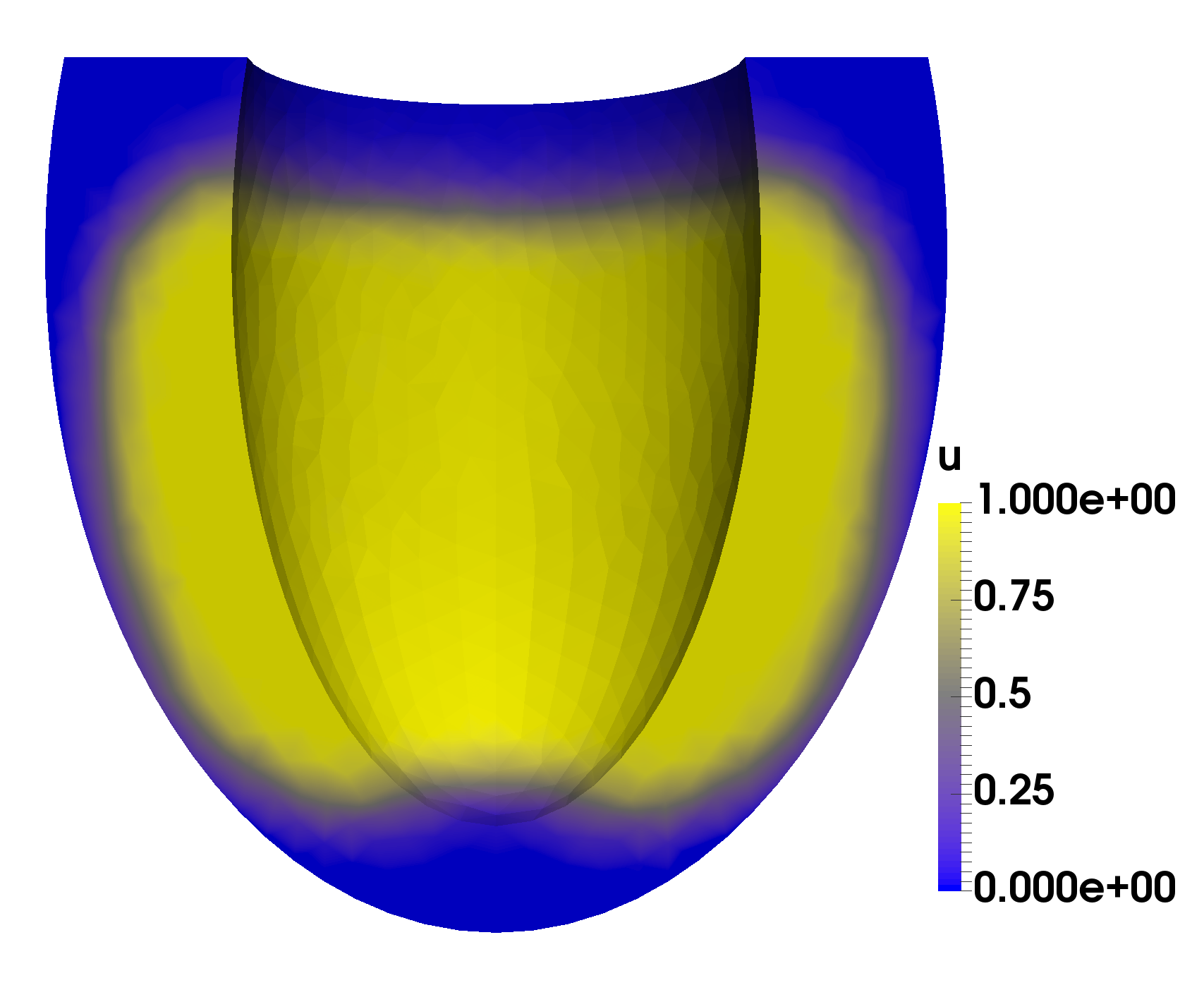}
			}
			\subfloat[Anisotropic case, t = 0.8 T]{
		    	\includegraphics[width=0.3\textwidth]{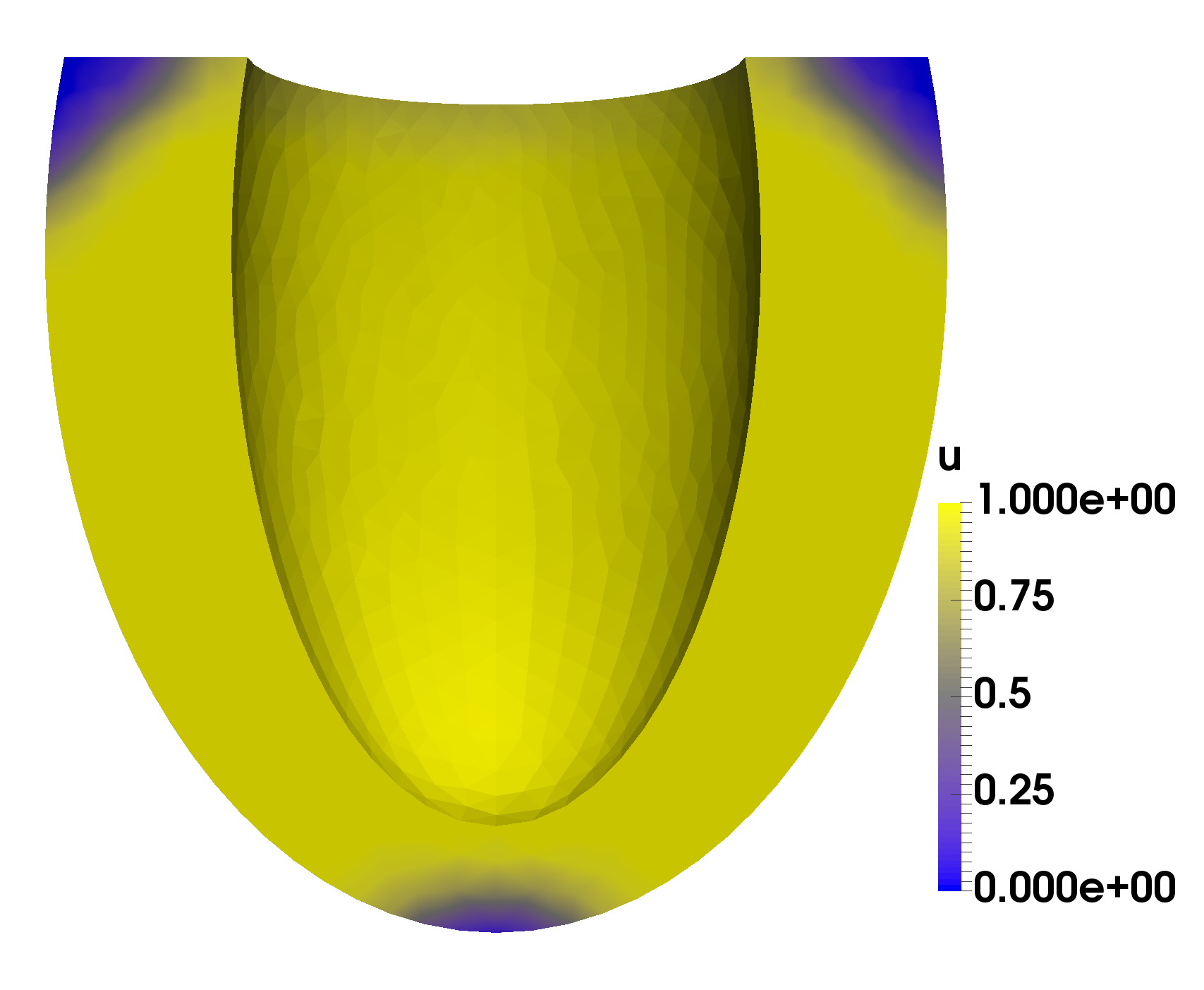}
			}
	\caption{Background problem simulation}
	\label{fig:back}
	\end{figure}

\subsection{Reconstruction of small inclusions}
We now tackle the problem of reconstructing the position of a small inhomogeneity using the knowledge of the electric potential of the tissue on a portion $\Gamma$
of the boundary of the domain. In particular, we assume that $u_{meas}$ is known on the endocardium, i.e. the inner surface of the heart cavity. We generate
synthetic data on a more refined mesh and test the effectiveness of Algorithm 1 in the reconstruction of a small spherical inclusion in different positions.
In Figure \ref{fig:reconstructions} 
we report the value assumed by the topological gradient, and superimpose the exact inclusions: we observe a negative region in proximity of the position of the real inclusion.
The algorithm precisely identifies the region where the inclusion is present, whereas the minimum may in general be found along the endocardium also when the center of the real inclusion is not located on the heart surface.
Nevertheless, due to the thinness of the domain the reconstructed position is close to the real one.
\begin{figure}[b!]
			 \vspace{-0.1cm}
			\centerline{
 		    	\includegraphics[width=0.3\textwidth]{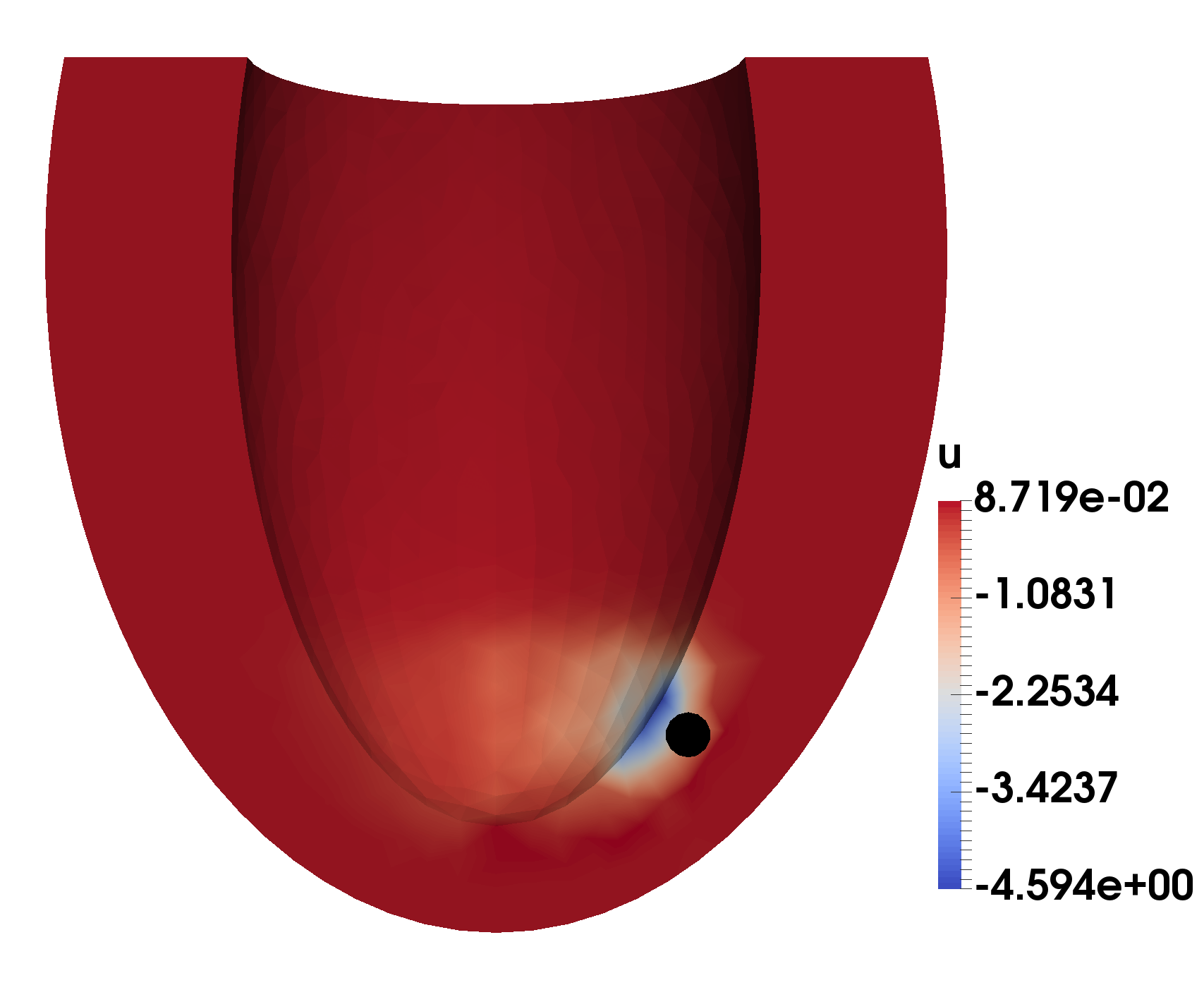}
		    	\includegraphics[width=0.3\textwidth]{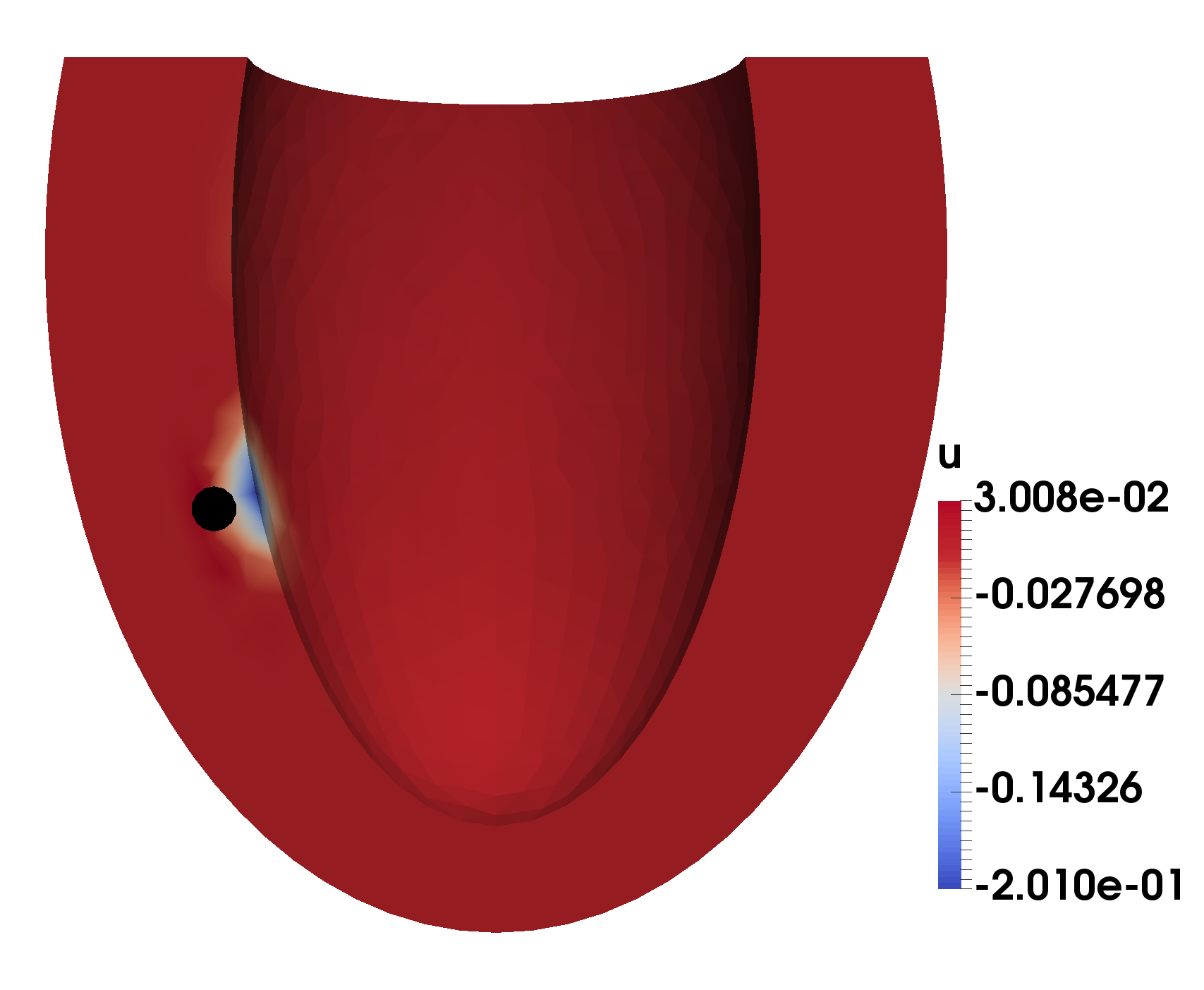}
		    	\includegraphics[width=0.3\textwidth]{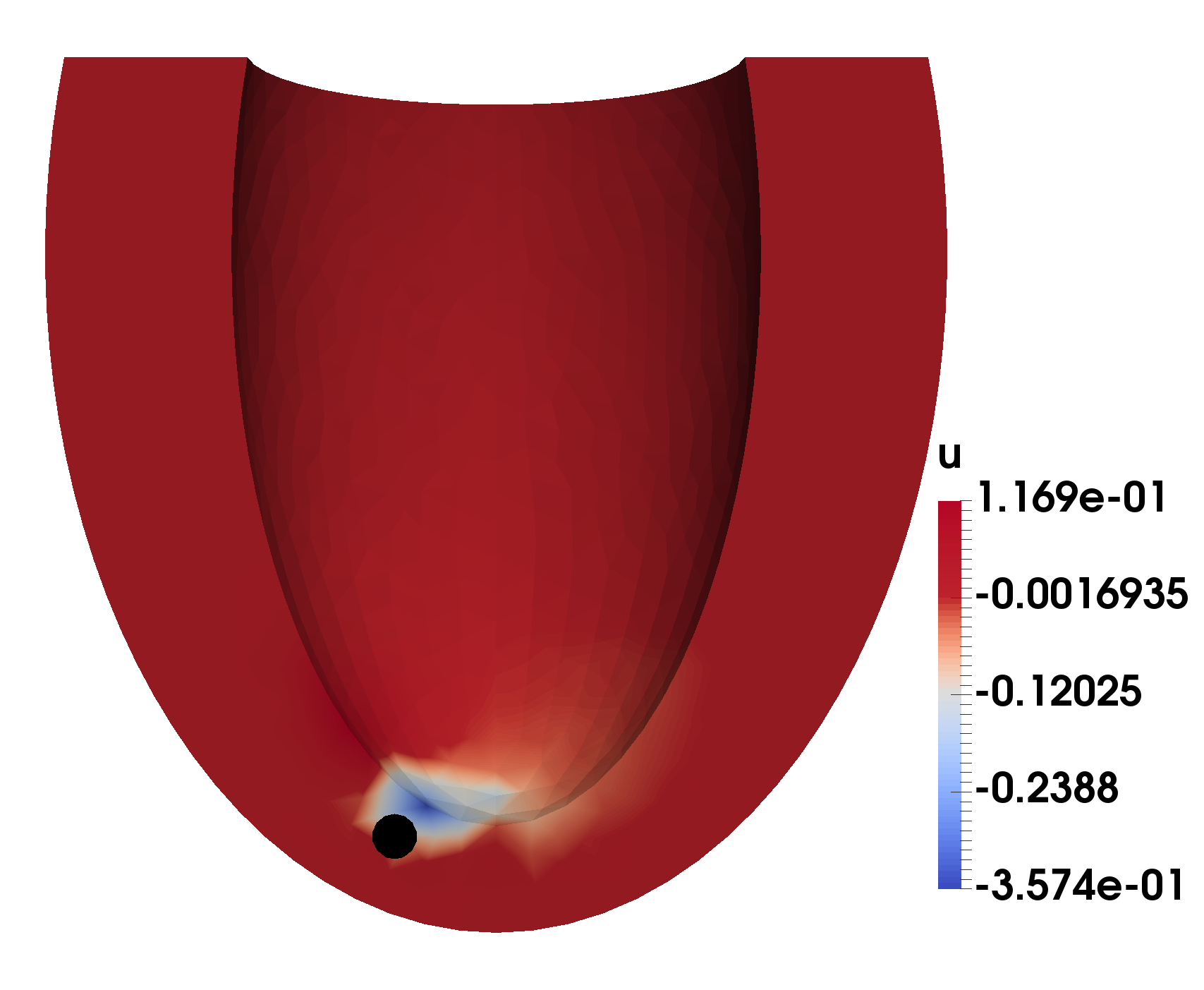}
			}
%
			\centerline{
			\includegraphics[width=0.3\textwidth]{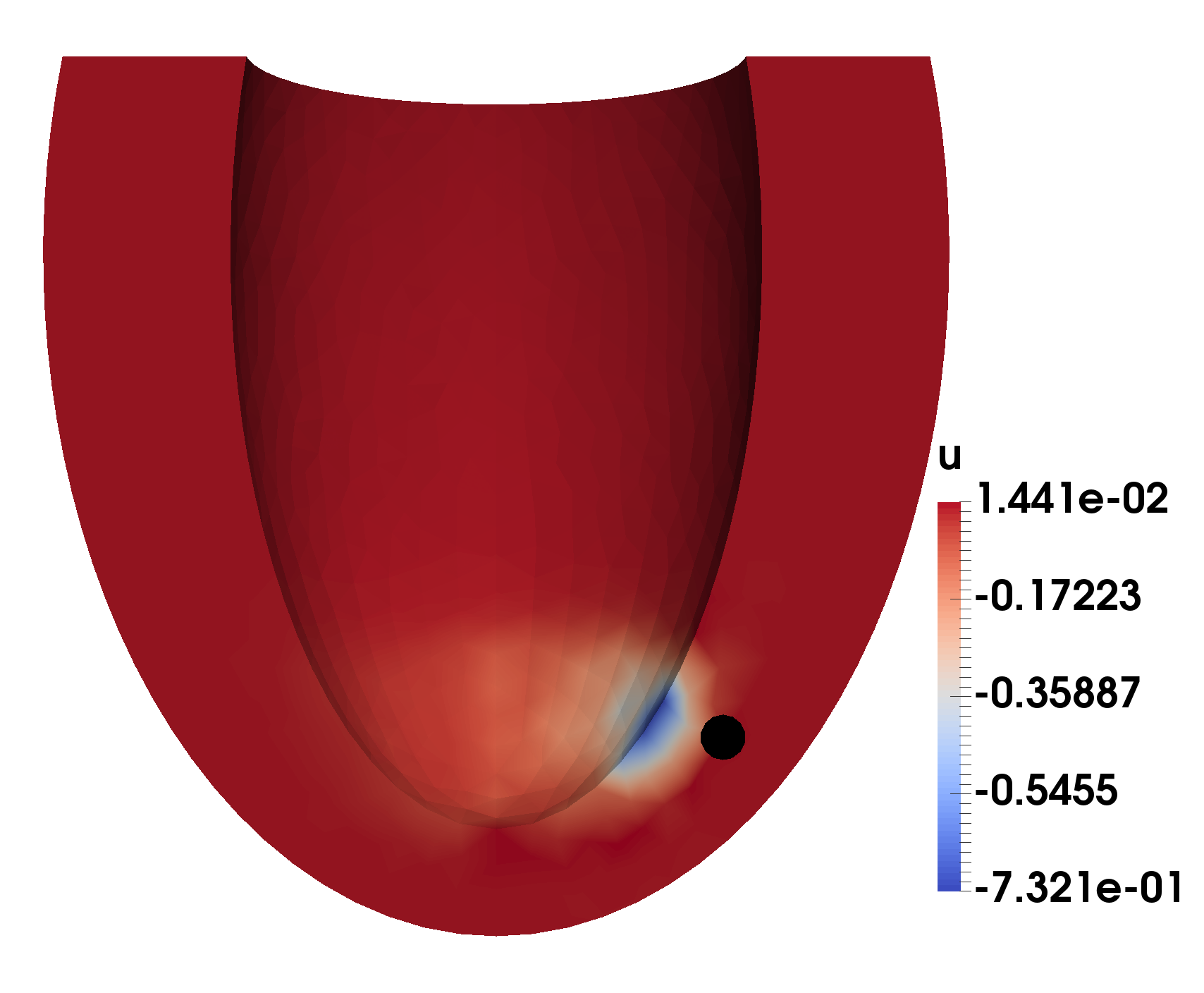}
			\includegraphics[width=0.3\textwidth]{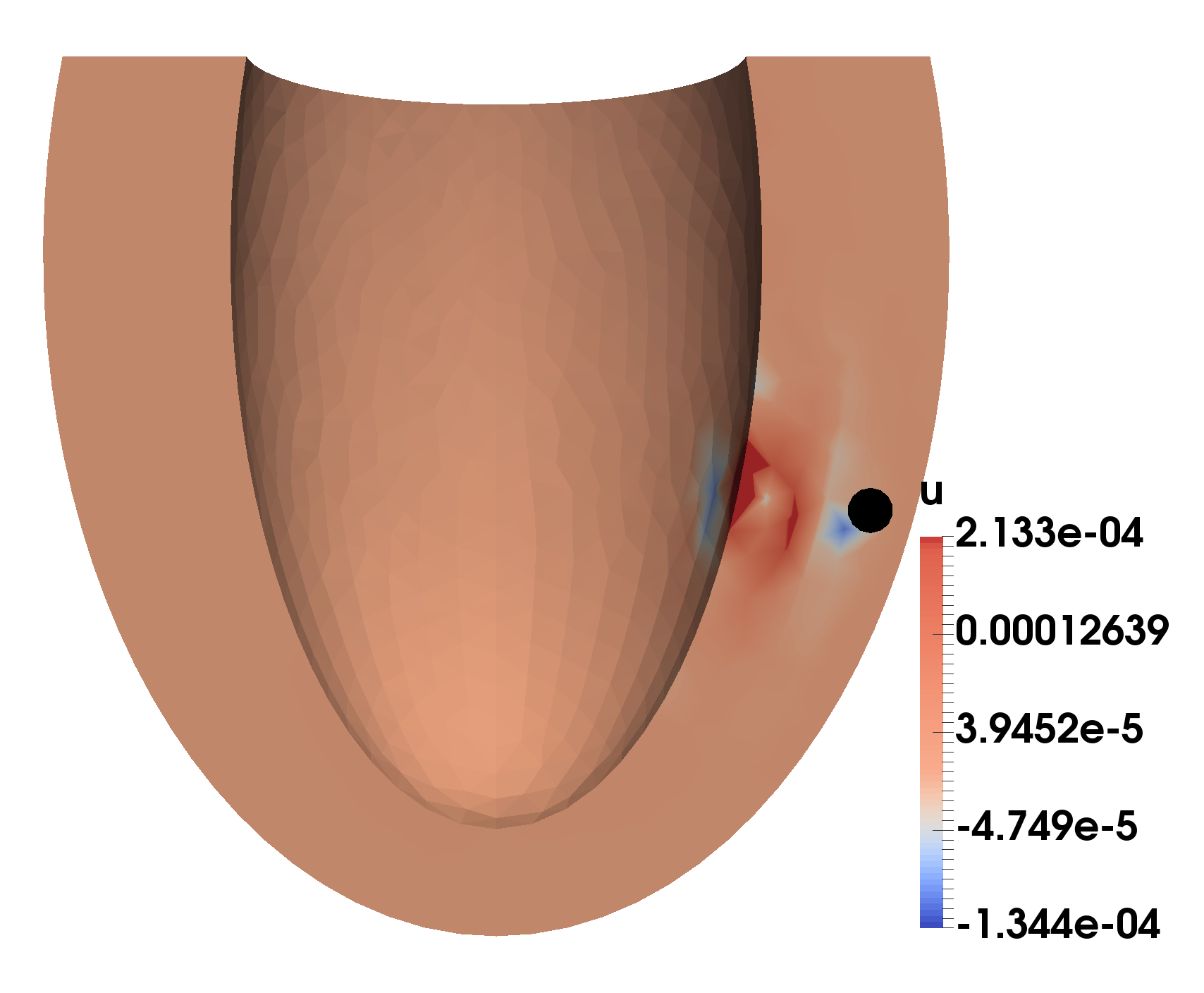}
			\includegraphics[width=0.3\textwidth]{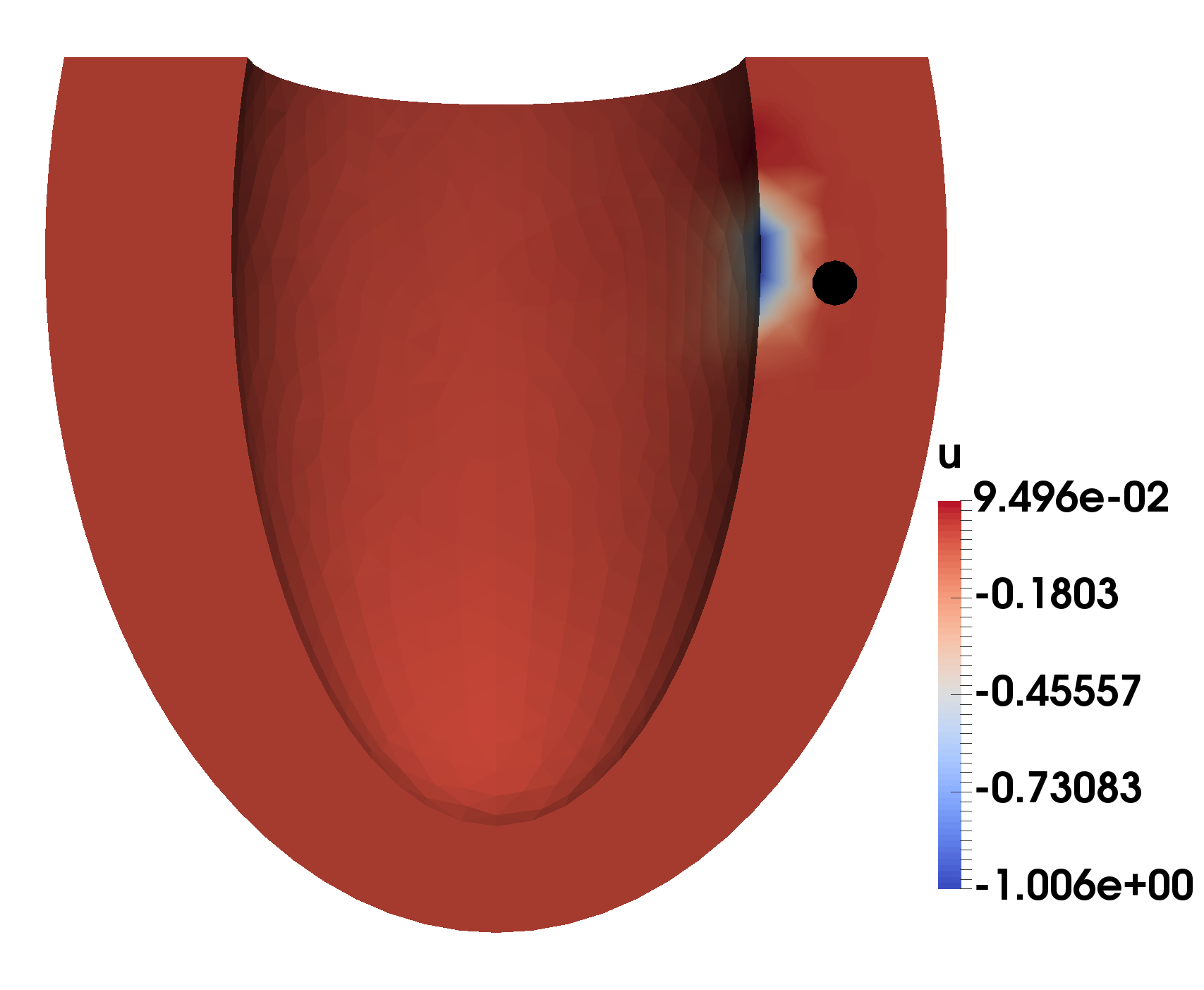}
			}
	\caption{Reconstruction: value of the topological gradient in different configurations}
	\label{fig:reconstructions}
	\vspace{-0.05cm}
	\end{figure}
\par
This slight loss in accuracy seems to be an intrinsic limit of the topological gradient strategy applied to the considered problem. We point out that the reconstruction is performed by relying on a single measurement
acquired on the boundary, which is indeed a constraint imposed by the physical problem. Hence, all the techniques relying on the introduction of several measurements to increase the quality of the reconstruction are impracticable.
A different strategy, as proposed in several works addressing the steady-state case, may consist in introducing a modification to the cost functional $J$. In \cite{ammari2012stability} and many related works the authors
introduce a cost functional inherited from imaging techniques, whereas in \cite{art:cmm}, \cite{park2012topological} different strategies involving the Kohn-Vogelius functional or similar ones are explored. Nevertheless,
the nonlinearity of the direct problem considered in this work prevents the possibility to apply these techniques, since the analitycal expressions of the fundamental solution, single and double layer potentials would not be available.

\subsection{Reconstruction in presence of experimental noise}
We test the stability of the algorithm in presence of experimental noise on the measured data $u_{meas}$. We consider different noise levels, according to the formula
\[
	\widetilde{u_{meas}}(x,t) = u_{meas}(x,t) + p \eta(x,t),
\]
where $\eta(x,t)$, for each point $x$ and instant $t$, is a Gaussian random variable with zero mean and standard deviation equal to $u_3-u_1$, whereas $p \in [0,1]$ is the noise level.
In Figure \ref{fig:error} the results of the reconstruction with different noise levels are compared. The algorithm shows to be highly stable with respect to high rates of noise, with increasing accuracy as the noise level reduces.
\begin{figure}[h!]
			\centering \vspace{-0.15cm}
			\subfloat[noise: $1\%$]{
		    	\includegraphics[width=0.3\textwidth]{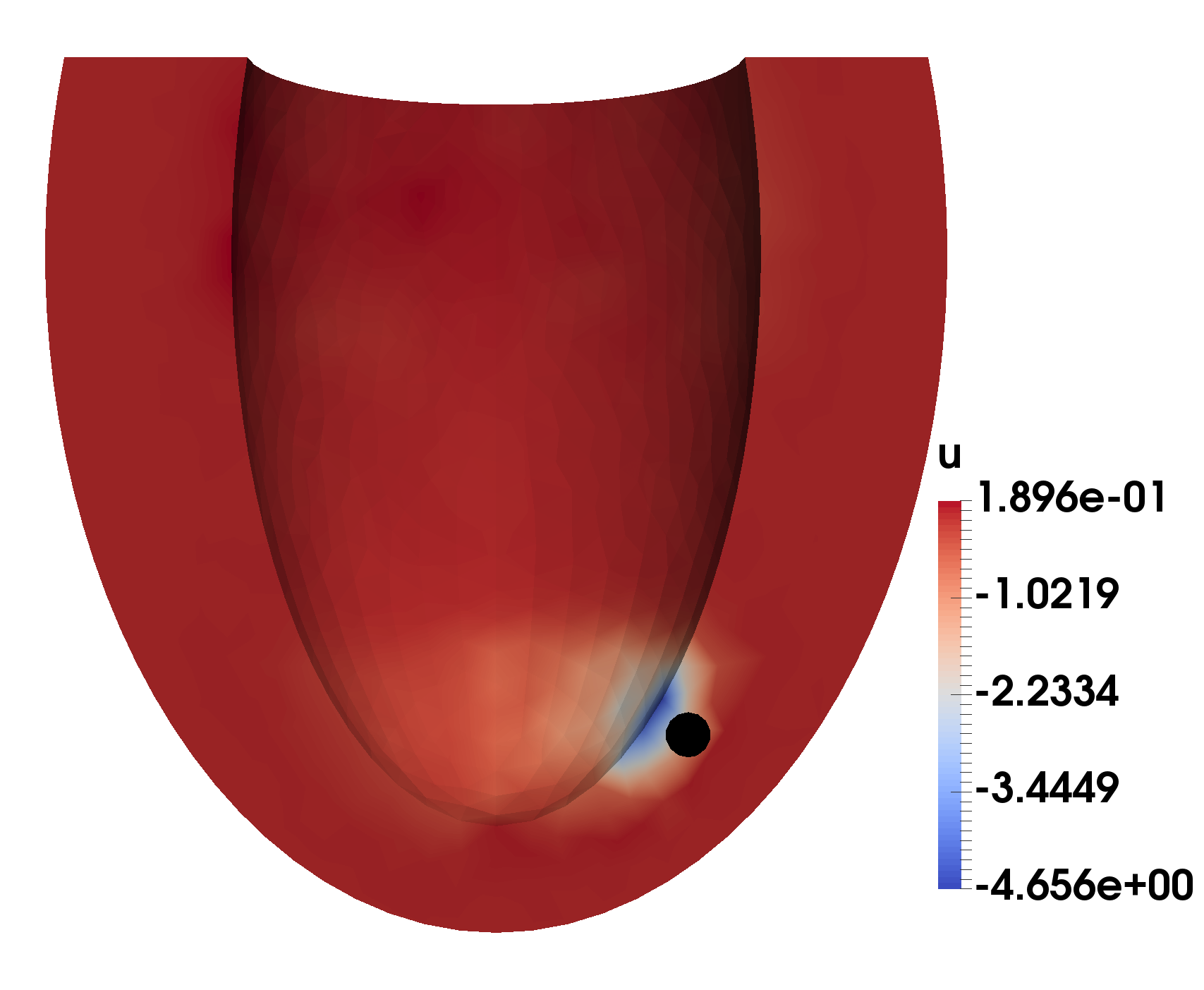}
			}
			\subfloat[noise: $5\%$]{
		    	\includegraphics[width=0.3\textwidth]{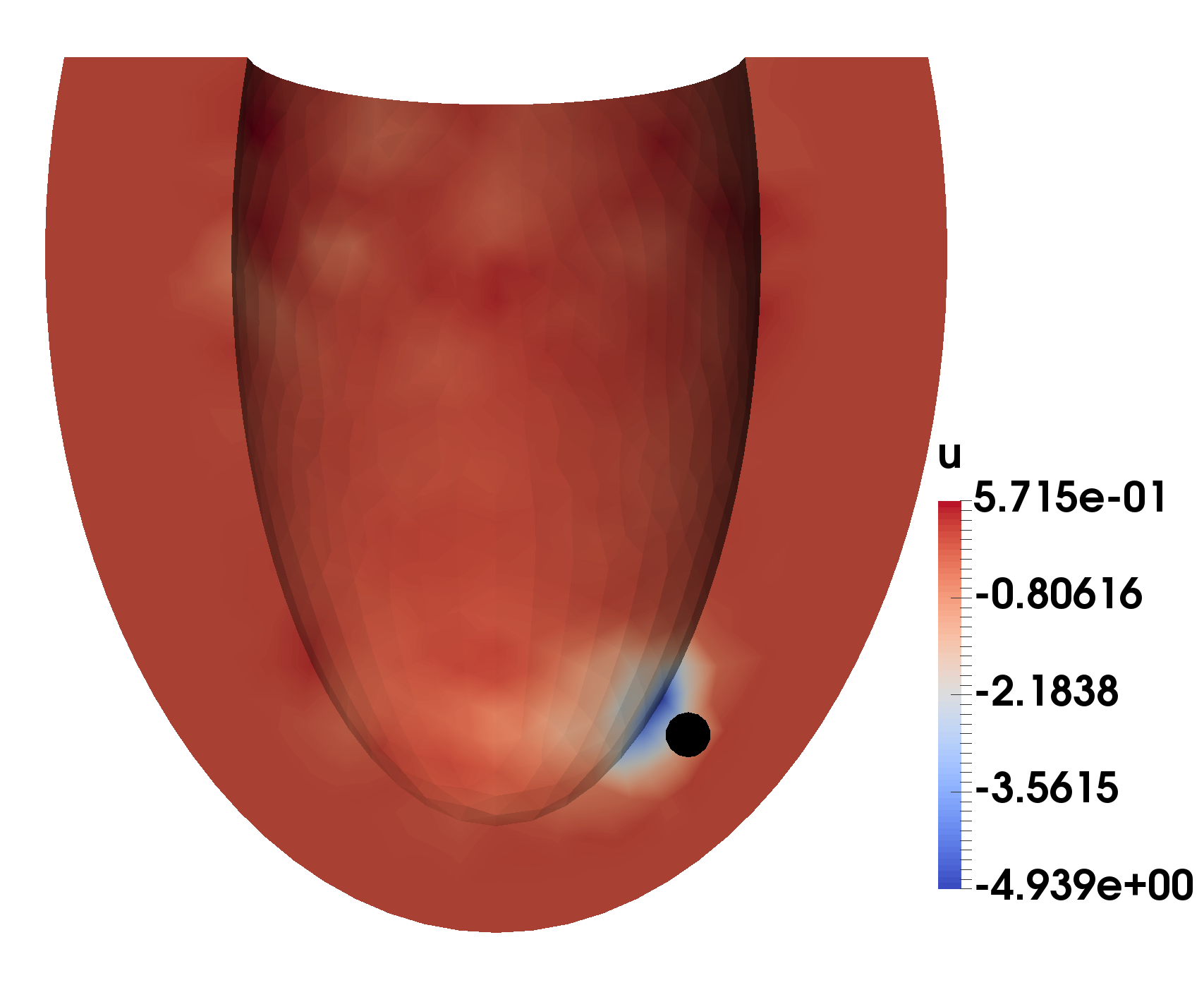}
			}
			\subfloat[noise: $10\%$]{
		    	\includegraphics[width=0.3\textwidth]{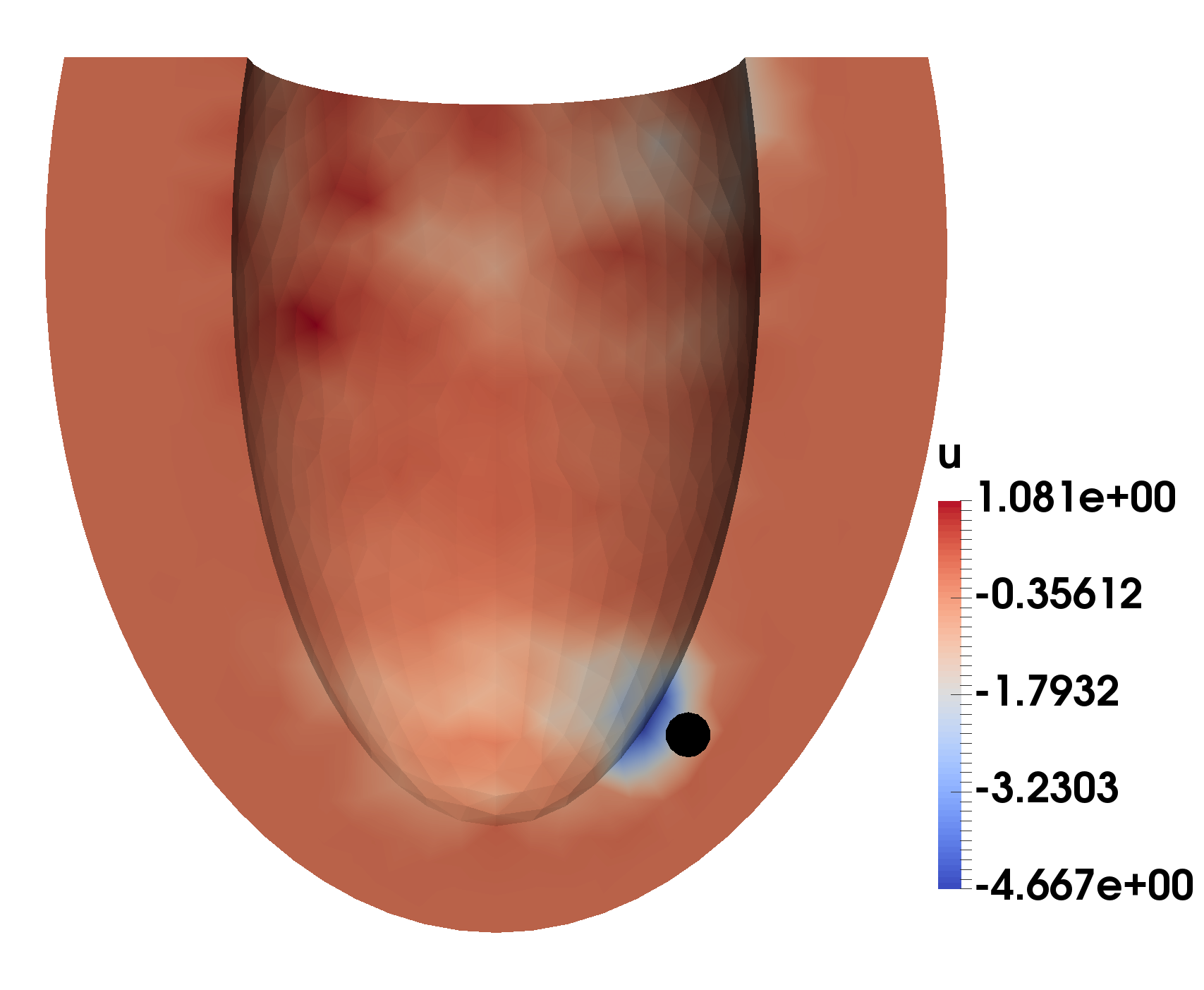}
			}
	\caption{Reconstruction results}
	\label{fig:error} \vspace{-0.1cm}
	\end{figure}

\subsection{Reconstruction from partial discrete data}
A further test case we have performed deals with the reconstruction of the position of small inclusions starting from the knowledge of partial data. We are interested in assessing the effectiveness of our algorithm when the electric potential is measured only on a discrete set of points on the endocardium, possibly simulating the procedure of intracavitary electric measurements. Figure \ref{fig:partial} shows that the algorithm is able to detect the region where the small ischemia is located from the knowledge of the potential on $N_{p} = 246, 61, 15$ different points. The
position of the reconstructed inclusion is slightly affected by the reduction of sampling points; nevertheless, reliable reconstructions can be obtained even with a very small number of points.

\begin{figure}[h!]
			\centering \vspace{-0.5cm}
			\subfloat[$N_{p}=246$]{
		    	\includegraphics[width=0.3\textwidth]{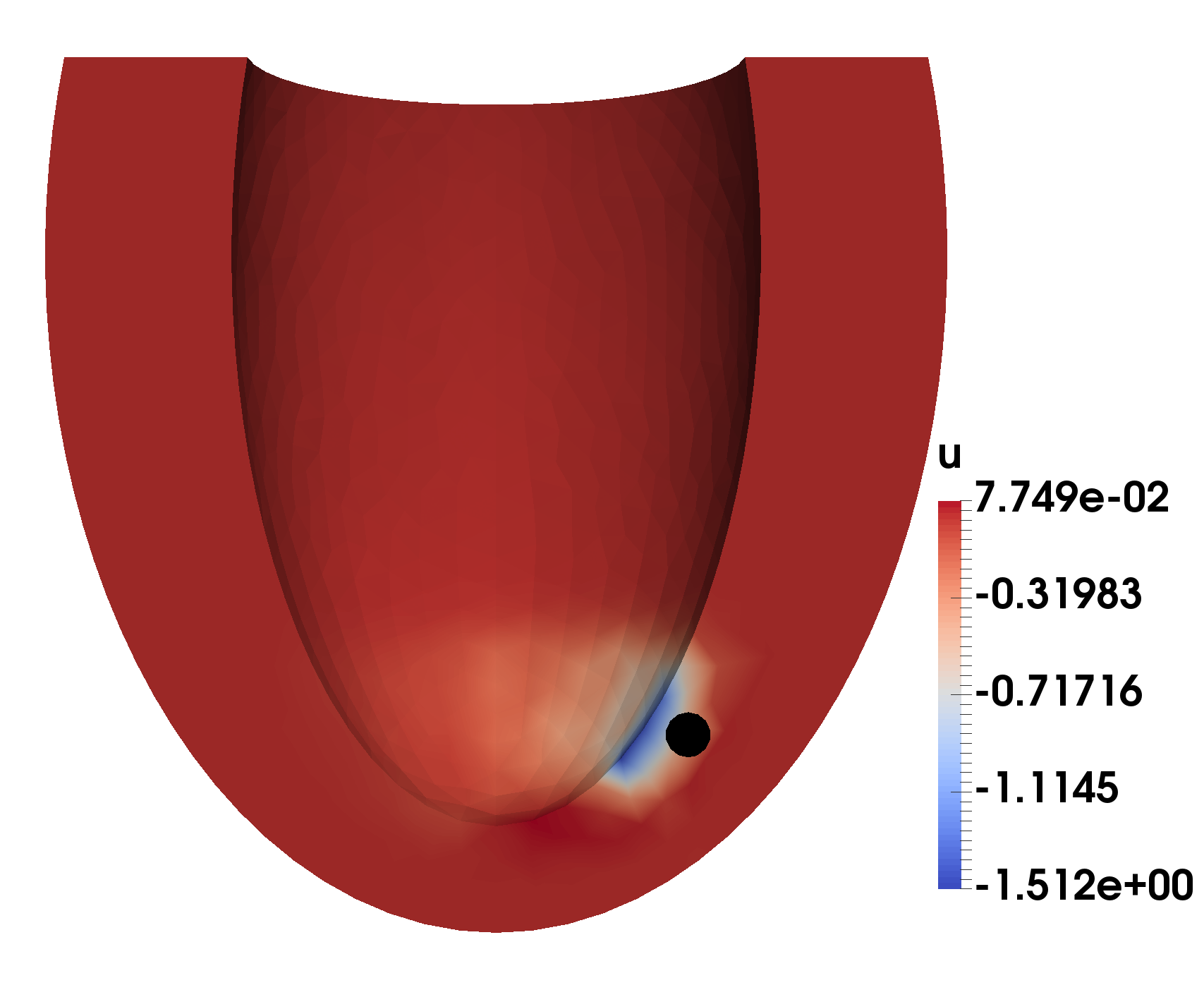}
			}
			\subfloat[$N_{p}=61$]{
		    	\includegraphics[width=0.3\textwidth]{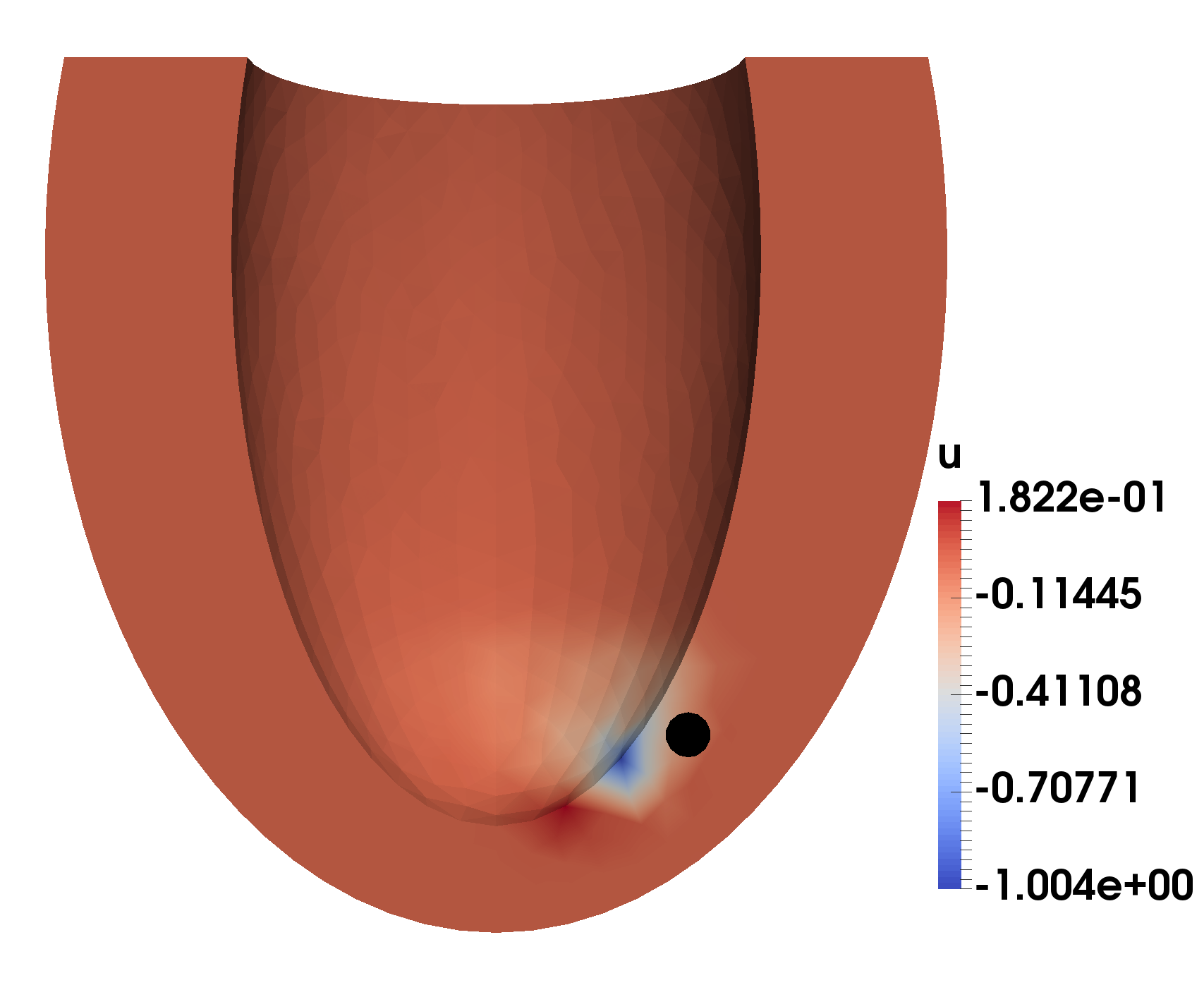}
			}
			\subfloat[$N_{p}=15$]{
		    	\includegraphics[width=0.3\textwidth]{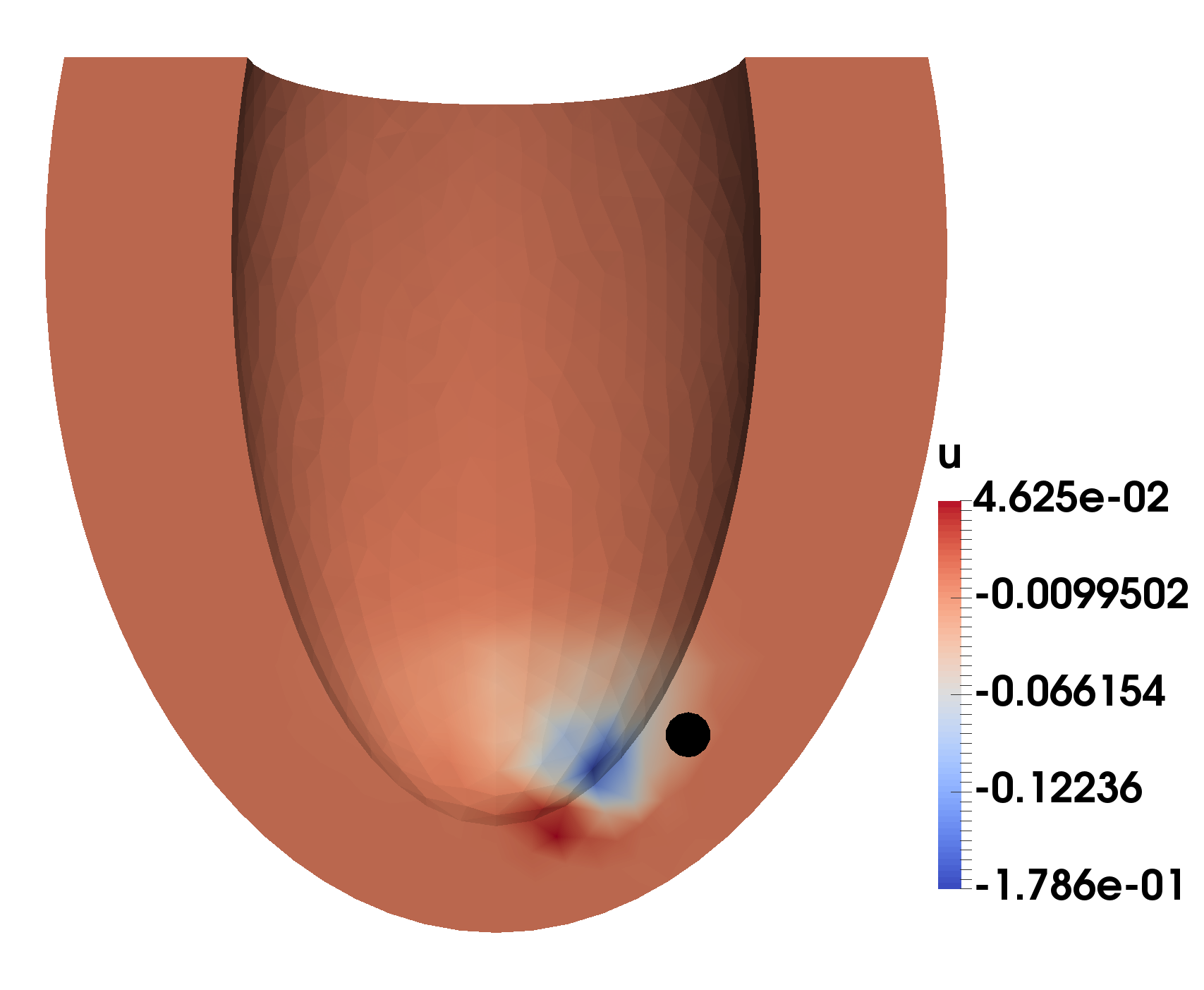}
			}
	\caption{Reconstruction results}
	\label{fig:partial} \vspace{-0.4cm}
	\end{figure}
	
For the same purpose, we have tested the capability of the reconstruction procedure to avoid false positives: the algorithm is able to distinguish the presence of a real ischemia from the case where no ischemic region is present, also in the case where the data are recovered only at a finite set of points, and are affected by noise. We compare the value of the cost functional and of the minimum of the topological gradient obtained through Algorithm \ref{al:top} on data generated when $(i)$ a small ischemia is present in the tissue or $(ii)$ no inclusion is considered. The measurement is performed on a set of $N_p = 100$ points and is affected by different noise levels. The results are reported in Table \ref{tab:falsepos}.

We observe that the presence of a small noise on the measured data causes a great increase of the cost functional $J$: with $5\%$ noise, e.g., the value of $J$ is two orders of magnitude greater than the value that $J$ assumes in presence of a small inclusion without noise.
Nevertheless, the topological gradient $G$ allows to distinguish the false positive cases, since (at least in case of a small noise level) the value attained by its minimum in presence of a small inclusion is considerably lower than the random oscillations of $G$ due to the noise.
\begin{table}[h!]
\centering
\subfloat[Results in presence of ischemia]{
\begin{tabular}[t]{c|c|c|c}
Error & $N_{p}$  & J & $min_\Omega G$ \\
\hline
  $0\%$ & 100 & 0.275 &-0.5793  \\
  $1\%$ & 100 & 1.235 &-0.589  \\
  $2\%$ & 100 & 4.128 &-0.530  \\
  $5\%$ & 100 & 24.429 &-0.589
\end{tabular}
} \hspace{0.2cm}
\subfloat[Results without ischemia]{
\begin{tabular}[t]{c|c|c|c}
Error & $N_{p}$  & J & $min_\Omega G$ \\
\hline
  $0\%$ & 100 & 0.000 & 0.000  \\
	$1\%$ & 100 & 0.964 &-0.044  \\
	$2\%$ & 100 & 3.864 &-0.105  \\
	$5\%$ & 100 & 24.148 &-0.189
\end{tabular}
}
\caption{False positive test. Comparison between reconstruction from data deriving from ischemic and healthy tissue. The null results in the first row of Table (b) are due to the usage of synthetic data.}
\label{tab:falsepos}
\end{table}

\subsection{Reconstruction of larger inclusions}\label{sec:75}
We finally assess the performance of Algorithm 1, developed for the reconstruction of small inclusions well separated from the boundary, in detecting the position of extended inclusions, which may be of greater interest in view of the problem of detecting ischemic regions.
Indeed, total occlusion of a major coronary artery generally causes the entire thickness of the ventricular wall to become ischemic (transmural ischemia) or, alternatively, a significant ischemia only in the endocardium, that is, the inner layer of the myocardium (subendocardial ischemia). See, e.g., \cite{Pavarino2010} for a detailed investigation of the interaction between the presence of moderate or severe subendocardial ischemic regions and the anisotropic structure of the cardiac muscle.
\par
The most important assumption on which our \textit{one-shot} procedure relies is that the variation of the cost functional from the value $J(0)$ attained in the background case to the value related to the exact inclusion can be correctly described by the first order term of its asymptotic expansion, the topological gradient $G$. Removing the hypothesis of the small size, we cannot rigorously assess the accuracy of the algorithm, however  it still allows us to identify the location of the ischemic region.
\par
The results reported in Figure \ref{fig:bigisch} show that in presence of a  inclusion of larger size (and not even separated from the boundary), the minimum of the topological gradient is found to be close to the position of the inclusion, and attains lower values with respect to the previously reported cases.
\begin{figure}[h!]
			\centering \vspace{-0.5cm}
			\subfloat[Exact inclusion]{
		    	\includegraphics[width=0.3\textwidth]{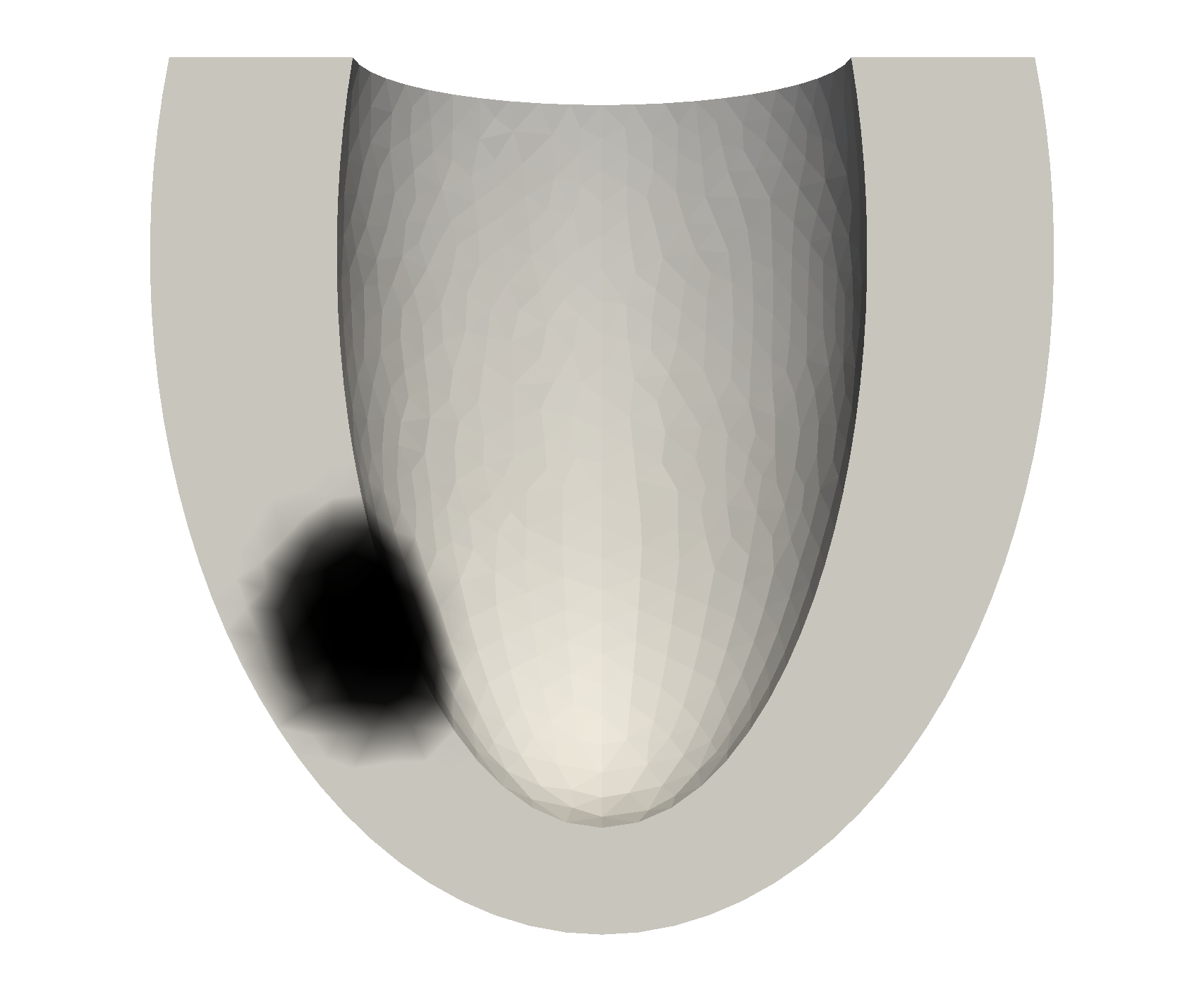}
			}
			\subfloat[Topological gradient]{
		    	\includegraphics[width=0.3\textwidth]{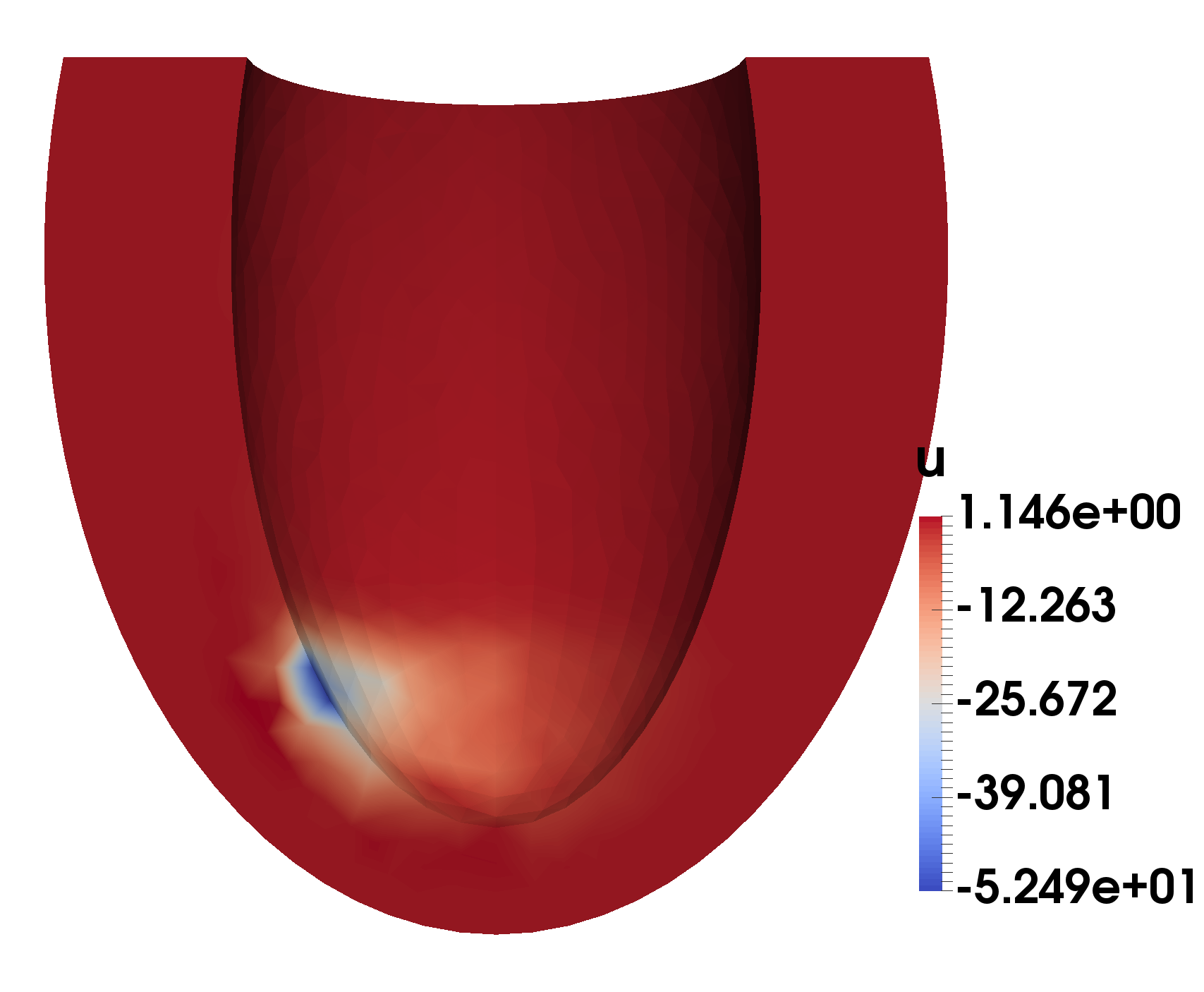}
			}
			\\
			\subfloat[Exact inclusion]{
		    	\includegraphics[width=0.3\textwidth]{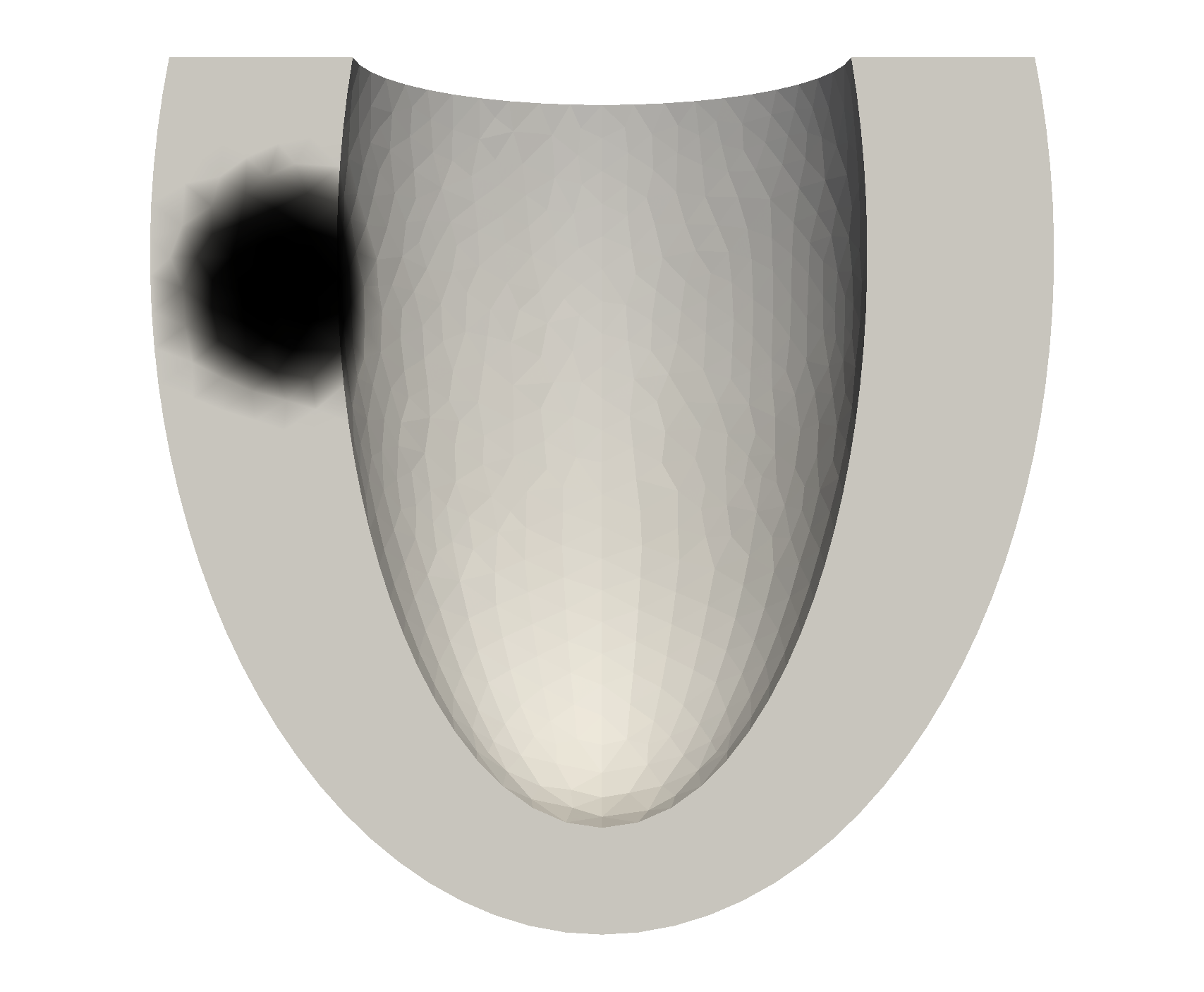}
			}
			\subfloat[Topological gradient]{
		    	\includegraphics[width=0.3\textwidth]{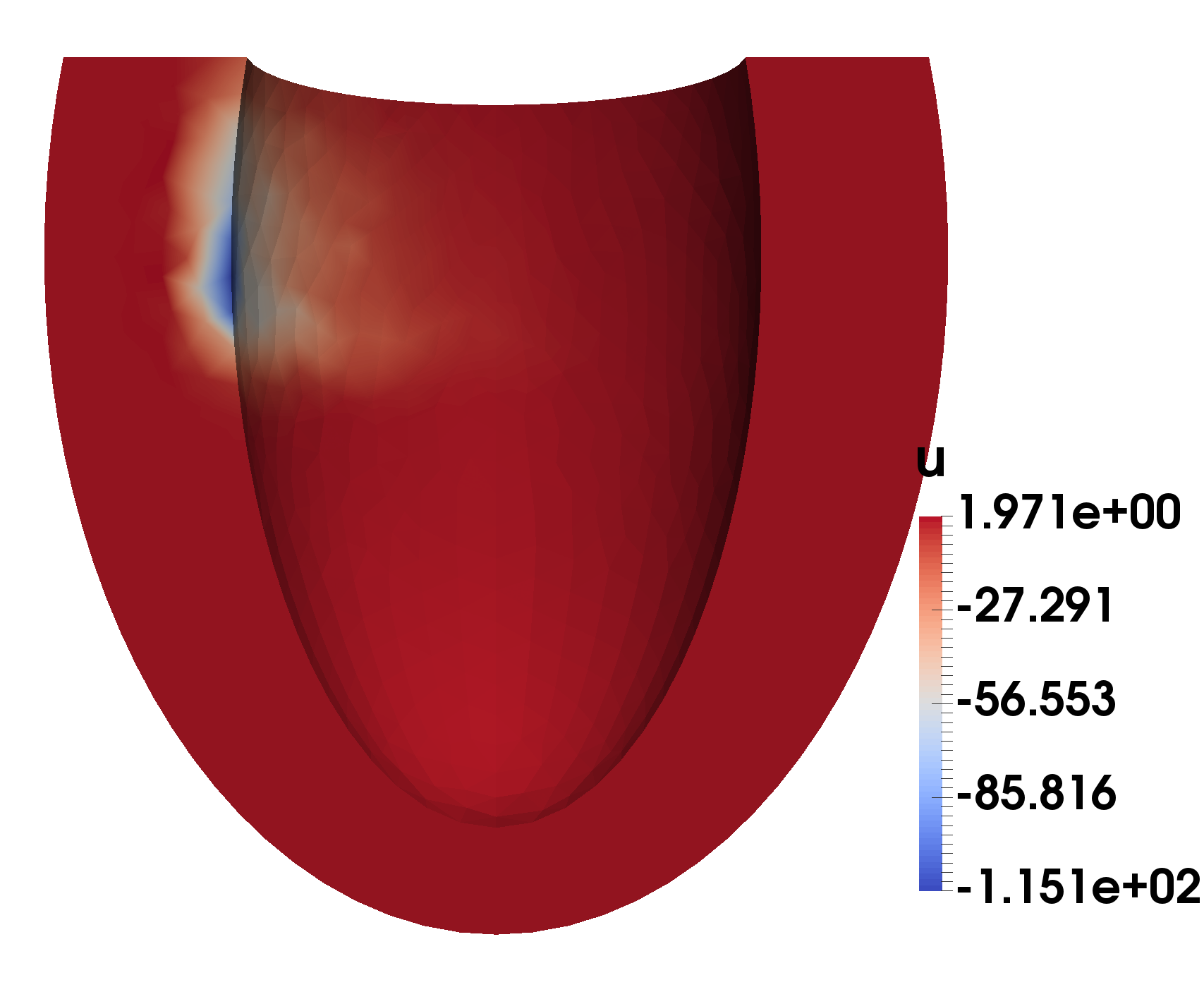}
			}
	\caption{Larger ischemic regions: reconstruction results}
	\label{fig:bigisch} \vspace{-0.4cm}
\end{figure}
\par
Moreover, in Figure \ref{fig:bigisch2} we also assess the stability of the reconstruction with respect to the presence of noisy data and partial measurements, as done in the case of small inclusions.
\begin{figure}[b!]
			\centering \vspace{-0.5cm}
			\subfloat[Exact inclusion]{
		    	\includegraphics[width=0.3\textwidth]{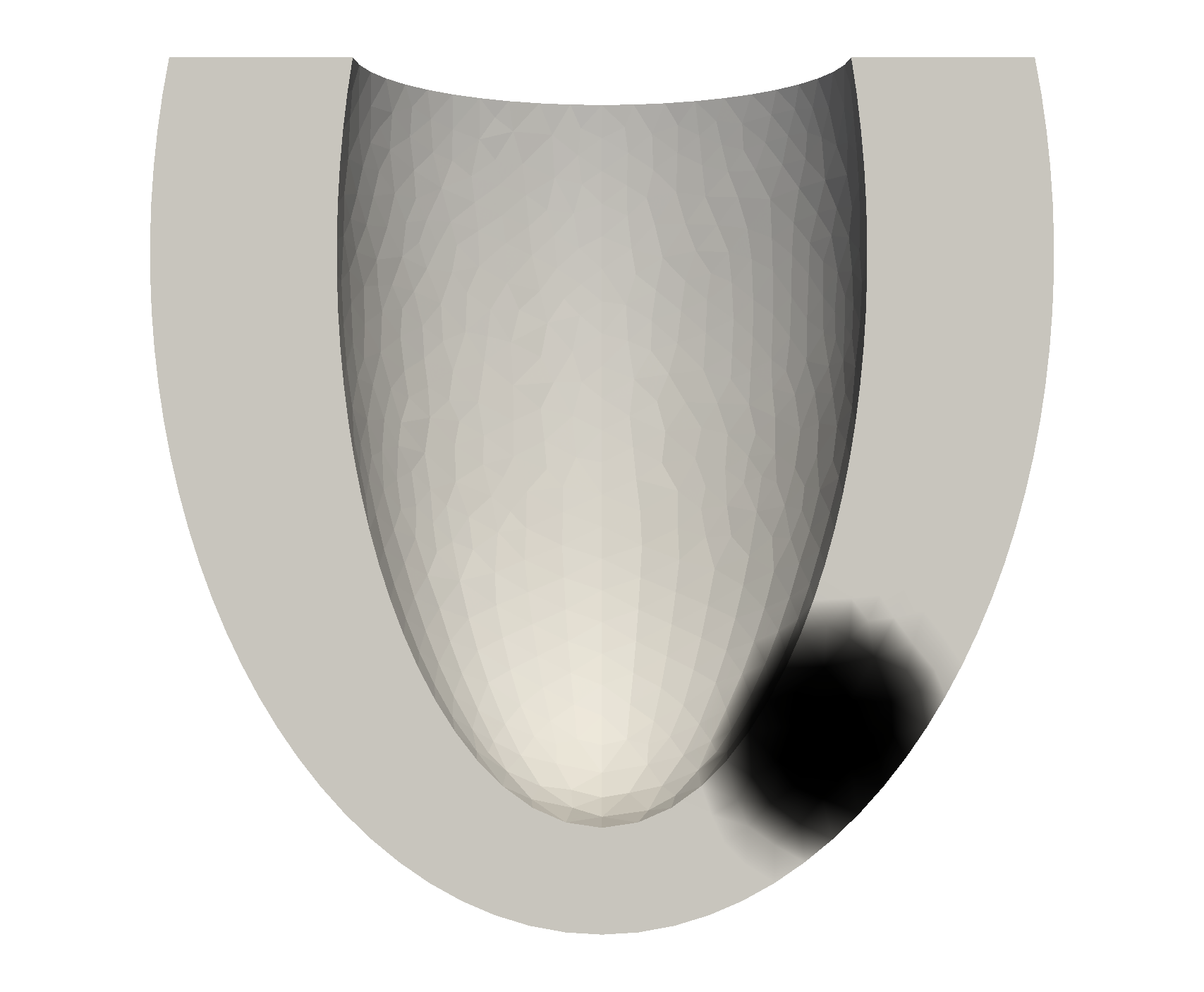}
			}
			\subfloat[Topological gradient, \newline $2\%$ noise]{
		    	\includegraphics[width=0.3\textwidth]{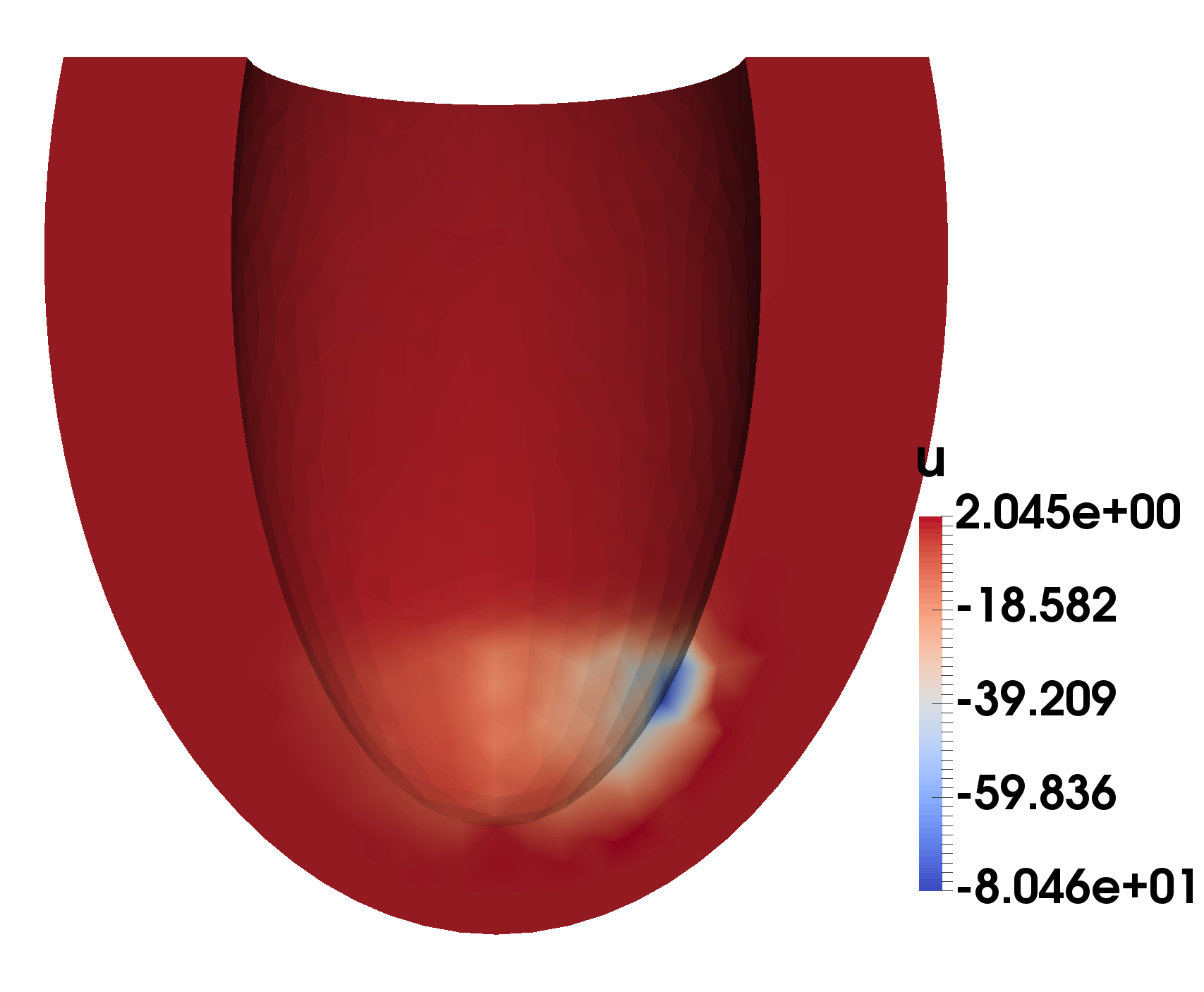}
			}
			\subfloat[Topological gradient, \newline  $2\%$ noise, measurements \newline on 100 points]{
		    	\includegraphics[width=0.3\textwidth]{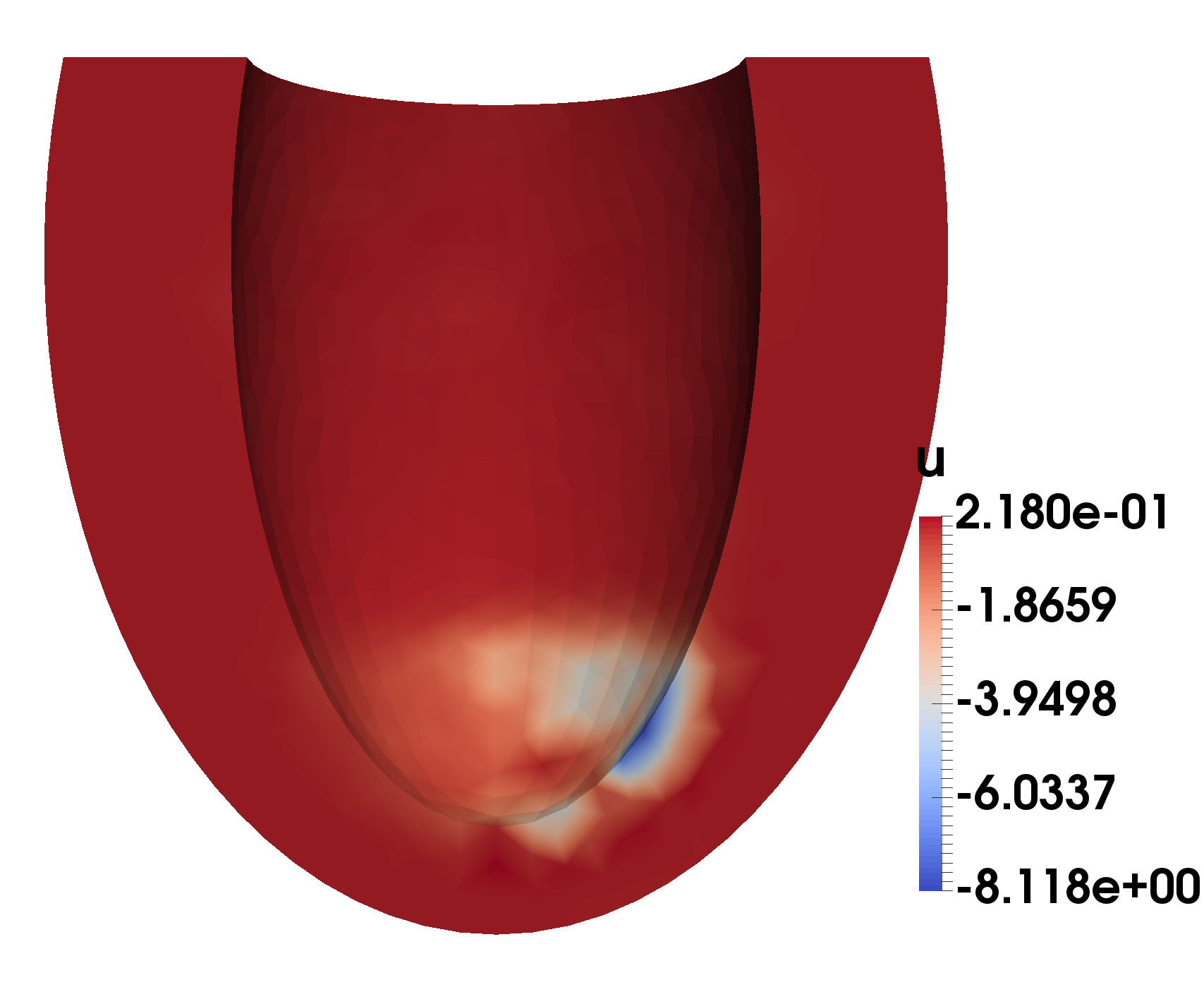}
			}
	\caption{Larger ischemic regions: stability of the reconstruction}
	\label{fig:bigisch2}\vspace{-0.1cm}
	\end{figure}

\section{Conclusions and perspectives}
A rigorous theoretical analysis of the inverse problem of detecting inhomogeneities in the monodomain equations has allowed us to set up a numerical reconstruction procedure, aiming at the detection of ischemic
regions in the myocardic tissue from a single measurement of the endocardial potential. The identification is made possible by evaluating the topological gradient  of a quadratic cost functional, requiring
the solution of two initial and boundary value problems, the background problem and the adjoint one. Numerical results are encouraging and allow to estimate the position of the inclusion,
although the identified inhomogeneity is nearly always detected on the boundary where the measurement is acquired. Nevertheless, provided a single measurement can be used for the sake of  identification,
and a {\em one-shot} procedure is performed, the obtained results give useful insights.

Many issues are still open. Concerning the mathematical model, an even more interesting case would be the one involving the heart-torso coupling is considered, so that more realistic (and noninvasive)
body surface measurements can be employed. Setting and analyzing the inverse problem in this context represents the natural continuation  of the present work.
To close the gap between the rigorous mathematical setting and the practice, the two assumptions made in this work about the size of the inclusion and its distance from the boundary should be relaxed.
Numerical results shown in Section \ref{sec:75}  provide a first insight on the detection of inclusions with larger size, as those corresponding to transmural or subendocardial ischemias.
From a mathematical standpoint, this problem is still open. Also in the case of a linear direct problem, very few results can be found in literature, see, e.g. \cite{Ammari2013}.
Estimating the size of the inclusion is another open question in the case of parabolic PDEs, also for linear equations. The case of multiple inclusions, addressed in \cite{BerettaManzoniRatti}
for a stationary nonlinear problem, could also be considered. Last, but not least, the topological optimization framework addressed in this paper could also be combined with an iterative algorithm,
such as the level set method, or with the solution of a successive shape optimization problem, to achieve a full reconstruction both of the dimension and the shape of the inclusion.

\section{Appendix - Proof of Proposition \ref{boundaryNorm}}
\setcounter{equation}{0}

\begin{proof}
Setting $Z(t) = \Phi(T-t), \, t \in (0,T)$, we get an equivalent problem to \eqref{IR1}
	\begin{equation}\label{IR1bis}
		\begin{cases}
			& Z_t - k_0\Delta Z + f^\prime(u)Z  = 0,\ \ \ {\rm in} \ \ \Omega\times (0,T),\\
			& \displaystyle\frac{\partial Z}{\partial n}  = u^\varepsilon - u, \ \ \  {\rm on } \ \ \partial\Omega \times (0,T), \\
			& Z(0) = 0, \ \ \ {\rm in} \ \ \Omega.
			\hskip2truecm
		\end{cases}
	\end{equation}
We will prove that $Z \in L^2(0,T;H^3(K)) \hookrightarrow L^1(0,T;W^{1,\infty}(K))$. To this aim we need to derive some energy estimates.
Multiplying the first equation in \eqref{IR1bis} by $Z$, an application of Young's inequality leads to
\begin{equation}\label{IR12bis}
\frac12 \frac{d}{dt}\|Z(t)\|^2_{L^2(\Omega)}  + \frac{k_0}{2}\|\nabla Z\|_{L^2(\Omega)}^2
\leq C (\|Z\|_{L^2(\Omega)}^2 + \|u^\varepsilon -u\|^2_{L^2(\partial\Omega)}),
\end{equation}
where $C= C(k_0, M_2, \Omega) >0$. An application of Gronwall's lemma gives
$$
\|Z(t)\|^2_{L^2(\Omega))} \leq C \|u^\varepsilon -u\|^2_{L^2(0,t; L^2(\partial\Omega))}, \quad \forall \, t \in [0,T],
$$
so that
\begin{equation}\label{IR13}
\|Z\|^2_{L^\infty(0,t; L^2(\Omega))} \leq C\|u^\varepsilon -u\|^2_{L^2(0,t; L^2(\partial\Omega))}, \quad \forall \, t \in [0,T].
\end{equation}
Instead, integrating \eqref{IR12bis} in time over $[0,t]$ we get
$$\int_0^t\|\nabla Z\|_{L^2(\Omega)}^2
\leq C\left ( \int_0^t\|Z\|_{L^2(\Omega)}^2 + \int_0^t\|u^\varepsilon -u\|^2_{L^2(\partial\Omega)}\right )$$
and finally
\begin{equation}\label{IR14}
\|\nabla Z\|_{L^2(0,t;L^2(\Omega))}^2
\leq C\|u^\varepsilon -u\|^2_{L^2(0,t;L^2(\partial\Omega))}, \quad \forall \, t \in [0,T],
\end{equation}
where $C$ is a positive constant depending on $k_0, M_2, \Omega, T$.
We remark that, by standard regularity results, $Z$ is smooth on $E \times [0,T]$, for any compact $E\subset \Omega$.

Consider now two compact sets $K_1$ and $K_2$ such that
$$K\subset K_2 \subset K_1 \subset \Omega, \quad d(k_0, \partial\Omega) \geq d_1 >0.$$
It is possible to construct two functions $\xi_1, \, \xi_2$ and two constants $b_1, \, b_2$ satisfying
$$\xi_i \in C^2(\overline\Omega), \quad 0\leq \xi_i \leq 1, \quad \xi_i(x) = 1 \quad \forall \, x \in K_i, \quad \xi_i(x) = 0
\quad \forall \,x \in B_i\quad i=1,2,$$
$$B_i = \{ x \in \Omega \, : \, d(x, \partial \Omega) \leq b_i\}, \quad 0 < b_1 < b_2 < d_1, \quad
K \subset\subset {\rm Supp}\, \xi_2 \subset\subset K_1 \subset {\rm Supp}\, \xi_1 \subset \Omega.$$
Let us multiply the first equation of \eqref{IR1} by $-\Delta Z$. Then the following holds
\begin{equation}\label{IR15}
\frac{d}{dt}\left (\frac12|\nabla Z|^2\right) + k_0(\Delta Z)^2 - f^\prime(u)Z\Delta Z = {\rm div}\,(Z_t \nabla Z).
\end{equation}
Multiplying \eqref{IR15} by $\xi_1$, integrating on $\Omega\times (0,T)$ and using the definitions of $Z$ we get
\begin{equation}\label{IR16}
\int_\Omega\left (\frac12|\nabla Z(T)|^2 \right)\xi_1 + k_0\int_0^T\int_\Omega(\Delta Z)^2\xi_1
= \int_0^T\int_\Omega f^\prime(u)Z\Delta Z \xi_1  - \int_0^T\int_\Omega Z_t \nabla Z \cdot \nabla\xi_1.
\end{equation}
Combining \eqref{IR16} and the first equation in \eqref{IR1bis}, applying Young's inequality and taking into account \eqref{M3} and the fact
that $0 \leq \xi\leq 1$, we obtain
$$
\int_\Omega\left (|\nabla Z(T)|^2 \right)\xi_1 + k_0\int_0^T\int_\Omega(\Delta Z)^2\xi_1
\leq 2M_2\int_0^T\int_\Omega Z^2- 2\int_0^T\int_\Omega (k_0\Delta Z - f^\prime(u)Z) \nabla Z \cdot \nabla\xi_1.
$$
Integrating by parts the term $\int_0^T\int_\Omega \Delta Z \nabla Z \cdot \nabla\xi_1$, we can easily deduce
\begin{equation}\label{IR18}
\int_\Omega\left (|\nabla  Z(T)|^2 \right)\xi_1 + \int_0^T\int_\Omega(\Delta  Z)^2\xi_1
\leq C\left (\| Z\|_{L^2(0,T;L^2(\Omega))}^2 + \|\nabla Z\|^2_{L^2(0,T;L^2(\Omega))} \right),
\end{equation}
where $C$ is a positive constant depending on $M_2$, $k_0$, $\xi_1$.
Hence, since $\xi_1 = 1$ in $k_0$, we get
\begin{equation}\label{IR19}
\|\Delta  Z \|^2_{L^2(0,T;L^2(K_1))} \leq C\left ( \| Z\|_{L^2(0,T;L^2(\Omega))}^2 + \|\nabla Z\|^2_{L^2(0,T;L^2(\Omega))} \right).
\end{equation}
Observe that, replacing $T$ by $t \in (0,T]$ in \eqref{IR18}, we deduce also
\begin{equation}\label{IR20}
\|\nabla  Z\|_{L^\infty(0,T;L^2(K_1))} \leq C\left (\| Z\|_{L^2(0,T;L^2(\Omega))}^2 + \|\nabla Z\|^2_{L^2(0,T;L^2(\Omega))} \right).
\end{equation}
Combining \eqref{IR13}, \eqref{IR19} and \eqref{IR20}, we obtain
\begin{equation}\label{IR21}
\|  Z \|^2_{L^2(0,T;H^2(K_1))} \leq C \|u^\varepsilon - u\|_{L^2(0,T;L^2(\partial\Omega))}^2,
\end{equation}
where $C$ is a positive constant depending on $k_0, M_2, \Omega, T, \xi_1$.

On account of the first equation in \eqref{IR1bis} and the previous estimates, we get
\begin{equation}\label{IR22}
\| Z_t\|_{L^2(0,T;L^2(K_1))}^2  \leq C (\| Z\|_{L^2(0,T;L^2(\Omega))}^2
+ \|\nabla  Z\|_{L^2(0,T;L^2(\Omega))}^2) \leq  C \|u^\varepsilon - u\|_{L^2(0,T;L^2(\partial\Omega))}^2,
\end{equation}
where $C$ is a positive constant depending on $k_0, M_2, \Omega, T, \xi_1$.

Now, let us multiply the first equation of \eqref{IR1bis} by  $-\Delta  Z_t$. We obtain
$$
-  Z_t \Delta  Z_t + \frac{k_0}{2}\frac{d}{dt}(\Delta Z)^2 - f^\prime(u) Z\Delta  Z_t = 0.
$$
Multiplying the previous equation by $\xi_2$ and integrating on $\Omega \times (0,T)$, then a
suitable integration by parts in space implies
$$
\int_0^T\int_\Omega|\nabla Z_t|^2\xi_2 + \frac{k_0}{2}\int_0^T\int_\Omega\frac{d}{dt}(\Delta Z)^2\xi_2
+ \int_0^T\int_\Omega\xi_2 Z f^{\prime\prime}(u)\nabla u \cdot\nabla Z_t + \int_0^T\int_\Omega \xi_2f^\prime(u)\nabla  Z\cdot\nabla Z_t
$$
$$
= \int_0^T\int_\Omega {\rm div}\,\left ( \frac12\nabla(( Z_t)^2)\right ) \xi_2
+ \int_0^T\int_\Omega  {\rm div}\, \Big (\nabla(f^\prime(u) Z  Z_t) -  Z_t\nabla(f^\prime(u) Z)\Big)\xi_2.
$$
Integrating by parts the second term of the left-hand side and by parts in space the terms in the right-hand side, by an application of Young's
inequality we finally get
$$
\int_0^T\int_{K_2}|\nabla Z_t|^2 \leq \int_0^T\int_\Omega|\nabla Z_t|^2\xi_2 \leq
C\left ( \int_0^T\int_\Omega| Z|^2  + \int_0^T\int_\Omega  |\nabla Z|^2 + \int_0^T\int_{K_1} ( Z_t)^2\right ),
$$
where the constant $C>0$ depends on $\xi_2, M_2$.
A combination with \eqref{IR13}, \eqref{IR14}, \eqref{IR22} gives
$$
\|\nabla Z_t\|_{L^2(0,T;L^2(K_2))}^2 \leq
C \|u^\varepsilon - u \|_{L^2(0,T;L^2(\partial\Omega))}^2,
$$
where the constant $C>0$ depends on $k_0, M_2, \Omega, T, \xi_1, \xi_2$.
In order to prove the desired regularity, we need to take into account also the third-order derivatives, in particular the operator $\nabla \Delta  Z$.
Observe that from the first equation in \eqref{IR1bis} we get
\begin{equation}
	\nabla\Delta  Z = \frac{1}{k_0}\left ( \nabla Z_t +  Z f^{\prime\prime}(u)\nabla u + f^\prime(u)\nabla  Z \right ).
\label{thirdOrd}
\end{equation}
Hence, we can conclude
$$
\|\nabla\Delta Z\|_{L^2(0,T;L^2(K_2))}^2 \leq
C \|u^\varepsilon - u \|_{L^2(0,T;L^2(\partial\Omega))}^2,
$$
where $C$ is a positive constant depending on $k_0, \frac{1}{k_0}, M_2, \Omega, T, \xi_1, \xi_2$.

Recalling \eqref{IR21} and the fact that $K \subset K_2 \subset K_1$, standard regularity results imply
\begin{equation}\label{IR23}
\| Z\|_{L^2(0,T;H^3(K))}^2 \leq
C \|u^\varepsilon - u \|_{L^2(0,T;L^2(\partial\Omega))}^2.
\end{equation}
Finally, from \eqref{IR21} and \eqref{IR23}, by Sobolev immersion theorems, we get
\begin{equation}\label{IR24}
\| Z\|_{L^1(0,T;W^{1,\infty}(K))}^2  \leq C(T) \| Z\|_{L^2(0,T;W^{1,\infty}(K))}^2 \leq
C \|u^\varepsilon - u \|_{L^2(0,T;L^2(\partial\Omega))}^2,
\end{equation}
where $C$ is a positive constant depending on $k_0, \frac{1}{k_0}, M_2, \Omega, T, \xi_1, \xi_2$.

Recalling the relation between $Z$ and $\Phi$ we get \eqref{eq:infEst}.
\end{proof}


\section*{Acknowledgments}
E. Beretta, C. Cavaterra, M.C. Cerutti and L. Ratti thank the New York University in Abu Dhabi
for its kind hospitality that permitted a further development of the present research.
The work of C. Cavaterra was supported by the FP7-IDEAS-ERC-StG 256872 (EntroPhase)
and by GNAMPA (Gruppo Nazionale per l'Analisi Matematica, la Probabilit\`a
e le loro Applicazioni).




\end{document}